\documentclass[11pt]{article}
\usepackage{amsmath,amsthm,mathrsfs,bm}

\usepackage[initials,alphabetic]{amsrefs}
\usepackage[utf8]{inputenc}

\usepackage{amsfonts,amstext,amssymb,verbatim,epsfig,psfrag,stmaryrd,extarrows}
\usepackage{pgf,tikz}
\usepackage{float}
\usetikzlibrary{arrows}
\usepackage{hyperref}
\usepackage{color}
\usepackage{transparent}
\usepackage{subfigure}
\usepackage{prettyref}
\newrefformat{pr}{Proposition \ref{#1}}
\newrefformat{co}{Corollary \ref{#1}}
\newrefformat{le}{Lemma \ref{#1}}
\newrefformat{th}{Theorem \ref{#1}}
\newrefformat{ex}{Example \ref{#1}}
\newrefformat{re}{Remark \ref{#1}}
\newrefformat{se}{Section \ref{#1}}

\usepackage{esint}

\usepackage{bbm,mathrsfs}

\usepackage{xcolor}
\usepackage[normalem]{ulem}

\hypersetup{colorlinks=true,pdfborder=000,  citecolor  = blue, citebordercolor= {magenta}}
\hypersetup{linkcolor=purple}
\makeatletter
\let\reftagform@=\tagform@
\def\tagform@#1{\maketag@@@{(\ignorespaces\textcolor{purple}{#1}\unskip\@@italiccorr)}}
\renewcommand{\eqref}[1]{\textup{\reftagform@{\ref{#1}}}}
\makeatother

\makeatletter

\sloppy 

\textheight=225mm \textwidth=167mm
\topmargin=-1.3cm
\oddsidemargin=-0.04cm
\evensidemargin=-0.0cm
\allowdisplaybreaks

\DeclareUrlCommand\ULurl@@{%
  \def\UrlLeft{\uline\bgroup}%
  \def\UrlRight{\egroup}}
\def\ULurl@#1{\hyper@linkurl{\ULurl@@{#1}}{#1}}
\DeclareRobustCommand*\ULurl{\hyper@normalise\ULurl@}
\makeatother




\def\lessim{\ \lower4pt\hbox{$
		\buildrel{\displaystyle <}\over\sim$}\ }
\def\gessim{\ \lower4pt\hbox{$\buildrel{\displaystyle >}
		\over\sim$}\ }

\def\si{\sigma}

\def\eps{{\varepsilon}}

\newcommand{\indi}{\ensuremath{\boldsymbol 1}}

\newcommand{\bt}{\boldsymbol{t}}

\newcommand{\pref}{\prettyref}

\newtheorem{lemma}{\bf Lemma}[section]

\newtheorem{theorem}[lemma]{\bf Theorem}

\newtheorem{example}{\bf Example}

\newtheorem{proposition}[lemma]{\bf Proposition}

\theoremstyle{remark}

\newtheorem{remark}{Remark}[section]

\numberwithin{equation}{section}

\newcommand{\8}{\infty}


\newcommand{\ex}{\mathcal{E}}

\newcommand{\px}{\mathcal{P}}

\newcommand{\nz}{\mathbb{N}}
\newcommand{\rz}{\mathbb{R}}

\newcommand{\ez}{\mathbb{E}}

\newcommand{\pz}{\mathbb{P}}

\newcommand{\Ga}{\Gamma}

\newcommand{\sfT}{\mathsf T}

\newcommand{\sfa}{\mathsf a}
\newcommand{\sfb}{\mathsf b}

\newcommand{\al}{\alpha}
\newcommand{\de}{\delta}
\renewcommand{\si}{\sigma}
\newcommand{\ga}{\gamma}
\newcommand{\la}{\lambda}
\renewcommand{\bt}{\beta}

\newcommand{\Crt}{\mathrm{Crt}}
\newcommand{\Var}{\mathrm{Var}}
\newcommand{\Cov}{\mathrm{Cov}}
\newcommand{\GOE}{\mathrm{GOE}}
\newcommand{\sgn}{\mathrm{sgn}}

\newcommand{\dd}{\mathrm{d}}

\newenvironment{Proof of lemma}{\noindent{\bf Proof of Lemma}}{\hfill$\Box$\newline}
\newenvironment{Proof of theorem}{\noindent{\bf Proof of Theorem}}{\hfill{\footnotesize${\square}$}\newline}
\newenvironment{Proof of theorems}{\noindent{\bf Proof of Theorems}}{\hfill$\Box$\newline}
\newenvironment{Proof of proposition}{\noindent{\bf Proof of Proposition}}{\hfill$\Box$\newline}
\newenvironment{Proof of propositions}{\noindent{\bf Proof of Propositions}}{\hfill$\Box$\newline}
\newenvironment{Proof of exercise}{\noindent{\it Proof of Exercise:}}{\hfill$\Box$}
\begin{document}
\title{Complexity of Gaussian random fields with isotropic increments}
\author{Antonio Auffinger \thanks{Department of Mathematics, Northwestern University, tuca@northwestern.edu, research partially supported by NSF Grant CAREER DMS-1653552 and NSF Grant DMS-1517894.} \\
	\small{Northwestern University}\and Qiang Zeng \thanks{Department of Mathematics, University of Macau, qzeng.math@gmail.com, research partially supported by SRG 2020-00029-FST and FDCT 0132/2020/A3.}\\
	\small{University of Macau}}
\date{\today}

\maketitle
\abstract{We study the energy landscape of a model of a single particle on a random potential, that is, we investigate the topology of level sets of smooth random fields on $\mathbb R^{N}$ of the form $X_N(x) +\frac\mu2 \|x\|^2,$ where $X_{N}$ is a Gaussian process with isotropic increments. We derive asymptotic formulas for the mean number of critical points with critical values in an open set as the dimension $N$ goes to infinity. In a companion paper, we provide the same analysis for the number of critical points with a given index.}

\section{Introduction}
In this paper we provide asymptotics for the number of critical points of Gaussian random fields with isotropic increments (a.k.a.~locally isotropic Gaussian random fields) in the high dimensional limit. The definition of locally isotropic fields was first formulated by Kolmogorov about 80 years ago \cite{Ko41} for the application in statistical theory of turbulence; see \cite{Ya57} for an account of background and early history.

The model is defined as follows. Let $B_N \subset \mathbb R^N$ be a sequence of subsets and  let $H_N : B_N \subset \mathbb R^N \to \mathbb R$ be given by
\begin{align}\label{hamil}
  H_N(x) = X_N(x) +\frac\mu2 \|x\|^2,
\end{align}
where $\mu \in \rz$, $\|x\|$ is the Euclidean norm of $x$, and $X_N$ is a Gaussian random field that satisfies
\begin{align*}
  \ez[(X_N(x)-X_N(y))^2]=N D\Big(\frac1N \|x-y\|^2\Big), \ \ x,y\in \rz^N.
\end{align*}
Here the function $D:\mathbb R_+ \to \mathbb R_+$ is called the correlator (or structure) function and $\rz_+=[0,\8)$. It determines the law of $X_N$ up to an additive shift by a Gaussian random variable. Complete characterization of all correlators was given in the work of  Yaglom \cite{Ya57} (see also the general form of a positive definite kernel due to Schoenberg \cite{Sch38}). In short, if $D$ is the correlator function for all $N\in \nz$, then $X_N$ must belong to one of the following two classes (see also \cite{Kli12}*{Theorem A.1}):
\begin{enumerate}
  \item \textbf{Isotropic fields.} There exists a function $B:\rz_+\to \rz$ such that
      \begin{align*}
        \ez[X_N(x)X_N(y)]=N B\Big(\frac1N\|x-y\|^2\Big)
      \end{align*}
      where $B$ has the representation
      \begin{align*}
      B(r) = c_0+\int_{(0,\8)} e^{-r t^2} \nu(\dd t),
      \end{align*}
      and $c_0\in \rz_+$ is a constant and $\nu$ is a finite measure on $(0,\8)$. In this case,
      \begin{align*}
        D(r)=2(B(0)-B(r)).
      \end{align*}
  \item \textbf{Non-isotropic field with isotropic increments.} The correlator $D$ can be written as
      \begin{align}\label{eq:drep}
        D(r) = \int_{(0,\8)} (1-e^{-rt^2})\nu(\dd t) + Ar, \ \ r\in \rz_+,
      \end{align}
      where $A\in \rz_+$ is a constant and $\nu$ is a $\si$-finite measure with
      \begin{align*}
        \int_{(0,\8)} \frac{t^2}{1+t^2}\nu(\dd t) <\8.
      \end{align*}
\end{enumerate}
See \cite{Ya87}*{Section 25.3} for more details on locally isotropic fields. Case 1 is known as short-range correlation (SRC)  processes and case 2 as long-range correlation (LRC) in the physics literature.

Here is a special example of $B(r)$ and $D(r)$, which we learned from Yan Fyodorov.

\begin{example}\rm
We  assume $c_0=0$ and $A=0$. For fixed $\eps>0$ and $\ga>0$, let
\begin{align*}
  \nu(\dd x) = 2 e^{-\eps x^2}x^{2\ga-3} \dd x.
\end{align*}
The case $\ga>1$ corresponds to SRC while the case $0<\ga\le1$ is LRC field. Indeed, if $\ga>1$,
\begin{align*}
  B(r)&=\int_0^\8 2e^{-r t^2}e^{-\eps t^2} t^{2\ga-3} \dd t = \frac{\Ga(\ga-1)}{(r+\eps)^{\ga-1}},
\end{align*}
while if $0< \ga<1$, using integration by parts,
\begin{align*}
  D(r)=\int_0^\8(e^{-\eps y}- e^{-(r+\eps)y})y^{\ga-2} \dd y = \frac{\Ga(\ga)}{1-\ga}[(r+\eps)^{1-\ga}-\eps^{1-\ga}].
\end{align*}
The case $\ga=1$ can be obtained by sending $\ga\uparrow 1$ and using the dominated convergence theorem with the control function $f(y)=(e^{-\eps y}- e^{-(r+\eps)y})y^{-1}$ for $y\le1$ and $=(e^{-\eps y}-e^{-(r+\eps)y})y^{-1/2}$ for $y>1$. Then if $\ga=1$, we have
\begin{align*}
  D(r)=\log(1+r/\eps).
\end{align*}
In the LRC case, we see that the long range covariance behaves like a high dimensional analog of fractional Brownian motions. 
\end{example}

\begin{remark}
Observe that any Bernstein function vanishing at 0 is a structure function. This is a consequence of the L\'evy--Khintchine representation of Bernstein functions; see e.g.~the monograph \cite{SSV}, which also contains a comprehensive list of complete Bernstein functions. Conversely, any structure function is a Bernstein function. It follows that any correlator function $D$ must be concave, infinitely differentiable, and non-decreasing on $(0,+\8)$. Moreover, we have $D'(r)\ge 0, D''(r)\le 0, D'''(r)\ge 0$ for $r>0$.
\end{remark}

\begin{remark}
One should not confuse SRC/LRC with short-range/long-range dependence. SRC here refers to the fact that $\ez(X_N(x)X_N(y))$ decays as $\|x-y\|\to \8$ while for LRC it may not. Short-range dependence requires the autocovariance function to have exponential decay.
\end{remark}

\subsection{Previous results}

The Hamiltonian \eqref{hamil} has been considered in many papers, from physics to mathematics, since 1950s. In particular, the model was introduced by Mezard--Parisi \cite{MP91} and Engel \cite{En93} among others as a model for a classical particle confined to an impenetrable spherical box or a toy model describing elastic manifolds propagating in a random potential \cite{Fy04}. A nice historical account can be found in \cite{FS07} which also contains the phase diagram ($T-\mu$ relation) for the model at positive temperature. At zero temperature, in the seminal paper \cite{Fy04}, Fyodorov considered the case of isotropic fields (SRC) and computed the mean total number of critical points, finding a phase transition for different values of $\mu$ and $D''(0)$. In a subsequent and impressive paper,  \cite{FW07} computes the mean number of saddles and minima for SRC fields. This paper considered a more general model where $\mu\|u\|_2^2/2$ is replaced by $NU(\|u\|_2^2/N)$ for suitable confining potential $U$.

Still in the case of isotropic fields, \cite{FN12} computed the mean number of minima and studied the phenomena of topology trivialization and the relation of this quantity with the Tracy--Widom distribution. More recently, \cite{CS18} considered the mean number of critical points of a fixed index and for finite $N$. The reader is also invited to take a look at  \cites{BD07, YV18, Kli12}.

For a similar Hamiltonian defined on the $N$ dimensional sphere, known as the spherical $p$-spin model, the rigorous study of the complexity of saddles and minima started in \cite{ABC13} and now has solid body of work including \cites{ABA13, subag2017complexity, Mihai}. For the physics predictions of this model, the reader should consult \cites{CL04, MPV} and the references therein.

 All of the rigorous work above only considered isotropic Gaussian fields (SRC case) or spherical spin glasses. In the physics literature, the complexity of LRC Gaussian fields was studied in a sequence of two remarkable papers \cites{FS07, FB08}. However, the lack of symmetry in this model imposes a difficult obstacle and no rigorous results on the complexity are currently known. 

The main purpose of this article and its companion paper is to close this gap by providing a comprehensive rigorous study of the complexity of LRC Gaussian fields. We extend and recover the predictions made by Fyodorov, Bochaud and Sommers \cites{FS07, FB08}. In this first paper, we focus on the high dimensional limit of the expected number of critical points for when the domain and value of the fields are constrained to any particular set. In the companion paper \cite{AZ22}, we will provide information on local minima and saddles with given indices.

A  word of comment is needed here. One of the main differences between LRC and SRC fields is the fact that the variance of an LRC field may change from location and the gradient $\nabla H_{N}$ is no longer independent of $H_{N}$. The main novelty of our two papers is the development of techniques to overcome this difficulty. Another set of important techniques to deal with ``non-invariant'' fields  was also recently developed in \cites{BBMsd, BBMsd2}; these do not seem to apply to the model we consider. 

\subsection{Main results}
To state our results, let $B_N\subset \rz^N$ and $E\subset \rz$ be (a sequence of) Borel sets. We define
\begin{align*}
	\mathrm{Crt}_N(E,B_N) &= \#\{x\in B_N: \nabla H_N(x)=0, \frac1N H_N(x)\in E \}.
\end{align*}
Throughout the paper we will consider the following extra assumptions on $X_N$.

\textbf{Assumption I} (Smoothness). The function $D$ is four times differentiable at $0$ and it satisfies
\begin{align}\label{eq:asmp}
	0<|D^{(4)}(0)|<\8.
\end{align}

\begin{remark}
    By Kolmogorov's criterion, Assumption I ensures that almost surely the field $H_{N}$ is twice differentiable. Moreover, Assumption I guarantees $D'(0), D''(0)$ and $D'''(0)$ exist and are non-zero. This implies that for $ r>0$
\begin{align*}
  D(r)>0, \ \ D'(r)>0,\ \ D''(r)<0,\ \ D'''(r)> 0,
\end{align*}
and in particular all these functions are strictly monotone. From here we also know that $D(r)\le D'(0) r$ and when $\nu$ in the representation \pref{eq:drep} is not a finite measure (or equivalently in case 2), $\lim_{r\to \8} D(r)=\8$.
\end{remark}

\textbf{Assumption II} (Pinning). We have $$X_N (0) = 0.$$

\begin{remark}
Random fields with isotropic increments are high dimensional generalizations of stochastic processes with stationary increments in dimension one. It is a common practice to assume such processes (like Brownian motion or Poisson processes) to start from 0. Therefore, Assumption II is a natural choice for studying random fields with isotropic increments. Note that only the trivial isotropic field ($X_{N}=0$) satisfies Assumption II. 
\end{remark}

We first consider the average of the {\it total} number of critical points of $H_N$. Then we count the average number of critical points  of $H_N$ with a given fixed critical value. Although the first result can be essentially obtained from the second, the formula and proof for the first are clearer, thus we state them separately. We hope this organization provides a gentle introduction to the reader to appreciate the second result, where most of the novelty (and difficulty of the paper) resides.

The following condition is only needed when the critical value is not restricted.

\textbf{Assumption III} (Domain growth). Let $z_N$ be a standard $N$ dimensional Gaussian random variable. There exist $\Xi$ or $\Theta$ such that the sequence of sets $B_N$ satisfies
\begin{align}
\lim_{N\to \8} \frac1N \log \pz(  z_N \in  |\mu| B_N/\sqrt{D'(0)} ) &= -\Xi\le 0, &\mu\neq 0,\label{eq:bnasp1}\\
\lim_{N\to \8} \frac1N \log |B_N|& = \Theta, &\mu=0.\label{eq:bnasp2}
\end{align}

\begin{remark}
Assumption III serves to select domains in the right scale and it is less restrictive. As seen in the proof of our main theorems, the reader could consider other sequence of sets $B_{N}$ provided some knowledge of their volumes. %
\end{remark}

\begin{theorem}
\label{th:ttcpx}
Under Assumptions I, II, and III, we have
\begin{align*}
\lim_{N\to\8}&\frac1N\log\ez \Crt_N(\rz,B_N)\\
&=
\begin{cases}
-\Xi, & |\mu|>\sqrt{-2D''(0)},\\
-\log\frac{|\mu|}{\sqrt{-2D''(0)}}+\frac{\mu^2}{-4D''(0)} -\frac12-\Xi, & 0<|\mu|\le \sqrt{-2D''(0)},\\
\log\sqrt{-2D''(0)}-\frac12-\frac12\log(2\pi)- \frac12\log[D'(0)] +\Theta, & \mu=0.
\end{cases}
\end{align*}
\end{theorem}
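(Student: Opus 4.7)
The plan is to apply the Kac--Rice formula and reduce everything to a determinant computation for a GOE-like matrix with a common identity shift. Since $H_N$ is almost surely $C^2$ under Assumption I,
\begin{align*}
\ez\Crt_N(\rz,B_N)=\int_{B_N}\ez\bigl[|\det\nabla^2 H_N(x)|\bigm|\nabla H_N(x)=0\bigr]\,\phi_{\nabla H_N(x)}(0)\,dx,
\end{align*}
where $\phi_{\nabla H_N(x)}$ is the gradient density. Differentiating the covariance $\ez[X_N(x)X_N(y)]=\tfrac{N}{2}[D(\|x\|^2/N)+D(\|y\|^2/N)-D(\|x-y\|^2/N)]$ gives $\nabla H_N(x)\sim\mathcal{N}(\mu x,D'(0)I_N)$; the mixed triple derivative $\partial_{x_i}\partial_{y_j}\partial_{y_k}$ vanishes at $y=x$ (every surviving term carries a factor $x-y$), so by joint Gaussianity $\nabla^2 H_N(x)$ is independent of $\nabla H_N(x)$; and the fourth derivative yields
\begin{align*}
\ez[(\nabla^2 X_N)_{ij}(\nabla^2 X_N)_{kl}]=-\tfrac{2D''(0)}{N}[\delta_{ij}\delta_{kl}+\delta_{ik}\delta_{jl}+\delta_{il}\delta_{jk}].
\end{align*}
This identifies $\nabla^2 X_N(x)\stackrel{d}{=}\sqrt{-2D''(0)}\,(\tilde G_N+z\,N^{-1/2}\,I)$ with $\tilde G_N$ a GOE of off-diagonal variance $1/N$ (limiting spectrum on $[-2,2]$) and $z\sim\mathcal{N}(0,1)$ independent of $\tilde G_N$. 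Crucially, this law does not depend on $x$.

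Because the determinant is independent of $\nabla H_N$ and of $x$, Kac--Rice splits into the product of $\ez|\det\nabla^2 H_N(x)|$ with $\int_{B_N}\phi_{\nabla H_N(x)}(0)\,dx$. For $\mu\ne0$, the substitution $y=\mu x/\sqrt{D'(0)}$ produces $|\mu|^{-N}\pz(z_N\in|\mu|B_N/\sqrt{D'(0)})$, whose $\tfrac{1}{N}\log$ converges to $-\log|\mu|-\Xi$ by Assumption III; for $\mu=0$ the density is constant in $x$ and the integral equals $|B_N|(2\pi D'(0))^{-N/2}$, with $\tfrac{1}{N}\log$ converging to $\Theta-\tfrac12\log(2\pi D'(0))$.

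The heart of the argument is
\begin{align*}
\tfrac{1}{N}\log\ez|\det\nabla^2 H_N(x)|\longrightarrow\tfrac12\log(-2D''(0))+S(\tilde\mu),\qquad \tilde\mu:=\mu/\sqrt{-2D''(0)}.
\end{align*}
Factoring out $\sqrt{-2D''(0)}$ from the decomposition gives
\begin{align*}
\ez|\det\nabla^2 H_N(x)|=(-2D''(0))^{N/2}\,\ez_z\,\ez_{\tilde G_N}\bigl|\det\bigl(\tilde G_N+(zN^{-1/2}+\tilde\mu)I\bigr)\bigr|.
\end{align*}
The classical asymptotics of the expected GOE characteristic polynomial give $\tfrac{1}{N}\log\ez_{\tilde G_N}|\det(\tilde G_N-yI)|\to L(y):=\int\log|y-\lambda|\,\semi(d\lambda)$, equal to $y^2/4-1/2$ for $|y|\le2$ and having an explicit outer expression for $|y|>2$. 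Substituting $u=z/\sqrt N$ in the $z$-integral turns it into the Laplace integral $\sqrt{N/(2\pi)}\int du\,\exp\bigl(N[L(\tilde\mu+u)-u^2/2]\bigr)$, whose exponential rate is $S(\tilde\mu)=\sup_y[L(y)-(y-\tilde\mu)^2/2]$. Strict concavity of $L(y)-(y-\tilde\mu)^2/2$ guarantees a unique maximizer $y^*$ solving $L'(y)=y-\tilde\mu$: one finds $y^*=2\tilde\mu$ when $|\tilde\mu|\le1$, giving $S(\tilde\mu)=\tilde\mu^2/2-1/2$, and $y^*=\tilde\mu+1/\tilde\mu>2$ when $|\tilde\mu|\ge1$, giving $S(\tilde\mu)=\log|\tilde\mu|$ after simplifying the outer formula. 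Combining with the gradient-density rate recovers all three cases: for $|\tilde\mu|\ge1$ the $\log|\tilde\mu|$ cancels against $-\log|\mu|+\log\sqrt{-2D''(0)}$, leaving $-\Xi$; for $|\tilde\mu|<1$ one obtains $-\log(|\mu|/\sqrt{-2D''(0)})+\mu^2/(-4D''(0))-1/2-\Xi$; and $\mu=0$ follows from $S(0)=-1/2$.

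The hardest point will be making this Laplace analysis rigorous: uniform convergence of $\tfrac{1}{N}\log\ez_{\tilde G_N}|\det(\tilde G_N-yI)|$ on compact $y$-intervals, together with a tail estimate for $|z|\gtrsim\sqrt N$ that uses the Gaussian factor $e^{-z^2/2}$ to beat the exponential growth of the expected determinant outside the bulk. The independence of Hessian and gradient also needs care since the LRC covariance is not translation invariant in the field itself; one must check that each of the three pieces $D(\|x\|^2/N)$, $D(\|y\|^2/N)$, $D(\|x-y\|^2/N)$ contributes zero to the cubic derivative at $y=x$, the first two because $\partial_x$ respectively $\partial_y^2$ kills them, and the third because every surviving term carries a factor $(x-y)$ that vanishes on the diagonal.
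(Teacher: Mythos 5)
Your skeleton is the same as the paper's: Kac--Rice, independence of $\nabla H_N$ and $\nabla^2 H_N$ (your vanishing-third-derivative computation is exactly the content of \pref{le:cov}), identification of the Hessian as a GOE plus an independent scalar Gaussian shift plus $\mu I$ (your normalization with spectrum on $[-2,2]$ versus the paper's $[-\sqrt2,\sqrt2]$ is cosmetic), and the gradient-density integral handled exactly as in the paper via Assumption III. Your final variational formula $S(\tilde\mu)=\sup_y[L(y)-(y-\tilde\mu)^2/2]$ is an equivalent reparametrization of the paper's answer; I checked all three regimes and the constants match.

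Where you genuinely diverge is the treatment of $\ez|\det\nabla^2 H_N(x)|$, and this is where the one real gap sits. The paper uses the exact identity of \cite{ABC13}*{Lemma 3.3}, which absorbs both the determinant and the independent scalar Gaussian into an $(N+1)\times(N+1)$ GOE, giving $\ez|\det P|=c_N\,\ez\int e^{(N+1)\phi(x)}L_{N+1}(\dd x)$ with $\phi$ bounded above. That is a one-point linear statistic of the spectrum, so the asymptotics follow from Varadhan's lemma with the LDP of the smallest eigenvalue (supercritical case) or from Wigner's law (subcritical case); no product over $N$ eigenvalues ever has to be controlled, and the only residual technicality is replacing $\sqrt{N(N+1)}$ by $N+1$ in the exponent (\pref{le:repl}). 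Your route keeps the $N$-fold product and rests on the claim that $\frac1N\log\ez_{\tilde G_N}|\det(\tilde G_N-yI)|\to L(y)$ uniformly on compacts, including $y$ in the bulk. That claim is true, but it is the entire technical content of the step and you assert it rather than prove it: the upper bound cannot be read off the empirical-measure LDP because $\nu\mapsto\int\log|y-\la|\,\nu(\dd\la)$ is neither bounded nor weakly continuous (one needs to truncate $\log$ near $y$ and at infinity and control $\la_N^*$ separately), and the lower bound must show that the eigenvalue inevitably sitting at distance $O(1/N)$ from a bulk point $y$ costs only a subexponential factor. This is precisely the kind of work the paper spends Sections 4 and 5 on when an analogous bulk product genuinely cannot be avoided. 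So the plan is sound and lands on the correct answer, but to complete it you must either prove the uniform characteristic-polynomial asymptotics together with the $|z|\gtrsim\sqrt N$ tail estimate you mention, or switch to the $(N+1)$-GOE identity as the paper does.
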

\begin{remark}
If we let $J=\sqrt{-2D''(0)}$ and $\Xi=0$ as in \cite{Fy04}, the second case can be rewritten as
\begin{align}
  \Sigma_{\mu,D} = \frac12\Big(\frac{\mu^2}{J^2}-1\Big)-\log\frac{\mu}J \ge 0.
\end{align}
which matches Fyodorov's result for isotropic Gaussian random fields.
\end{remark}

Next, we state our main result on the number of critical points with values in an open set $E\subset \mathbb R$ and confined to a shell $B_{N}(R_{1},R_{2})=\{ x\in \mathbb R^{N}: R_{1}< \frac{\| x\|}{\sqrt N} < R_{2} \}$. This is a natural choice, as the isotropy assumption implies rotational invariance. To emphasize the dependence on $R_1$ and $R_2$, we also write
\begin{align*}
  \Crt_{N}(E, (R_1,R_2)) & =\Crt_{N}(E, B_{N}(R_1,R_2)).
\end{align*}
We will assume the following technical assumption:

\textbf{Assumption IV}. \pref{eq:asmp1} and \pref{eq:asmp2} hold for $x\in\rz^N\setminus\{0\}$.

This assumption is rather mild, and is satisfied by e.g.~the so called Thorin--Bernstein functions; see \pref{se:3} for more details.

\begin{theorem}\label{th:cpsublevel}
Let $0\le R_1<R _2\le \8$ and $E$ be an open set of $\rz$. Assume Assumptions I, II, IV, and $|\mu|+\frac1{R_2} > 0$.  Then
 	\[
 	\lim_{N\to\8} \frac1N \log\ez \Crt_{N}(E, B_{N}(R_1,R_2)) = \frac12 \log[-4D''(0)] -\frac12\log D'(0) +\frac12+\sup_{(\rho,u,y)\in F}\psi_*(\rho,u,y)
 	\]
 	where  $F=\{(\rho,u,y):y\in\rz, \rho \in (R_1, R_2), u\in    E\}$, and the function $\psi_*$ is given explicitly in \eqref{eq:psifunction}.
 \end{theorem}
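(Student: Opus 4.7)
The natural starting point is the Kac--Rice formula: under Assumption I the field $H_N$ is almost surely $C^2$, and under Assumption IV the density of $\nabla H_N(x)$ is non-degenerate for $x\neq 0$, so
\begin{equation*}
\ez\,\Crt_N(E,B_N(R_1,R_2)) = \int_{B_N(R_1,R_2)} \ez\!\bigl[|\det \nabla^2 H_N(x)|\,\indi\{H_N(x)/N\in E\}\bigm|\nabla H_N(x)=0\bigr]\,\varphi_x(0)\,\dd x,
\end{equation*}
where $\varphi_x$ is the density of $\nabla H_N(x)$. Because $X_N$ has rotationally invariant increments about the origin, the integrand depends on $x$ only through $\rho:=\|x\|/\sqrt N$; switching to polar coordinates and applying Stirling reduces the problem to an effective one-dimensional integral over $\rho\in(R_1,R_2)$, with the spherical volume contributing $N(\tfrac12\log(2\pi e)+\log\rho)+o(N)$ to the exponent.

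For a representative point $x_\rho=\sqrt N\,\rho\,e_1$ I would compute the joint Gaussian law of $(H_N(x_\rho),\nabla H_N(x_\rho),\nabla^2 H_N(x_\rho))$ by differentiating
\begin{equation*}
\ez[X_N(x)X_N(y)] = \tfrac{N}{2}\bigl[D(\|x\|^2/N)+D(\|y\|^2/N)-D(\|x-y\|^2/N)\bigr]
\end{equation*}
(valid by Assumption II) on the diagonal: every entry reduces to $D,D',D''$ evaluated at $0$ or $\rho^2$. By rotational symmetry, the gradient splits into a radial component correlated with $H_N(x_\rho)$ and $N-1$ i.i.d.\ tangential components. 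Conditioning on $\nabla H_N(x_\rho)=0$ leaves $H_N(x_\rho)/N$ Gaussian with $\rho$-dependent mean and variance, so integrating $\indi\{H_N(x_\rho)/N\in E\}$ against that density introduces the variable $u\in E$ into the final supremum.

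The more delicate step is the conditioned Hessian. I expect it to decompose as
\begin{equation*}
\nabla^2 H_N(x_\rho)\mid\nabla H_N(x_\rho)=0 \ \stackrel{d}{=}\  \mu I + a(\rho)\,G_N + b(\rho)\,Y\,I + (\text{deterministic shift in }\rho,u,\mu),
\end{equation*}
where $G_N$ is a GOE matrix and $Y$ an independent standard Gaussian, with coefficients $a(\rho),b(\rho)$ built from $D',D''$ at $0$ and $\rho^2$. The scalar $Y$ is the component of $\mathrm{tr}\,\nabla^2 H_N(x_\rho)$ remaining after removing its projections onto $H_N$ and its gradient, and the variable $y\in\rz$ in the statement is precisely the integration variable for its density. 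Conditional on $Y=y$ and the resulting effective shift $t=t(\rho,u,y)$, the classical asymptotic
\begin{equation*}
\tfrac{1}{N}\log\ez\,|\det(G_N-tI)|\ \longrightarrow\ \int \log|t-\la|\,\dd\semi(\la)
\end{equation*}
supplies the leading exponential contribution. Packaging the density of $\nabla H_N(x_\rho)$ at $0$, the densities of $u$ and $y$, the polar Jacobian and this log-potential yields an integrand of the form $\exp\{N\psi_*(\rho,u,y)+o(N)\}$, and Laplace's method on the open set $F$ produces the claimed supremum.

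The principal obstacle, as emphasized in the introduction, is the LRC-specific feature that $H_N(x)$ is genuinely correlated with both $\nabla H_N(x)$ and $\nabla^2 H_N(x)$: unlike in the SRC or spherical settings the Kac--Rice integrand does not factor, so every cross-covariance has to be tracked when conditioning, and the extra parameter $y$ is exactly the bookkeeping for this coupling. A secondary technical point is uniformity of the GOE log-determinant asymptotic in the shift $t$, together with coercivity of $\psi_*$ as $|u|,|y|\to\infty$ and as $\rho$ approaches $R_1,R_2$, needed to justify Laplace's method on the open domain $F$.
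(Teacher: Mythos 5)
Your overall architecture coincides with the paper's: Kac--Rice, polar coordinates, conditioning on $\nabla H_N$ and on the field value, a GOE-type representation of the conditional Hessian, and a Laplace/large-deviation argument producing $\sup_F\psi_*$. However, the central structural claim is not correct as stated, and it is exactly the point where the LRC difficulty lives. The conditional Hessian is \emph{not} of the form $\mu I+a(\rho)G_N+b(\rho)YI+(\text{deterministic shift})$: conditioning on the field value breaks the exchangeability of the Hessian entries, because $\Cov[H_N(x),\partial_{ij}H_N(x)]$ contains the rank-one term $2D''(\|x\|^2/N)x_ix_j/N$, so the radial direction is distinguished. After conjugating by an orthogonal matrix whose first row is $x^{\mathsf T}/\|x\|$, the conditional Hessian is a \emph{bordered} matrix: an $(N-1)\times(N-1)$ tangential block equal to $\sqrt{-4D''(0)}\bigl(\sqrt{(N-1)/N}\,\GOE_{N-1}-z_3'I\bigr)$, an independent Gaussian first row/column, and a corner entry $z_1'$ with a \emph{different} mean and variance which is moreover correlated with the scalar shift $z_3'$. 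One then needs the Schur complement to see that the border and corner contribute only polynomially, which is a real piece of work (the paper's $I_1,I_2$ decomposition). In addition, the raw conditional covariances of off-diagonal Hessian entries can be negative, so the GOE representation does not exist for free: this is precisely what Assumption IV (the inequalities \eqref{eq:asmp1}--\eqref{eq:asmp2}) guarantees. Your attribution of Assumption IV to non-degeneracy of the density of $\nabla H_N(x)$ is wrong; that density is non-degenerate already because $\Cov(\nabla H_N(x))=D'(0)I_N$.

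Two further points you flag as ``secondary'' are in fact where much of the proof lies. First, for the lower bound when the maximizer $y_*$ of $\psi_*$ lies in the bulk $[-\sqrt2,\sqrt2]$, the issue is not uniformity in $t$: the functional $\nu\mapsto\int\log|y-\la|\,\nu(\dd\la)$ is only upper semicontinuous, so the empirical-measure LDP yields only the matching \emph{upper} bound. The paper obtains the lower bound by absorbing $\prod_i|\la_i-y|$ into the joint eigenvalue density of $\GOE_N$ (i.e.\ treating $y$ as an added eigenvalue) and controlling the ratio of partition functions $Z_N'/Z_{N-1}'$; some such regularization is unavoidable. Second, the reduction to compact ranges of $u$, $\rho$, $y$ and of the spectrum (needed since $E$ may be unbounded and $R_2=\infty$ is allowed) requires quantitative exponential-tightness estimates on the Kac--Rice integrand, not just coercivity of the limiting functional $\psi_*$; these occupy an entire section of the paper. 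As written, your proposal would compute the correct answer only because the anisotropic corner happens not to contribute at exponential order, but it does not establish the distributional identity it relies on.
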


The condition $|\mu|+\frac1{R_2} > 0$ merely says $R_2<\8$ if $\mu=0$, which is necessary to get non-trivial asymptotics as we saw in \pref{th:ttcpx}. In \pref{ex:2} at the end of Section \ref{se:4}, we provide details on how to recover Theorem \ref{th:ttcpx} from Theorem \ref{th:cpsublevel} when $B_N$ is a shell, which also provides some insight on the location of the majority of critical points.

Let us end this section with a brief description of the proofs, highlighting the main difference from previous results that also computed the mean number of critical points. Similar to many results in this area, we use the Kac--Rice formula as a starting point. Since our fields do not have constant variance and in particular $H_{N}$ is correlated to $\nabla H_{N}$, we are unable to trace a direct parallel to random matrix theory as done in \cites{ABC13, ABA13, subag2017complexity} where the Hessian is distributed as a matrix from the Gaussian Orthogonal Ensemble (GOE) plus a scalar matrix. This small difference actually provides major obstacles. To go around this difficulty, we first find out the conditional distribution of the Hessian  after some matrix manipulations. The GOE matrix appears as a summand of a principal submatrix which itself is correlated to the other element on diagonal. Then we estimate from above and below the conditional expectation of  the Hessian given $H_{N}$. Matching upper and lower bounds only come after long and non-trivial calculations and careful asymptotic analysis.

The rest of the paper is organized as follows. In Section \ref{se:whole}, we fix our notation and provide some preliminary facts before giving the proof of \pref{th:ttcpx}. We find the (conditional) distribution of the Hessian with some of the tools from random matrix theory in \pref{se:3} and establish various results on exponential tightness in \pref{se:exptt}, both of which will serve as the starting point for computing complexity functions in this paper and the companion paper \cite{AZ22}. We prove Theorem \ref{th:cpsublevel} in \pref{se:4} .

\subsection{Acknowledgments}
We would like to thank Yan Fyodorov for suggesting the study of fields with isotropic increments and providing several references. 

\section{Preliminary facts and proof of \pref{th:ttcpx}}\label{se:whole}
Throughout, we regard a vector to be a column vector. We write e.g.~$C_{\mu,D}$  for a constant depending on $\mu$ and $D$ which may vary from line to line. For $N\in \nz$, let us denote $[N]=\{1,2,...,N\}$. For a vector $(y_1,...,y_N)\in \rz^N$, we write $L(y_1^N)=\frac1N \sum_{i=1}^N \de_{y_i}$ for its empirical measure. Recall that an $N \times N$ matrix $M$ in the Gaussian Orthogonal Ensemble (GOE) is a symmetric matrix with centered Gaussian entries that satisfy
\begin{align}\label{eq:goemat}
  \ez(M_{ij})=0,\ \ \ez(M_{ij}^2) =\frac{1+\de_{ij}}{2N}.
\end{align}
We will simply write $\GOE_N$ or $\GOE(N)$ for the matrix $M$. Denoting by $\lambda_1 \leq \dots \leq \lambda_N$ the eigenvalues of $M$, we write $L_N =L(\la_1^N)= \frac{1}{N} \sum_{k=1}^N \delta_{\lambda_k}$ for its empirical spectral measure. From time to time, we may also use $\la_k$ to denote the $k$th smallest eigenvalue of $\GOE_{N+1}$  or $\GOE_{N-1}$. This should be clear from context and should not affect any results as we only care about the large $N$ behavior eventually. For a closed set $F\subset \rz$, we denote by  $\px(F)$ the set of probability measures with support contained in $F$. We equip the space $\px(\rz)$ with the weak topology, which is compatible with the distance
\begin{align}\label{eq:measd}
d(\mu,\nu):=\sup\Big\{\Big|\int f \dd \mu-\int f \dd\nu \Big|: \|f\|_\8\le1, \|f\|_L\le 1\Big\},\quad \mu,\nu \in\px(\rz),
\end{align}
where $\|f\|_\8$ and $\|f\|_L$ denote the $L^\8$ norm and Lipschitz constant of $f$, respectively. Let $B(\nu,\de)$ denote the open ball in the space $\px(\rz)$ with center $\nu$ and radius $\de$ w.r.t.~to the distance $d$ given in \pref{eq:measd}. Similarly, we write $B_K(\nu,\de)=B_K(\nu,\de) \cap \px([-K,K])$ for some constant $K>0$. We denote by $\si_{{\rm sc}}$ the semicircle law scaled to have support $[-\sqrt2,\sqrt2]$.

We will frequently use the following facts which are consequences of large deviations. Using the large deviation principle (LDP) of empirical measures of GOE matrices \cite{BG97}, for any $\de>0$, there exists $c=c(\de)>0$ and $N_\de>0$ such that for all $N>N_\de$,
\begin{align}\label{eq:conineq}
\pz(L(\la_1^N)\notin B(\si_{\rm sc}, \de) )\le e^{-cN^2}.
\end{align}
On the other hand, the LDP of the smallest eigenvalue of GOE matrices \cite{BDG01} states that $\la_1$ satisfies an LDP with speed $N$ and  a good rate function
\begin{align}
J_{1}(x)&=
\begin{cases}
k\int_{x}^{-\sqrt{2}}\sqrt{z^2-2}\dd z, & x\le -\sqrt 2,\\
\8, & x>-\sqrt2,
\end{cases}\notag \\
&=\begin{cases}
\frac12\log2 -\frac12 x\sqrt{x^2-2}-\log(-x+\sqrt{x^2-2}),& x\le -\sqrt2,\\
\8,& x>-\sqrt2.
\end{cases}\label{eq:rf1ev}
\end{align}
In particular, writing $\la_N^*=\max_{i\in [N]} |\la_i|$ for the operator norm of an $N\times N$ GOE matrix, by \cite{BDG01}*{Lemma 6.3}, there exists $N_0>0$ and $K_0>0$ such that for $K>K_0$ and $N>N_0$,
\begin{align}\label{eq:ladein}
\pz(\la_N^*>K)\le e^{-NK^2/9}.
\end{align}
This can also be seen directly from the LDP of $\la_1$, even though it was originally proved as a technical input for the LDP of $\la_1$. It follows that there exists an absolute constant $C>0$ such that
 \begin{align}\label{eq:goenorm}
 \ez[{\la_N^*}^{k}] \le C^{k}
 \end{align}
for any $k\ge0$ and $N>N_0$. For a probability measure $\nu$ on $\rz$, let us define
\begin{align}\label{eq:psidef0}
  \Psi(\nu,x)=\int_\rz \log |x-t| \nu(\dd t), \qquad \Psi_*(x)=\Psi(\si_{\rm sc}, x).
\end{align}
By calculation,
\begin{align}\label{eq:phi*}
\Psi_*(x) &= \frac12 x^2-\frac12 -\frac12\log2-\int_{\sqrt2}^{|x|} \sqrt{y^2-2}\dd y \indi\{|x|\ge\sqrt2\} \notag \\
&=
\begin{cases}
\frac12 x^2-\frac12 -\frac12\log2,& |x|\le \sqrt2,\\
\frac12x^2 -\frac12-\log2 - \frac12 |x| \sqrt{x^2-2}+\log(|x|+\sqrt{x^2-2}) ,& |x|>\sqrt2.
\end{cases}
\end{align}
Note that $\Psi_*(x)-\frac{x^2}{2}\le -\frac12-\frac12\log2$.

Let $z$ be a standard Gaussian r.v.~and $\Phi$ the c.d.f.~of $z$. For $a\in\rz, b>0$, we have
\begin{align}\label{eq:absgau}
\sqrt{\frac2\pi}b\le \ez|a+bz|=\frac{\sqrt2 b }{\sqrt{\pi}}e^{-\frac{a^2}{2b^2}}
 +a(2\Phi(\frac{a}{b})-1)\le \sqrt{\frac2\pi}b+|a|.
\end{align}
Unless specified otherwise, we always assume Assumptions I and II throughout. 

Let us prove the result for the total number of critical points. The strategy we employ is well-known and similar to the one developed in \cite{ABC13}: We start by applying the Kac--Rice formula and we derive the asymptotics in high dimensions with the use of random matrix theory and large deviation principles. The proof is somewhat straight-forward since we do not face the main obstacle of the next sections, i.e., the dependence of $H_{N}$ and $\nabla H_{N}$.

\begin{proof}[Proof of Theorem \ref{th:ttcpx}]
Let $E$ be a Borel subset of $\rz$. By the Kac--Rice formula \cite{AT07}*{Theorem 11.2.1},
\begin{align} \label{eq:startingpoint}
  &\ez \Crt_{N}(E,B_N)=\int_{B_N} \ez[|\det \nabla^2 H_N(x)| \indi\{\frac1N H_N(x)\in E\}| \nabla H_N(x)=0] p_{\nabla H_N(x)}(0)\dd x,
\end{align}
where $p_{\nabla H_N(x)}(t)$ is the p.d.f.~of ${\nabla H_N(x)}$ at $t$.

When $E=\rz$, the restriction on the range of $H_N(x)$ disappears. By independence of $\nabla H_N$ and $\nabla^2 H_N$ (see Lemma \ref{le:cov}) and dropping the restriction on index, the above formula simplifies to
\begin{align}\label{eq:KRSimple}
  \ez \Crt_N(\rz,B_N)=\int_{B_N} \ez[|\det \nabla^2 H_N(x)| ] p_{\nabla H_N(x)}(0)\dd x.
\end{align}
The following lemma is a random matrix computation.
\begin{lemma} Let $M$ be an $N \times N$ GOE matrix and set
$$P=aM -\left(b+\frac{\sigma}{\sqrt N}Z\right)I,$$
where $Z$ is a standard Gaussian random variable independent of $M$, $I$ is the identity matrix and $a,b, \sigma \in \mathbb R.$  Then
\begin{equation*}
\mathbb E |\det P| = \frac{\Gamma(\frac{N+1}{2})(N+1)a^{N+1}}{\sqrt{\pi}\sigma N^{\frac{N}{2}}e^{\frac{Nb^2}{2\sigma^2}}} \mathbb E \int \exp \left[ \frac{(N+1)x^2}{2} \left( 1- \frac{a^2}{\sigma^2} \right) + \frac{\sqrt{N(N+1)}axb}{\sigma^2} \right]  L_{N+1}(\dd x).
\end{equation*}
\end{lemma}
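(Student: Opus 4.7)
The overall plan is a classical random-matrix manipulation: rewrite $\ez_{\GOE_N}|\det(M-xI)|$ as a one-point statistic of a $\GOE_{N+1}$ ensemble, using the fact that $\prod_i|\la_i-x|$ merges with the Vandermonde of $(\la_1,\ldots,\la_N)$ into the Vandermonde of $(\la_1,\ldots,\la_N,x)$. I would begin by pulling the scalar $a$ outside the determinant: if $\la_1,\ldots,\la_N$ are the eigenvalues of $M$, then $|\det P|=a^N\prod_{i=1}^N|\la_i-c|$, where $c:=(b+\si Z/\sqrt N)/a$. Since $Z$ is independent of $M$, I would integrate it out immediately via the Gaussian density of $c$, namely $\frac{\sqrt N\,a}{\si\sqrt{2\pi}}\exp\bigl(-N(ac-b)^2/(2\si^2)\bigr)$. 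This step already explains the factor $a^{N+1}$ and (after the exponent completion below) the factor $e^{-Nb^2/(2\si^2)}$ in the denominator of the claimed formula.

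Next I would substitute the joint eigenvalue density of $\GOE_N$, which under the normalization \eqref{eq:goemat} is $\rho_N(\la)\propto e^{-(N/2)\sum_i\la_i^2}\prod_{i<j}|\la_i-\la_j|$. Treating $c$ as an auxiliary eigenvalue $\la_{N+1}$, the factor $\prod_{i=1}^N|\la_i-c|$ combines with the Vandermonde into $\prod_{1\le i<j\le N+1}|\la_i-\la_j|$. The crucial substitution is then $\la_i=\mu_i\sqrt{(N+1)/N}$ for $i=1,\ldots,N+1$ (and correspondingly for $c$): this turns $e^{-(N/2)\sum\la_i^2}$ into $e^{-((N+1)/2)\sum\mu_i^2}$, which is precisely the $\GOE_{N+1}$ Gaussian weight, at the cost of Jacobians and a Vandermonde power of $(N+1)/N$. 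Completing the $(N+1)$-fold sum by adding and subtracting $\frac{N+1}{2}\mu_{N+1}^2$, the leftover quadratic in $\mu_{N+1}$ works out to exactly $\frac{N+1}{2}(1-a^2/\si^2)\mu_{N+1}^2$ and the leftover linear term to $\frac{\sqrt{N(N+1)}\,ab}{\si^2}\mu_{N+1}$; relabelling $\mu_{N+1}=x$ produces the integrand displayed in the statement.

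The last step is to rewrite the resulting $\mu_{N+1}$-integral as a statistic of $L_{N+1}$: by permutation symmetry of the $\GOE_{N+1}$ joint density,
\[
\int g(\mu_{N+1})\rho_{N+1}(\mu)\,\dd\mu=\ez\int g(x)\,L_{N+1}(\dd x)
\]
for any function $g$ of a single coordinate. What remains is to collect scalar constants: the Jacobian and Vandermonde factors from the $\sqrt{(N+1)/N}$-rescaling, the ratio $Z_{N+1}/Z_N$ of Selberg/Mehta normalizers of the two $\GOE$ eigenvalue densities, and the $a^{N+1}\sqrt N/(\si\sqrt{2\pi})$ accumulated when integrating out $Z$. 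Using the closed-form evaluation of $Z_N$, these should collapse after simplification to the claimed prefactor $\Ga((N+1)/2)(N+1)/(\sqrt\pi\,\si\,N^{N/2})$.

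The delicate part will be the bookkeeping. In particular, obtaining the coefficient $(1-a^2/\si^2)$ in the integrand — rather than $(1/2-a^2/\si^2)$ or some other rational shift — hinges on using the exponent $N/2$ (not $N/4$) in the $\GOE_N$ eigenvalue density forced by \eqref{eq:goemat}, and on completing the square consistently after the $\sqrt{(N+1)/N}$-rescaling so that the terms $\frac{N+1}{4}\mu_{N+1}^2$ and $-\frac{(N+1)a^2}{2\si^2}\mu_{N+1}^2$ combine correctly. Verifying the scalar prefactor likewise reduces to a ratio-of-Gamma-functions calculation from the Selberg integral, routine but easy to mistake. No conceptual obstacle is expected beyond this careful tracking of constants.
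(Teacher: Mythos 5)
Your proposal is correct and is essentially the paper's argument: the paper simply invokes \cite{ABC13}*{Lemma 3.3} with $m=b/a$, $t=\si/(\sqrt N a)$ and sums over the eigenvalues, and that cited lemma is proved by exactly the computation you spell out (integrate out $Z$, merge $\prod_i|\la_i-c|$ with the Vandermonde into a $\GOE_{N+1}$ density via the $\sqrt{(N+1)/N}$ rescaling, and collect normalizing constants). Your bookkeeping checks out — the exponent of $N$ and the ratio of Selberg constants do collapse to $\Ga(\tfrac{N+1}{2})(N+1)/(\sqrt\pi\,\si N^{N/2})$ — with the only cosmetic caveat that $a^N$ should be $|a|^N$, a sign ambiguity already implicit in the lemma's statement and irrelevant in its application where $a,\si>0$.
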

\begin{proof}
Use \cite{ABC13}*{Lemma 3.3} with $m=\frac{b}{a}$, $t=\frac{\sigma}{\sqrt{N}a}$ and sum over the eigenvalues.
\end{proof}
From Lemma \ref{le:cov}, $\nabla^2 H_N(x)$ and $\sqrt{-4D''(0)}M- (\sqrt{\frac{-2D''(0)}N}Z-\mu)I$ have the same distribution. Then with $$m=-\mu/\sqrt{-4D''(0)},$$ from the Lemma above with $a=\sqrt{-4D''(0)}, b = -\mu, \sigma = \sqrt{-2D''(0)}$ we obtain
 \begin{align*}
  \ez|\det\nabla^2 H_N(x)|&= \frac{\sqrt{2}[-4D''(0)]^{N/2} \Ga(\frac{N+1}2)  (N+1) }{\sqrt{\pi}N^{N/2} e^{Nm^2 }} \mathbb E \int e^{-\frac12(N+1)w^2+2\sqrt{N(N+1)} m w}  L_{N+1}(\dd w).
\end{align*}

From \pref{le:repl}, we see that for the asymptotic analysis we can replace the above $\sqrt{N(N+1)}$ in the exponent by $N+1$, leaving us to compute asymptotics of

\begin{align*}
 I_N= \mathbb E \int e^{(N+1) \phi(x)}   L_{N+1}(\dd x),
\end{align*}
where
$$\phi(x)=-\frac12x^2-\frac{\mu x}{\sqrt{-D''(0)} }.$$
This is obtained in the following Lemma.

\begin{lemma}\label{dasodaskpow}
 If $|\mu|>\sqrt{-2D''(0)}$ then
\[
\lim_{N\to\8} \frac1N\log I_N =\frac{\mu^2}{-4D''(0)} +\log\frac{|\mu|}{\sqrt{-2D''(0)}}+\frac12,
\]
while if $|\mu| \le \sqrt{-2 D''(0)}$ we have
\[
\lim_{N\to\8}\frac1N \log I_N = \frac{\mu^2}{-2D''(0)}.
\]
\end{lemma}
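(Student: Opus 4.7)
The plan is to carry out a Varadhan-type asymptotic analysis on
\begin{align*}
I_N=\frac1{N+1}\ez\sum_{k=1}^{N+1}e^{(N+1)\phi(\la_k)},
\end{align*}
where $\la_1\le\cdots\le\la_{N+1}$ are the eigenvalues of $\GOE_{N+1}$. Since the law of GOE is invariant under $M\mapsto -M$, I may assume $\mu\ge 0$ (the case $\mu<0$ follows by symmetry); set $c=\mu/\sqrt{-D''(0)}$, so that $\phi(x)=-\tfrac12 x^2-cx$ is concave with global maximum $c^2/2$ attained at $x=-c$. The two regimes in the statement correspond precisely to whether $-c$ lies inside the semicircle support $[-\sqrt 2,\sqrt 2]$. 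Throughout, I combine the concentration of $L_{N+1}$ on $\si_{\rm sc}$ from \eqref{eq:conineq} with the LDP of $\la_1$ encoded by the rate function $J_1$ in \eqref{eq:rf1ev}.

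In Case 1 ($c\le\sqrt 2$), the global max of $\phi$ lies inside the spectral bulk, so the upper bound $I_N\le e^{(N+1)c^2/2}$ is immediate from $\phi\le c^2/2$. For the matching lower bound I would restrict to the window $[-c-\de,-c+\de]$: by \eqref{eq:conineq}, with probability $1-o(1)$ at least a positive fraction $\tfrac12\si_{\rm sc}([-c-\de,-c+\de])$ of the eigenvalues lie in this window, each contributing at least $e^{(N+1)(c^2/2-O(\de))}$; sending $N\to\8$ and then $\de\to 0$ gives the claimed $c^2/2=\mu^2/(-2D''(0))$.

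Case 2 ($c>\sqrt 2$) is the main case, with the dominant contribution coming from $\la_1$ tunnelling below $-\sqrt 2$. A short calculus exercise on $\sup_{x\le -\sqrt 2}[\phi(x)-J_1(x)]$, using $J_1'(x)=-\sqrt{x^2-2}$ for $x<-\sqrt 2$, yields the unique interior maximizer $x^*=-(c^2+2)/(2c)$. Using the identities $\sqrt{(x^*)^2-2}=(c^2-2)/(2c)$, $-x^*+\sqrt{(x^*)^2-2}=c$, and $-\tfrac12 x^*\sqrt{(x^*)^2-2}=(c^4-4)/(8c^2)$, a direct calculation gives
\begin{align*}
\phi(x^*)-J_1(x^*)=\frac{c^2}4+\frac12+\log\frac{c}{\sqrt 2}=\frac{\mu^2}{-4D''(0)}+\log\frac{|\mu|}{\sqrt{-2D''(0)}}+\frac12,
\end{align*}
which matches the claimed asymptotics. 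The lower bound follows from
\begin{align*}
I_N\ge \frac1{N+1}\ez\bigl[e^{(N+1)\phi(\la_1)}\indi\{\la_1\in[x^*-\de,x^*+\de]\}\bigr]
\end{align*}
together with the LDP lower bound for $\la_1$ and continuity of $\phi$. For the upper bound I would split each summand according to whether $\la_k\in[-\sqrt 2-\eps,\sqrt 2+\eps]$. The bulk piece is at most $(N+1)e^{(N+1)\phi(-\sqrt 2-\eps)}$, and an elementary check gives $\phi(-\sqrt 2)=c\sqrt 2-1<\phi(x^*)-J_1(x^*)$ for $c>\sqrt 2$. The right tail $\la_k>\sqrt 2+\eps$ is negligible thanks to \eqref{eq:ladein}. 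For the left tail the key observation is that on the likely event $\{\la_1\ge -c\}$, concavity of $\phi$ together with $\la_k\ge\la_1$ forces $\phi(\la_k)\le\phi(\la_1)$ for all $k$, so the full sum is bounded by $(N+1)e^{(N+1)\phi(\la_1)}$ and Varadhan's lemma applied to the LDP of $\la_1$ delivers exactly $\phi(x^*)-J_1(x^*)$; on the complementary event $\{\la_1<-c\}$, whose probability is $\le e^{-NJ_1(-c)}$, the crude bound $\phi(\la_k)\le c^2/2$ yields a contribution bounded by $\phi(-c)-J_1(-c)\le \phi(x^*)-J_1(x^*)$ by the optimality of $x^*$.

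The main obstacle is this last reduction of the full sum to a single-eigenvalue LDP problem for $\la_1$ in the upper bound of Case 2: without the concavity of $\phi$ and the ordering of the spectrum one would need a joint LDP for the smallest several eigenvalues, but the monotonicity $\phi(\la_k)\le \phi(\la_1)$ on $\{\la_1\ge -c\}$ collapses this back to a scalar computation, after which the explicit evaluation of $\phi(x^*)-J_1(x^*)$ gives the stated formula.
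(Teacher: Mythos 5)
Your proposal is correct and follows essentially the same route as the paper: the subcritical case is handled by the trivial upper bound $\phi\le c^2/2$ plus bulk concentration of $L_{N+1}$ near the maximizer, and the supercritical case by splitting on the event $\{\la_1\ge -c\}$ (where monotonicity of $\phi$ on $[-c,\infty)$ collapses the sum to $e^{(N+1)\phi(\la_1)}$ and Varadhan's lemma for the $\la_1$-LDP applies) versus its complement (bounded by $e^{(N+1)\phi(-c)}\pz(\la_1<-c)$), with the same maximizer $x^*=-c/2-1/c$ and the same value $\phi(x^*)-J_1(x^*)$. The extra bulk/right-tail decomposition in your upper bound is redundant but harmless, and your explicit evaluation of $\phi(x^*)-J_1(x^*)$ is a correct verification of the constant that the paper only states.
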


Assuming the above Lemma, we note that
\begin{align*}
  \int_{B_N}p_{\nabla H_N(x)}(0) \dd x =
  \begin{cases}
   \frac1{|\mu|^N} \pz(  z_N \in  |\mu| B_N/\sqrt{D'(0)} ),& \mu\neq 0,\\
   \frac1{(2\pi)^{N/2}D'(0)^{N/2}} |B_N| , & \mu=0,
  \end{cases}
\end{align*}
where $|B_N|$ is the Lebesgue measure of $B_N$ and $z_N$ is a standard $N$ dimensional Gaussian vector. It follows from \eqref{eq:KRSimple} that
\begin{align*}
\lim_{N\to \8} \frac{1}{N} \log \ez \Crt_N(\rz,B_N)&= \lim_{N\to\8} \frac{1}{N} \bigg(\log C_N + \log I_N\bigg),
\end{align*}
where
\begin{align}\label{eq:stir1}
  C_N = \begin{cases}
  \frac{\sqrt{2}[-4D''(0)]^{N/2} \Ga(\frac{N+1}2)  (N+1) }{\sqrt{\pi}N^{N/2} e^{Nm^2 }|\mu|^N }  \pz(  z_N \in  |\mu| B_N/\sqrt{D'(0)} ), & \mu\neq 0,\\
 \frac{\sqrt{2}[-4D''(0)]^{N/2} \Ga(\frac{N+1}2)  (N+1)|B_N| }{\sqrt{\pi}N^{N/2} (2\pi)^{N/2}D'(0)^{N/2}} , & \mu=0.
  \end{cases}
\end{align}
From Assumption III and Stirling's formula,
\begin{align*}
\lim_{N\to\8} \frac1N \log C_N = \begin{cases}
\log\frac{\sqrt{-2D''(0)}}{|\mu|} +\frac{\mu^2}{4D''(0)}- \frac12 -\Xi, & \mu\neq 0,\\
\log\sqrt{-2D''(0)}-\frac12-\frac12\log(2\pi)- \frac12\log[D'(0)]+\Theta, & \mu=0.
\end{cases}
\end{align*}
The above computation combined with Lemma \ref{dasodaskpow} finishes the proof of the Theorem.
\end{proof}

We finish this section with the proof of Lemma \ref{dasodaskpow}.

\begin{proof}[Proof of Lemma \ref{dasodaskpow}]
The proof follows from the large deviation principle for the smallest eigenvalues of GOE. In short, in the latter case, the maximum of $\phi$ is attained in the bulk while in the former case, the smallest eigenvalue contributes to the asymptotics of $I_N$.
We argue the first case $|\mu|>\sqrt{-2D''(0)}$. By symmetry, we only consider $\mu>\sqrt{-2D''(0)}$. Since $\phi(x)$ is bounded from above, by the LDP for $\la_1$ as in \pref{eq:rf1ev} and Varadhan's Lemma,
\begin{align}\label{eq:vrd1}
\sup_{x\in \rz} \phi(x)-J_{1}(x)&\le \liminf_{N\to\8} \frac1{N+1}\log \ez_{\GOE(N+1)} e^{(N+1)\phi(\la_{1})}\notag \\
&\le \limsup_{N\to\8} \frac1{N+1}\log \ez_{\GOE(N+1)} e^{(N+1)\phi(\la_{1})}\le \sup_{x\in \rz} \phi(x)-J_{1}(x).
\end{align}
Note that $\arg\max_x [\phi(x)-J_{1}(x) ]= -\frac{\mu}{\sqrt{-4D''(0)}} -\frac{\sqrt{-D''(0)}}{\mu}<-\sqrt2$. It follows that
\begin{align*}
    \liminf_{N\to \8} \frac1N\log I_N\ge\liminf_{N\to \8} \frac1N\log \frac1{N+1}\ez_{\GOE(N+1)} e^{(N+1)\phi(\la_1)}\\
\ge  \frac{\mu^2}{-4D''(0)}+\log\frac{\mu}{\sqrt{-2D''(0)}}+\frac12.
\end{align*}
On the other hand,
\begin{align*}
I_N\le \ez_{\GOE(N+1)} e^{(N+1)\phi(\la_1)}\indi\{\la_1\ge -\frac{\mu}{\sqrt{-D''(0)}}\}+ e^{(N+1)\phi(-\frac{\mu}{\sqrt{-D''(0)}})} \pz\Big(\la_1< -\frac{\mu}{\sqrt{-D''(0)}}\Big).
\end{align*}
For an upper bound for the first term on the right-hand side, we have by \pref{eq:vrd1},
\begin{align*}
\lim_{N \to\8}& \frac1N  \log \ez_{\GOE(N+1)} e^{(N+1)\phi(\la_1)} \\
&= \phi\Big(-\frac{\mu}{\sqrt{-4D''(0)}} -\frac{\sqrt{-D''(0)}}{\mu}\Big)-J_1\Big(-\frac{\mu}{\sqrt{-4D''(0)}} -\frac{\sqrt{-D''(0)}}{\mu}\Big).
\end{align*}
And for the second term, we find by \pref{eq:rf1ev}
\begin{align*}
&\limsup_{N \to\8} \frac1N \log \Big[e^{(N+1)\phi(-\frac{\mu}{\sqrt{-D''(0)}})} \pz\Big(\la_1< -\frac{\mu}{\sqrt{-D''(0)}}\Big)\Big]\\
&\le \phi\Big(-\frac{\mu}{\sqrt{-D''(0)}}\Big) -J_1\Big(-\frac{\mu}{\sqrt{-D''(0)}}\Big)\\
&\le \phi\Big(-\frac{\mu}{\sqrt{-4D''(0)}} -\frac{\sqrt{-D''(0)}}{\mu}\Big)-J_1\Big(-\frac{\mu}{\sqrt{-4D''(0)}} -\frac{\sqrt{-D''(0)}}{\mu}\Big).
\end{align*}
It follows that
\[
\limsup_{N \to\8} \frac1N \log I_N\le  \frac{\mu^2}{-4D''(0)}+\log\frac{\mu}{\sqrt{-2D''(0)}}+\frac12.
\]
We have proved the claim.

For the second case $|\mu| \le \sqrt{-2 D''(0)}$, the maximum of $\phi(x)$ on $[-\sqrt2,\sqrt2]$ is achieved at $x=-\frac{\mu} {\sqrt{-D''(0)}}$. Then for $\eps>0$ and $N$ large enough,
\[
\mathbb E \int_{-\frac{\mu}{\sqrt{-D''(0)}}}^{-\frac{\mu} {\sqrt{-D''(0)}}+\eps} e^{(N+1) \phi(-\frac{\mu} {\sqrt{-D''(0)}}+\eps)} L_{N+1}(\dd x) \le  I_N\le  e^{ (N+1) \phi\Big(-\frac{\mu} {\sqrt{-D''(0)}}\Big)}.
\]
Since $\lim_{N\to\8} \mathbb E L_{N+1}\Big(-\frac{\mu}{\sqrt{-D''(0)}}, -\frac{\mu}{\sqrt{-D''(0)}}+\eps\Big) >0$, it follows that
\begin{align*}
\frac{\mu^2}{-2D''(0)} -\frac{\eps^2}{2}\le \liminf_{N\to\8} \frac1N \log I_N \le \limsup_{N\to\8}\frac1N\log I_N \le \frac{\mu^2}{-2D''(0)}.
\end{align*}
The claim follows by sending $\eps\to 0+$.
\end{proof}

\section{Conditional law of $\nabla^2 H_N$ with constrained critical values}\label{se:3}

In this section, we provide the initial steps for computing complexity functions. Our main result is a relation between a conditional Hessian $\nabla^{2} H_{N}$ and the GOE given in \pref{eq:ayg} which implies \eqref{eq:martin} in the Kac--Rice representation for structure functions $D$ that satisfy Assumptions I, II and IV.

Recall the Kac--Rice formula \pref{eq:startingpoint}.
Note that $(H_N(x),\partial_i H_N(x), \partial_{kl} H_N(x))_{1\le i\le N, 1\le k\le l\le N}$ is a Gaussian field. From \pref{le:cov}, we have $ \Var(H_N(x))  = ND(\frac1N \|x\|^2)$ and the means
\begin{align*}
  \ez(H_N(x))&=\frac{\mu}2\|x\|^2,\ \
  \ez(\nabla H_N(x))=\mu x, \ \  \ez(\nabla^2 H_N(x)) = \mu I_N.
\end{align*}
Let $\Sigma_{01}= \Cov(H_N(x), \nabla H_N(x))= D'(\frac{\|x\|^2}N)x^\mathsf T$ and $\Sigma_{11}=\Cov(\nabla H_N(x))= D'(0) I_N$. By the conditional distribution of Gaussian vectors, we know $$Y:=\frac1N[H_N(x)-\Sigma_{01}\Sigma_{11}^{-1}\nabla H_N(x)]
 = \frac{H_N(x)}N- \frac{D'(\frac{\|x\|^2}N)\sum_{i=1}^{N} x_i \partial_i H_N(x)}{N D'(0)}$$
is independent from $\nabla H_N(x)$. Since $\nabla H_N(x)$ is independent from $\nabla^2H_N(x)$, by conditioning, we may rewrite \pref{eq:startingpoint} as
\begin{align}\label{eq:kr1}
  &\ \ \ez \Crt_{N}(E,B_N)\notag\\
  &=\int_{B_N} \ez[|\det \nabla^2 H_N(x)| \indi{\{ Y+ \frac1N\Sigma_{01}\Sigma_{11}^{-1}\nabla H_N(x) \in  E \}}| \nabla H_N(x)=0]  p_{\nabla H_N(x)}(0)\dd x \notag\\
  &=\int_{B_N} \ez[|\det \nabla^2 H_N(x)| \indi\{Y\in E\} ] p_{\nabla H_N(x)}(0)\dd x \notag\\
  &=\int_{B_N}\int_{E} \ez(|\det \nabla^2 H_N(x)| |Y=u) \frac1{\sqrt{2\pi}\si_Y} e^{-\frac{(u-m_Y)^2}{2\si_Y^2}} p_{\nabla H_N(x)}(0) \dd u \dd x,
\end{align}
where
\begin{align*}
  m_Y &= \ez(Y)=\frac{\mu\|x\|^2}{2N}-\frac{\mu D'(\frac{\|x\|^2}{N}) \|x\|^2}{D'(0) N}, \\
  \si_Y^2 & =\Var(Y)=\frac1N\Big(D(\frac{\|x\|^2}{N})-\frac{D'(\frac{\|x\|^2}N)^2}{D'(0)} \frac{\|x\|^2}{N}\Big).
\end{align*}
To proceed, we need the conditional distribution of $\nabla^2 H_N(x)$ given $Y=u$. A crucial difficulty arises here, however. Namely, one can check that the off-diagonal entries of $\nabla^2 H_N(x)$ given $Y=u$ may have negative covariance, for example,
\[
\Cov[(\partial_{ij} H_N(x), \partial_{kl} H_N(x))|Y=u]=  - \frac1N \frac{\al x_i x_j}{N} \frac{\al x_k x_l}{N}, 
\ i\neq j,  k\neq l, \{i,j \}\neq\{k,l\},
\]
for some $\al$ defined below, which prevents using GOE directly.

To overcome this difficulty, let us define
\begin{align}\label{eq:albt0}
   \alpha=\al(\|x\|^2/N) &= \frac{2D''(\|x\|^2/N)}{ \sqrt{ D(\frac{\|x\|^2}N)-\frac{D'({\|x\|^2}/N)^2}{D'(0)}\frac{\|x\|^2}N}}, \notag\\
   \beta=\bt(\|x\|^2/N) & =\frac{D'(\|x\|^2/N)-D'(0)}{\sqrt{ D(\frac{\|x\|^2}N)-\frac{D'({\|x\|^2}/N)^2}{D'(0)}\frac{\|x\|^2}N}}.
\end{align}
Note that $\al\le0$ and $\beta\le 0$. One should think of $\al$ and $\bt$ as $O(1)$ quantities.
Let us define $A=A_N=U(x)\nabla^2H_N(x) U(x)^\mathsf T$ where $U(x)$ is an $N\times N$ orthogonal matrix such that
\begin{align}\label{eq:umat}
U( \frac{\al x x^\mathsf T}N+\bt I_N )U^\mathsf T =
\begin{pmatrix}
 \frac{\al\|x\|^2}N +\bt &0  &\cdots  &0  \\
 0& \bt &  \cdots&0  \\
 \vdots&  \vdots& \ddots& \vdots  \\
 0&  0&  \cdots& \bt
 \end{pmatrix}.
\end{align}
In other words, we have for $U=(u_{ij})$,
\begin{align}\label{}
  \sum_{k,l} u_{ik}( \frac{\al x_k x_l}N +\bt \de_{kl}) u_{jl}&  =\al\de_{i1}\de_{j1} \frac{\|x\|^2}N +\bt\de_{ij}.
\end{align}
Indeed, such a $U(x)$ can be found by imposing the first row to be $\frac{x^\sfT}{\|x\|}$ for $x\neq 0$; and if $x=0$, $U(x)$ can be arbitrary orthogonal matrix. It follows that $\ez(A)=\mu I_N$, and by \pref{le:cov},
\begin{align*}
  \Cov(A_{ij}, A_{i'j'})&=\sum_{k,l,k',l'} u_{ik}u_{jl}u_{i'k'} u_{j'l'} \Cov(\partial_{kl} H_N(x),\partial_{k'l'} H_N(x))\\
  &=\frac{-2D''(0)}{N} (\de_{ij}\de_{i'j'}+\de_{ii'}\de_{jj'} +\de_{ij'}\de_{i'j}),\\
  \Cov(A_{ij}, \partial_l H_N(x)) &= \sum_{a,b} u_{ia} u_{jb}\Cov(\partial_{ab} H_N(x), \partial_l H_N(x))=0,\\
  \Cov(A_{ij}, H_N(x))& =\sum_{a, b} u_{ia}u_{jb} (\frac{2D''(\|x\|^2/N)x_a x_b}N +[D'(\|x\|^2/N)-D'(0)]\de_{ab})\\
  &=\frac{2D''(\|x\|^2/N) \de_{i1}\de_{j1} \|x\|^2}N +[D'(\|x\|^2/N) -D'(0)] \de_{ij}.
\end{align*}
Since $A$ and $\nabla^2 H_N(x)$ have the same eigenvalues, by \pref{eq:kr1},
\begin{align}\label{eq:kr2}
 \ez \Crt_{N}(E,B_N)=\int_{B_N}\int_{E} \ez(|\det A| |Y=u) \frac1{\sqrt{2\pi}\si_Y} e^{-\frac{(u-m_Y)^2}{2\si_Y^2}} p_{\nabla H_N(x)}(0) \dd u \dd x.
\end{align}
We need the conditional distribution of $A$ given $Y=u$. Note that
\[
\Cov(A_{ij}, Y)= \Cov(A_{ij},\frac{H_N}N) =\frac{2D''(\|x\|^2/N) \de_{i1}\de_{j1} \|x\|^2}{N^2 } +\frac{[D'(\|x\|^2/N) -D'(0)] \de_{ij}}{N}.
\]
Then conditioning on $Y=u$ we have
\begin{align}
  &\ez(A_{ij}|Y=u) = \ez(A_{ij})+\Cov(A_{ij},Y)\si_Y^{-2}(u-\ez(Y)) \nonumber \\
  & = \mu\de_{ij} + \frac{(\frac{2D''(\frac{\|x\|^2}N) \de_{i1}\de_{j1} \|x\|^2}N +[D'(\frac{\|x\|^2}N) -D'(0)] \de_{ij}) (u -\frac{\mu\|x\|^2}{2N} +\frac{\mu D'(\frac{\|x\|^2}N)\|x\|^2}{D'(0)N})}{ D(\frac{\|x\|^2}{N}) -\frac{D'(\frac{\|x\|^2}{N})^2 \|x\|^2}{D'(0) N}} , \nonumber \\
& m_{A|u}:=\ez(A|Y=u) =\mu I_N + \frac{u-\frac{\mu\|x\|^2}{2N} +\frac{\mu D'(\frac{\|x\|^2}N)\|x\|^2}{D'(0)N}}{D(\frac{\|x\|^2}{N}) -\frac{D'(\frac{\|x\|^2}{N})^2 \|x\|^2}{D'(0) N}}\nonumber \\
  & \times \begin{pmatrix} \
     \frac{2D''(\frac{\|x\|^2}N)\|x\|^2}{N} +D'(\frac{\|x\|^2}N) -D'(0)  & 0 \label{mean:A} \\
     0 &  [D'(\frac{\|x\|^2}N) -D'(0) ]I_{N-1}
   \end{pmatrix}, \\
 & \Cov[(A_{ij}, A_{i'j'})^\mathsf T |Y=u]=\Cov[(A_{ij}, A_{i'j'})^\mathsf T]-\Cov[(A_{ij}, A_{i'j'})^\mathsf T ,Y]\si_Y^{-2}\Cov[Y,(A_{ij}, A_{i'j'})^\mathsf T]  \nonumber \\
 &= \Cov[(A_{ij}, A_{i'j'})^\mathsf T] - \frac1N\nonumber \\
 & \times\begin{pmatrix}
          (\frac{\al \de_{i1}\de_{j1} \|x\|^2}{N}+\bt\de_{ij})^2 & (\frac{\al \de_{i1}\de_{j1} \|x\|^2}{N}+\bt\de_{ij}) (\frac{\al \de_{i'1}\de_{j'1} \|x\|^2}{N}+\bt\de_{i'j'} )\\
           (\frac{\al \de_{i1}\de_{j1} \|x\|^2}{N}+\bt\de_{ij}) (\frac{\al \de_{i'1}\de_{j'1} \|x\|^2}{N}+\bt\de_{i'j'} ) & (\frac{\al \de_{i'1}\de_{j'1} \|x\|^2}{N}+\bt\de_{i'j'})^2
        \end{pmatrix},\nonumber
\end{align}
where $\Cov[(A_{ij}, A_{i'j'})^\mathsf T] $ denotes the $2\times 2$ covariance matrix of $A_{ij}$ and $A_{i'j'}$ while $\Cov[(A_{ij}, A_{i'j'})^\mathsf T ,Y] $ denotes the $2\times 1$ covariance matrix of $(A_{ij}, A_{i'j'})^\mathsf T$ and $Y$. From here we see conditioning on $Y=u$,
\begin{align*}
  &\Cov[(A_{ij}, A_{i'j'})|Y=u] \\
  &=\frac{-2D''(0)(\de_{ij}\de_{i'j'}+\de_{ii'}\de_{jj'} +\de_{ij'}\de_{i'j})}{N}-\frac1N(\frac{\al \de_{i1}\de_{j1} \|x\|^2}{N}+\bt\de_{ij}) (\frac{\al \de_{i'1}\de_{j'1} \|x\|^2}{N}+\bt\de_{i'j'} )\\
  &=\begin{cases}
     \frac{-6D''(0)}{N}-\frac1N (\frac{\al \|x\|^2}N+\bt)^2, & i=j=i'=j'=1, \\
     \frac{-2D''(0)}{N} - \frac1N(\frac{\al \|x\|^2}N+\bt)\bt, & i=j=1\neq i'=j', \mbox{ or } i'=j'=1\neq i=j,\\
     \frac{-6D''(0)}{N} -\frac{ \bt^2}N,& i=j=i'=j'\neq 1,\\
     \frac{-2D''(0)}{N} -\frac{\bt^2}N , & 1\neq i=j\neq i'=j'\neq 1,\\
   \frac{-2D''(0)}{N} ,& i=i'\neq j=j', \mbox{ or } i=j'\neq j=i', \\
   0, &\text{otherwise}.
   \end{cases}
\end{align*}

Alternatively, one can find the above conditional covariances using spherical coordinates, which could avoid the matrix function $U(x)$. In order to draw connection with GOE, we first have to check that all the quantities above are positive. Note that $\al$ and $\bt$ depend on $\|x\|^2$ and $N$ through $\|x\|^2/N$. Let us write $\rho=\rho_{N}(x)=\frac{\|x\|}{\sqrt N}$ so that $\al=\al(\rho^2)$ and $\bt=\bt(\rho^2)$.

\begin{lemma}\label{le:albtd}
  We have $\lim_{\rho \to0+} \frac{D(\rho^2)}{\rho^4}-\frac{D'(\rho^2)^2}{D'(0)\rho^2} =-\frac32D''(0)$ and
  \begin{align*}
    \lim_{\rho\to0+} \bt(\rho^2)^2&=-\frac23 D''(0),\quad
    \lim_{\rho\to 0+} \al(\rho^2) \bt(\rho^2)\rho^2  = -\frac43 D''(0), \quad
    \lim_{\rho\to 0+} [\al(\rho^2) \rho^2]^2  = -\frac83 D''(0).
  \end{align*}
\end{lemma}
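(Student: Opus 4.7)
The plan is pure Taylor expansion. All four limits follow from the identity $D(0)=0$ (which holds since $D$ is a correlator, so $D(r)=\mathbb E[(X_N(x)-X_N(y))^2]/N$ with $r=\|x-y\|^2/N$) together with Assumption~I, which gives that $D$ is four times differentiable at $0$ with $D''(0)\neq 0$. I will work up to sufficient order (namely $\rho^6$ in the denominator expansion) so that the leading cancellation is visible, and then read off each limit as a ratio of leading coefficients.

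First I would Taylor expand at $r=0$, using $D(0)=0$:
\begin{align*}
D(\rho^2) &= D'(0)\rho^2+\tfrac{1}{2}D''(0)\rho^4+\tfrac{1}{6}D'''(0)\rho^6+o(\rho^6),\\
D'(\rho^2) &= D'(0)+D''(0)\rho^2+\tfrac{1}{2}D'''(0)\rho^4+o(\rho^4),\\
D'(\rho^2)^2 &= D'(0)^2+2D'(0)D''(0)\rho^2+\bigl[D'(0)D'''(0)+D''(0)^2\bigr]\rho^4+o(\rho^4).
\end{align*}
Multiplying the last line by $\rho^2/D'(0)$ and subtracting from the expansion of $D(\rho^2)$, the $\rho^2$ terms cancel exactly, and the $\rho^4$ coefficient becomes $\tfrac{1}{2}D''(0)-2D''(0)=-\tfrac{3}{2}D''(0)$. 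Dividing by $\rho^4$ gives the first claim:
\[
\frac{D(\rho^2)}{\rho^4}-\frac{D'(\rho^2)^2}{D'(0)\rho^2}=-\frac{3}{2}D''(0)+O(\rho^2).
\]

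Next, using the definitions of $\al$ and $\bt$ from \eqref{eq:albt0}, each of the remaining three limits is a ratio whose denominator is $D(\rho^2)-D'(\rho^2)^2\rho^2/D'(0)=-\tfrac{3}{2}D''(0)\rho^4+o(\rho^4)$. The numerators expand as
\begin{align*}
\bigl(D'(\rho^2)-D'(0)\bigr)^2 &= D''(0)^2\rho^4+o(\rho^4),\\
2D''(\rho^2)\bigl(D'(\rho^2)-D'(0)\bigr)\rho^2 &= 2D''(0)^2\rho^4+o(\rho^4),\\
4D''(\rho^2)^2\rho^4 &= 4D''(0)^2\rho^4+o(\rho^4),
\end{align*}
so the three ratios tend to $D''(0)^2/[-\tfrac{3}{2}D''(0)]=-\tfrac{2}{3}D''(0)$, to $2D''(0)^2/[-\tfrac{3}{2}D''(0)]=-\tfrac{4}{3}D''(0)$, and to $4D''(0)^2/[-\tfrac{3}{2}D''(0)]=-\tfrac{8}{3}D''(0)$, respectively, proving the three stated limits.

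There is no real obstacle here; the only subtlety is keeping enough terms in the denominator expansion so that the $\rho^2$ cancellation between $D(\rho^2)$ and $D'(\rho^2)^2\rho^2/D'(0)$ is seen, and observing that the next-order term is exactly $-\tfrac{3}{2}D''(0)\rho^4$ — which is nonzero by Assumption~I. This nonvanishing is what legitimates dividing by the denominator in the last three limits.
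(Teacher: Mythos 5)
Your proof is correct and is essentially the paper's argument in a different dress: the paper evaluates the same three ratios with the common denominator $D(\rho^2)-D'(\rho^2)^2\rho^2/D'(0)\sim-\tfrac32 D''(0)\rho^4$ via l'Hospital's rule, whereas you extract the identical leading coefficients by Taylor expansion with Peano remainder, which Assumption~I licenses. The cancellation of the $\rho^2$ terms and the nonvanishing of $D''(0)$ are the same key points in both versions, so nothing further is needed.
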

\begin{proof}
  Using l'Hospital's rule together with $D(0)=0$,
  \begin{align*}
    \lim_{\rho\to 0+} \frac{D(\rho^2)}{\rho^4}-\frac{D'(\rho^2)^2}{D'(0)\rho^2} & =\lim_{\rho \to 0+} \frac{D'(\rho^2)\rho^2-D(\rho^2)}{\rho^4}-\frac{2D'(\rho^2)D''(\rho^2)}{D'(0)} = -\frac32D''(0).
  \end{align*}
  It follows that
  \begin{align*}
  \lim_{\rho \to 0+}& \bt(\rho^2)^2= \lim_{\rho\to 0+} \frac{ [\frac{D'(\rho^2)-D'(0)}\rho^2]^2}{\frac{D(\rho^2)}{\rho^4} -\frac{D'(\rho^2)^2}{D'(0)\rho^2}}= -\frac23 D''(0),\\
  \lim_{\rho\to 0+}& \al(\rho^2)\bt(\rho^2) \rho^2 =\lim_{\rho\to0+}\frac{[2D''(\rho^2)] \frac{D'(\rho^2)-D'(0)}{\rho^2}} {\frac{D(\rho^2)}{\rho^4}-\frac{D'(\rho^2)^2}{D'(0)\rho^2}} =-\frac43 D''(0),\\
  \lim_{\rho\to 0+}&[\al(\rho^2)\rho^2]^2 = \lim_{\rho\to0+}\frac{[2D''(\rho^2)]^2} {\frac{D(\rho^2)}{\rho^4}-\frac{D'(\rho^2)^2}{D'(0)\rho^2}}=-\frac83 D''(0).\qedhere
  \end{align*}
\end{proof}

In light of \pref{le:albtd}, we make the following observation. Following \cite{SSV}*{Theorem 8.2}, a function $f: (0,\8)\to(0,\8)$ is a Thorin--Bernstein function if and only if $\lim_{x\to0+}f(x)$ exists and its derivative has a representation
\begin{align}\label{eq:tbfcn}
f'(x)=\frac{a}{x}+b +\int_{(0,\8)} \frac1{x+t}\si (\dd t),
\end{align}
where $a,b\ge0$ and $\si$ is a measure on $(0,\8)$ satisfying $\int_{(0,\8)} \frac1{1+t}\si(\dd t)<\8$. In particular, the functions $D(r)=\log(1+r/\eps)$ and $D(r)=(r+\eps)^\ga-\eps^{\ga}$ are Thorin--Bernstein functions. Recall the definitions of $\al$ and $\bt$ as in \pref{eq:albt0}. The proof of the following analytical result is deferred to Appendix \pref{se:aux}.
\begin{lemma}\label{le:dgeab}
For any $x\in \rz^N\setminus\{0\}$, we have
\begin{align}
  -2D''(0)&> \left(\frac{\al\|x\|^2}{N}+\bt\right)\bt,\label{eq:asmp1} \\
  -4D''(0)&> \left(\frac{\al\|x\|^2}{N}+\bt\right)\frac{\al \|x\|^2}N,\label{eq:asmp2}
\end{align}
provided anyone of the following conditions holds:
\begin{enumerate}
  \item For all $x\neq 0$,
  \begin{align}\label{eq:btbd}
\bt^2\le -\frac23 D''(0).
\end{align}
  \item For all $y\ge0$,
  \begin{align}\label{eq:btinc}
2D'(0)D''(y)[D(y)-D'(y)y]+D'(y)[D'(y)-D'(0)]^2 \ge 0.
\end{align}
\item For all $y\ge0$
\begin{align}\label{eq:btbd2}
\frac{D'(y) y}{D'(0)}-\frac{D'(y)-D'(0)}{D''(0)}\ge 0.
\end{align}
\item For all $y\ge 0$,
\begin{align}\label{eq:btbd3}
-\frac{D'(y)}{D''(y)} +\frac{D'(0)}{D''(0)}\ge y.
\end{align}
\item   For all $y\ge 0$,
\begin{align}\label{eq:btbd4}
\frac{-D''(y)^2+D'''(y)D'(y)}{D''(y)^2}\ge 1.
\end{align}
\item $D$ is a Thorin--Bernstein function with $a=0$ in \pref{eq:tbfcn}.
\end{enumerate}
\end{lemma}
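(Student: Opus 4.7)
The plan is to reduce \eqref{eq:asmp1} and \eqref{eq:asmp2} to polynomial inequalities in $D$ and its derivatives, then show condition (1) implies them directly, and finally show the remaining conditions each imply (1) through a chain of implications. Writing $y = \|x\|^2/N > 0$ and $W = D(y) - D'(y)^2 y/D'(0) > 0$, multiplying by $W$ turns \eqref{eq:asmp1} and \eqref{eq:asmp2} into
\[
-2D''(0)\,W > (2D''(y)y + D'(y) - D'(0))(D'(y) - D'(0)), \quad -4D''(0)\,W > 2D''(y)y\,(2D''(y)y + D'(y) - D'(0)),
\]
with all factors on each right-hand side non-positive. By \pref{le:albtd}, both become equalities as $y \to 0+$, so the task is strict inequality on $(0,\infty)$.

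I would first handle condition (1). Since $D''$ is non-decreasing ($D''' \ge 0$), $|D'(0) - D'(y)| = \int_0^y |D''(s)|\,ds \ge y|D''(y)|$, strictly for $y > 0$ under Assumption I, so $(\alpha y)^2 < 4\beta^2$. Under (1), $\beta^2 \le -\tfrac{2}{3}D''(0)$, hence $(\alpha y)^2 < -\tfrac{8}{3}D''(0)$ and $\alpha y\beta = |\alpha y|\cdot|\beta| < 2\beta^2 \le -\tfrac{4}{3}D''(0)$; combining yields
\[
(\alpha y + \beta)\beta = \alpha y\beta + \beta^2 < -2D''(0), \qquad (\alpha y + \beta)\alpha y = (\alpha y)^2 + \alpha y\beta < -4D''(0),
\]
which are \eqref{eq:asmp1}, \eqref{eq:asmp2}.

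Next I would observe that condition (2) is equivalent to monotonicity of $\beta^2$. A direct computation using $W + \tfrac{(D'(y)-D'(0))D'(y)y}{D'(0)} = D(y) - D'(y)y$ gives
\[
(\beta^2)'(y) = \frac{D'(y) - D'(0)}{D'(0)\,W^2}\bigl[2D'(0)D''(y)(D(y) - D'(y)y) + D'(y)(D'(y) - D'(0))^2\bigr],
\]
and since the prefactor is non-positive, $(\beta^2)' \le 0$ iff the bracket is $\ge 0$, which is precisely (2). Together with $\beta^2(0+) = -\tfrac{2}{3}D''(0)$ from \pref{le:albtd}, this shows $(2) \Rightarrow (1)$.

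Finally, I would establish the chain $(6) \Rightarrow (5) \Rightarrow (4), (3)$ and $(3), (4), (5) \Rightarrow (2)$. For $(6) \Rightarrow (5)$: from the representation $D'(x) = b + \int(x+t)^{-1}\sigma(dt)$ with $b \ge 0$, Cauchy--Schwarz gives $D''(x)^2 = \bigl(\int(x+t)^{-2}\sigma(dt)\bigr)^2 \le (D'(x) - b)\,D'''(x)/2 \le D'(x)D'''(x)/2$, which is (5). For $(5) \Rightarrow (4)$: $\frac{d}{dy}(-D'(y)/D''(y)) = (D'(y)D'''(y) - D''(y)^2)/D''(y)^2 \ge 1$ by (5), and integrating from $0$ gives (4). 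For $(5) \Rightarrow (3)$: (5) is equivalent to concavity of $1/D'$, so $1/D'(y) \le 1/D'(0) + y(1/D')'(0)$, which rearranges to (3). The hard part will be closing $(3), (4), (5) \Rightarrow (2)$: each of these conditions provides only a partial bound on the derivatives of $D$, and matching them to (2) requires rewriting $D(y) - D'(y)y = \int_0^y s(-D''(s))\,ds$, invoking the monotonicity of $-D''$, and carefully controlling the integrands term by term. This final step is where Assumption IV and the Bernstein structure of $D$ play the decisive role, ensuring that the tight constant $-\tfrac{2}{3}D''(0)$ survives for all $y > 0$.
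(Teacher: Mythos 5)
Your treatment of condition (1) and of the implication $(2)\Rightarrow(1)$ is correct and essentially the paper's argument: the strict bound $|D''(y)|y<D'(0)-D'(y)$ (from $D'''>0$) gives $(\al y)^2<4\bt^2$, and your explicit computation of $(\bt^2)'$ — which the paper only asserts — correctly identifies \eqref{eq:btinc} as the condition for monotonicity of $\bt^2$, so that \pref{le:albtd} yields \eqref{eq:btbd}. The implications $(6)\Rightarrow(5)$ (Cauchy--Schwarz on the Thorin--Bernstein representation), $(5)\Rightarrow(4)$ (integrating $\frac{\dd}{\dd y}(-D'/D'')\ge1$), and $(5)\Rightarrow(3)$ (concavity of $1/D'$) are all sound.

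However, there is a genuine gap: none of the conditions (3), (4), (5), (6) is actually connected to the conclusion. Your implications run from (6) and (5) \emph{down to} (3) and (4), but you never show that (3) or (4) — each of which is a standalone hypothesis of the lemma — implies \eqref{eq:asmp1} and \eqref{eq:asmp2}. The closing step you defer, namely $(3),(4),(5)\Rightarrow(2)$, is precisely the part that carries the content, and it is also not the route the paper takes: condition (2) asserts global monotonicity of $\bt^2$, which is strictly stronger than what is needed and is not obviously a consequence of (3). The paper instead proves $(3)\Rightarrow(1)$ directly, by introducing $f(y)=-D''(0)[D'(0)D(y)-D'(y)^2y]-\tfrac32 D'(0)[D'(y)-D'(0)]^2$, observing that \eqref{eq:btbd} is equivalent to $f(y)\ge0$, that $f(0)=0$, and that under \eqref{eq:btbd2} together with the convexity bound $\frac{D'(y)-D'(0)}{y}\le D''(y)\le0$ one has $f'(y)\ge0$; it then closes the chain with $(4)\Leftrightarrow(3)$ by Cauchy's mean value theorem. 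You need this step (or a substitute for it) before cases (3)--(6) of the lemma can be considered proved; as written, your argument establishes the lemma only under hypotheses (1) and (2).
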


From now on, we always assume \pref{eq:asmp1} and \pref{eq:asmp2}, thus $\Cov[(A_{ij}, A_{i'j'})|Y=u]\ge0$ for all $i,i',j,j'$. Recalling \eqref{mean:A}, let us write
\begin{align}
  m_1 & =m_1(\rho,u)= \mu + \frac{(u-\frac{\mu\rho^2}{2} +\frac{\mu D'(\rho^2)\rho^2}{D'(0)}) ( 2D''(\rho^2)\rho^2+D'(\rho^2) -D'(0) )}{D(\rho^2) -\frac{D'(\rho^2)^2 \rho^2}{D'(0) }}, \notag \\
  m_2&=m_2(\rho,u)= \mu + \frac{(u-\frac{\mu \rho^2}{2} +\frac{\mu D'(\rho^2) \rho^2}{D'(0)}) (D'(\rho^2) -D'(0) )}{D( \rho^2) -\frac{D'(\rho^2)^2 \rho^2}{D'(0) }}, \notag\\
\si_1 & =\si_1(\rho)= \sqrt{\frac{-4D''(0)-(\al \rho^2 +\bt)\al\rho^2}{N}}, \ \ \
  \si_2  =\si_2(\rho)= \sqrt{\frac{-2D''(0)-(\al\rho^2 +\bt)\bt}{N}}, \notag \\
  m_Y&=m_Y(\rho)= \frac{\mu\rho^2}{2}-\frac{\mu D'(\rho^2) \rho^2}{D'(0) }, \ \ \ \si_Y =\si_Y(\rho) =\sqrt{\frac1N\Big(D(\rho^2)-\frac{D'(\rho^2)^2\rho^2}{D'(0)} \Big)},\notag \\
  \alpha &=\alpha(\rho^2)= \frac{2D''(\rho^2)}{ \sqrt{ D(\rho^2)-\frac{D'(\rho^2)^2 \rho^2}{D'(0)}}},  \ \ \
   \beta =\beta(\rho^2)=\frac{D'(\rho^2 )-D'(0)}{\sqrt{ D(\rho^2 )-\frac{D'(\rho^2)^2 \rho^2}{D'(0)}}}, \label{eq:msialbt}
\end{align}
where $\rho=\frac{\|x\|}{\sqrt N}$. From time to time, we also use the following change of variable
\begin{align}\label{eq:uvcov}
v=\frac{u-\frac{\mu\rho^2}{2}+\frac{\mu D'(\rho^2)\rho^2}{D'(0)}}{\sqrt{D(\rho^2)-\frac{D'(\rho^2)^2\rho^2}{D'(0)}}} = \frac{u-m_Y}{\sqrt{N}\si_Y}
\end{align}
so that
\begin{align}
m_1&=\mu +v(\al \rho^2+\bt), \ \
m_2=\mu+ v\bt.\label{eq:m12cov}
\end{align}
Let 
\begin{align*}
  G= G(u) =
  \begin{pmatrix}
    z_1'& \xi^\mathsf T \\
     \xi & \sqrt{-4D''(0)} (\sqrt{\frac{N-1}{N}}\GOE_{N-1}-z_3'I_{N-1})
  \end{pmatrix} ,
\end{align*}
where with $z_1,z_2,z_3$ being independent standard Gaussian random variables,
\begin{align*}
  z_1'&=\si_1 z_1 - \si_2  z_2 + m_1, \quad
   z_3'=\frac1{\sqrt{-4D''(0)}}\Big(\si_2 z_2+ \frac{  \sqrt{\al\bt}\rho }{\sqrt N} z_3 - m_2\Big),
\end{align*}
and $\xi$ is a centered column Gaussian vector with covariance matrix $\frac{-2D''(0)}{N}I_{N-1}$ which is independent from $z_1,z_2,z_3$ and the GOE matrix $\GOE_{N-1}$. 
The above discussion yields our main result of this section.
\begin{proposition}
  Assume Assumptions I, II and IV. Then we have in distribution
\begin{align}\label{eq:ayg}
  (U\nabla^2 H_N U^\sfT | Y=u)  & \stackrel{d}{=}
  G.
\end{align}
\end{proposition}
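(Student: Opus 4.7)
The strategy is to exploit the fact that both sides of \eqref{eq:ayg} are Gaussian, which reduces the identity in distribution to matching the conditional mean and the entrywise conditional covariance. The joint pair $(A,Y)$ is Gaussian because $A = U \nabla^2 H_N U^\sfT$ is a deterministic orthogonal conjugation of a Gaussian Hessian and $Y$ is a linear functional of $(H_N(x), \nabla H_N(x))$; hence $A\,|\,Y=u$ is Gaussian with mean $m_{A|u}$ and the entrywise conditional covariances listed in the six-case computation preceding \eqref{eq:msialbt}. On the other hand, $G$ is a symmetric Gaussian matrix by construction, its entries being explicit linear combinations of the independent standard Gaussians $z_1,z_2,z_3$, the vector $\xi$, and an independent $\GOE_{N-1}$ block.

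The plan is, first, to read off $\ez[z_1']=m_1$, $\ez[\xi]=0$, and $-\sqrt{-4D''(0)}\,\ez[z_3']=m_2$, which reproduces \eqref{mean:A}; and second, to verify each of the six covariance cases. The nontrivial ones are: the variance of $z_1'$, which equals $\sigma_1^2+\sigma_2^2 = (-6D''(0)-(\alpha\rho^2+\beta)^2)/N$, matching the $(1,1)$-$(1,1)$ case; the covariance $\Cov(z_1',-\sqrt{-4D''(0)}\,z_3') = \sigma_2^2 = (-2D''(0)-(\alpha\rho^2+\beta)\beta)/N$, matching the $(1,1)$-$(i,i)$ cross case for $i\geq 2$; the variance of a lower-right diagonal entry, obtained by adding the GOE diagonal variance $-4D''(0)/N$ to $\Var(\sqrt{-4D''(0)}\,z_3') = \sigma_2^2 + \alpha\beta\rho^2/N$, giving $(-6D''(0)-\beta^2)/N$; and the covariance of two distinct lower-right diagonal entries, which comes solely from the shared $z_3'$ and equals $(-2D''(0)-\beta^2)/N$. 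The off-diagonal entries in the first row/column come from $\xi$ with variance $-2D''(0)/N$; the off-diagonal entries of the lower-right block come from $\sqrt{(N-1)/N}\cdot\sqrt{-4D''(0)}\,\GOE_{N-1}$ with the same variance; and all remaining cross-covariances vanish by the independence structure built into $G$, matching the ``otherwise $=0$'' case.

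The main technical point, more a design subtlety than an obstacle, is that the construction of $z_3'$ must simultaneously reproduce three constraints: the $(1,1)$-$(i,i)$ cross covariance, the lower-right diagonal variance, and the cross covariance of two distinct lower-right diagonals. The shared variable $z_2$ (which also appears in $z_1'$) is what generates the $(1,1)$-$(i,i)$ covariance, while the additional independent $z_3$ with coefficient $\sqrt{\alpha\beta}\,\rho/\sqrt{N}$ inflates $\Var(\sqrt{-4D''(0)}\,z_3')$ from $\sigma_2^2$ to $\sigma_2^2+\alpha\beta\rho^2/N$ without disturbing the cross covariance. Well-definedness of $G$ requires $\sigma_1^2,\sigma_2^2\geq 0$, equivalently inequalities \eqref{eq:asmp1}--\eqref{eq:asmp2}, which is the content of Assumption IV via \pref{le:dgeab}; one also needs $\alpha\beta\geq 0$, which is automatic from $\alpha,\beta\leq 0$. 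Once these checks are completed, \eqref{eq:ayg} follows because Gaussian distributions are determined by their mean and covariance.
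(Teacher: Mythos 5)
Your proposal is correct and follows the same route as the paper: the paper's ``proof'' is precisely the preceding computation of the conditional mean \eqref{mean:A} and the six-case conditional covariance of $A=U\nabla^2H_NU^\sfT$ given $Y=u$, after which the proposition is asserted by matching these against the explicit Gaussian construction $G$. Your entrywise verification (including the role of the shared $z_2$, the extra $z_3$ with coefficient $\sqrt{\al\bt}\rho/\sqrt N$, and the need for \eqref{eq:asmp1}--\eqref{eq:asmp2} and $\al\bt\ge0$) simply makes explicit the check the paper leaves implicit.
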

In the following we write frequently
$$G_{**}=\sqrt{-4D''(0)} \Big(\sqrt{\frac{N-1}{N}}\GOE_{N-1}-z_3'I_{N-1}\Big).$$
To connect with \pref{eq:kr2}, we have
\begin{equation}\label{eq:martin}
\ez(|\det A||Y=u) = \int |\det a| p_{A|Y}(a|u) \dd a = \ez(|\det G|).
\end{equation}

\section{Exponential tightness}\label{se:exptt}
The purpose of this section is to prove several exponential tightness results so that our future analysis will be reduced to the compact setting. Let $E \subset \rz$ be a Borel set. Hereafter, for simplicity, let us assume $B_N$ is a shell $B_{N}(R_{1},R_{2})=\{ x\in \mathbb R^{N}: R_{1}< \frac{\| x\|}{\sqrt N} < R_{2} \}$, $0\le R_1<R_2\le \8$. Recall that in this case we write
$\Crt_{N}(E, (R_1, R_2))= \Crt_{N}(E,B_N(R_1,R_2)).$
 Using spherical coordinates and writing $\rho=\frac{\|x\|}{\sqrt N}$, by the Kac--Rice formula we have
\begin{align}
&\ez\Crt_{N}(E,(R_1,R_2))=\int_{B_N} \int_E \ez[|\det A| |Y=u] \frac1{\sqrt{2\pi}\si_Y} e^{-\frac{(u-m_Y)^2}{2\si_Y^2}} p_{\nabla H_N(x)}(0) \dd u \dd x \notag\\
&=S_{N-1} N^{(N-1)/2} \int_{R_1}^{R_2}\int_E \ez[|\det G| ] \frac1{\sqrt{2\pi}\si_Y} e^{-\frac{(u -m_Y)^2}{2\si_Y^2}}  \frac1{(2\pi)^{N/2} D'(0)^{N/2}} e^{-\frac{N \mu^2 \rho^2}{2D'(0)}} \rho^{N-1}   \dd u  \dd\rho.\label{eq:krerr}
\end{align}
Here  $S_{N-1} = \frac{2\pi^{N/2}}{\Gamma(N/2)}$ is the area of $N-1$ dimensional unit sphere, $G$ depends on $u$ implicitly.
Using the Stirling formula, we have
\begin{align}\label{eq:snlim}
\lim_{N\to\8} \frac1N \log (S_{N-1}N^{\frac{N-1}{2}})= \frac12\log(2\pi)+\frac12.
\end{align}

Recall the representation \pref{eq:ayg}. Let $\la_1\le \cdots \le \la_{N-1}$ be the eigenvalues of  $\GOE_{N-1}$.  The eigenvalues of $G_{**}$ can be represented as $\{\sqrt{-4D''(0)}((\frac{N-1}{N})^{1/2}\la_i-z_3')\}_{i=1}^{N-1}$. By the representation, we may find a random orthogonal matrix $V$ which is independent of the unordered eigenvalues $\tilde \la_j, j=1,...,N-1$ and $z_3'$, such that
\begin{align}\label{eq:goedc}
G_{**} = \sqrt{-4D''(0)} V^\mathsf{T} \begin{pmatrix}
  (\frac{N-1}{N})^{1/2}\tilde \la_1-z_3' &\cdots  &0  \\
\vdots& \ddots& \vdots  \\
0&  \cdots& (\frac{N-1}{N})^{1/2}\tilde\la_{N-1}-z_3'
\end{pmatrix} V.
\end{align}
By the rotational invariance of Gaussian measures, $V \xi$ is a centered Gaussian vector with covariance matrix $\frac{-2D''(0)}N I_{N-1}$ that is independent of $z_3'$ and $\tilde\la_j$'s. We can rewrite $V \xi \stackrel{d}{=}\sqrt{\frac{-2D''(0)}{N}} Z$, where $Z=(Z_1,..., Z_{N-1})$ is an $N-1$ dimensional standard Gaussian random vector.
Using the determinant formula for block matrices or the Schur complement formula,
\begin{align}
    \det G= \det (G_{**})(z_1' - \xi^\sfT G_{**}^{-1} \xi )& = [-4D''(0)]^{(N-1)/2} z_1'\prod_{j=1}^{N-1} ((\frac{N-1}{N})^{1/2}\la_j -z_3') \notag\\
&\ \ \ - \frac{[-4D''(0)]^{N/2}}{2N} \sum_{k=1}^{N-1} Z_k^2 \prod_{j\neq k}^{N-1} ((\frac{N-1}{N})^{1/2}\la_j -z_3').\label{eq:schur}
\end{align}
It follows from \pref{eq:krerr} that
\begin{align}
  &\ez\Crt_{N} (E,(R_1,R_2))
  = S_{N-1}N^{(N-1)/2}\int_{R_1}^{R_2} \int_{E} \ez\Big(\Big| [-4D''(0)]^{(N-1)/2} z_1'  \prod_{j=1}^{N-1} ((\frac{N-1}{N})^{1/2}\la_j-z_3') \notag\\
  &\ \ -  \frac{[-4D''(0)]^{N/2}}{2N} \sum_{k=1}^{N-1} Z_k^2 \prod_{j\neq k}^{N-1} ((\frac{N-1}{N})^{1/2}\la_j -z_3')\Big|\Big) \frac{e^{-\frac{(u-m_Y)^2}{2\si_Y^2}} }{\sqrt{2\pi}\si_Y} \frac{e^{-\frac{N \mu^2 \rho^2}{2D'(0)}} }{(2\pi)^{N/2} D'(0)^{N/2}}  \rho^{N-1}  \dd u  \dd\rho \notag \\
  &\le  S_{N-1}N^{(N-1)/2} [I_1(E, (R_1,R_2)) +I_2(E, (R_1,R_2))], \label{eq:ecnr12}
\end{align}
where
\begin{align}
 I_1(E, (R_1,R_2)) &= [-4D''(0)]^{\frac{N-1}{2}} \int_{R_1}^{R_2} \int_{E} \ez\Big[|z_1'| \prod_{i=1}^{N-1} |(\frac{N-1}{N})^{1/2}\la_i-z_3'| \Big]\notag\\
  &\frac{ e^{-\frac{(u-m_Y)^2}{2\si_Y^2}}}{\sqrt{2\pi}\si_Y} \frac{e^{-\frac{N \mu^2 \rho^2}{2D'(0)}}}{(2\pi)^{N/2} D'(0)^{N/2}}  \rho^{N-1}   \dd u  \dd\rho,\notag \\
 I_2(E,(R_1,R_2)) &= \frac{[-4D''(0)]^{\frac{N}{2}} }{2N} \sum_{i=1}^{N-1}\int_{R_1}^{R_2} \int_{E} \ez\Big[Z_i^2 \prod_{j\neq i} |(\frac{N-1}{N})^{1/2}\la_j-z_3'| \Big ] \notag \\
  &\frac{ e^{-\frac{(u-m_Y)^2}{2\si_Y^2}}}{\sqrt{2\pi}\si_Y} \frac{e^{-\frac{N \mu^2 \rho^2}{2D'(0)}}}{(2\pi)^{N/2} D'(0)^{N/2}}  \rho^{N-1}   \dd u  \dd\rho.\label{eq:ie12}
\end{align}

In the following we will employ hard analysis to derive various estimates that would reduce the problem to the compact setting.

\begin{lemma}\label{le:apest}
	For any $\rho>0$, $u\in \rz$, we have
	\begin{align}
		\frac{1}{D'(0)-D'(\rho^2)}&\le \frac{C_D (1+\rho^2)}{\rho^2}, \ \ \		\frac1{\sqrt{D(\rho^2)-\frac{D'(\rho^2)^2 \rho^2}{D'(0)} }} \le \frac{C_D (1+\rho^2)}{\rho^2}, \notag\\
		|m_i| &\le |\mu| + C_D \Big|\frac{u}{\rho^2}-\frac\mu2+\frac{\mu D'(\rho^2)}{D'(0)}  \Big| (1+\rho^2), \ \ i=1,2. \label{eq:mtest}
	\end{align}
\end{lemma}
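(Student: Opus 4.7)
The plan is to reduce all three inequalities to two uniform two-sided asymptotic estimates on the denominators that appear, namely
\begin{align*}
D'(0)-D'(\rho^2) \asymp \frac{\rho^2}{1+\rho^2}, \qquad
D(\rho^2)-\frac{D'(\rho^2)^2\rho^2}{D'(0)} \gtrsim \frac{\rho^4}{(1+\rho^2)^2},
\end{align*}
valid uniformly in $\rho>0$, where the implicit constants depend only on $D$. Once these are in hand, the first bound is immediate by inverting, the second bound is immediate by inverting the square root, and the third bound follows by algebraic rearrangement.

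To establish the first equivalence, I will consider $f(r):=D'(0)-D'(r)$ and the quotient $h(r):=(1+r)f(r)/r$ on $(0,\8)$. Since $f(0)=0$ and $f'(0)=-D''(0)>0$ (Assumption~I and the remark after it), $h(r)\to -D''(0)\in(0,\8)$ as $r\to 0+$; since $D'$ is strictly decreasing and bounded below by $0$, $h(r)\to D'(0)-D'(\8)\in(0,\8)$ as $r\to\8$; and $h$ is continuous and strictly positive on $(0,\8)$. Hence $h$ is bounded above and below by positive constants, which gives both sides of the first equivalence. For the second estimate I set $g(r):=D(r)-D'(r)^2r/D'(0)$ and study $(1+r)^2 g(r)/r^2$. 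The limit at $0$ equals $-\tfrac32 D''(0)>0$ by \pref{le:albtd}. For the behavior as $r\to\8$, I use the pointwise bound $g(r)\ge g_1(r):=D(r)-D'(r)r$, where $g_1'(r)=-D''(r)r>0$ shows $g_1$ is strictly increasing, so $g(r)\ge g_1(1)>0$ for $r\ge 1$, whence the quotient stays bounded below by a positive constant near infinity. Combined with continuity of $g$ on $(0,\8)$, this yields the desired lower bound on $g$.

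The third bound is the main obstacle. Starting from the explicit expressions in \eqref{mean:A}, I factor $u-m_Y=\rho^2\bigl(\tfrac{u}{\rho^2}-\tfrac{\mu}{2}+\tfrac{\mu D'(\rho^2)}{D'(0)}\bigr)$ out of the numerator, so that
\[
|m_i-\mu|=\Big|\tfrac{u}{\rho^2}-\tfrac{\mu}{2}+\tfrac{\mu D'(\rho^2)}{D'(0)}\Big|\cdot \frac{\rho^2\,|c_i(\rho)|}{g(\rho^2)},
\]
where $c_2(\rho)=D'(\rho^2)-D'(0)$ and $c_1(\rho)=2D''(\rho^2)\rho^2+D'(\rho^2)-D'(0)$. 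The key observation is that, because $-D''$ is non-negative and non-increasing (since $D'''\ge 0$), one has
\[
-D''(\rho^2)\rho^2 \le \int_0^{\rho^2} -D''(s)\,\dd s = D'(0)-D'(\rho^2),
\]
which yields $|c_1(\rho)|\le 3(D'(0)-D'(\rho^2))$; together with the obvious $|c_2(\rho)|=D'(0)-D'(\rho^2)$, it reduces the remaining task to proving $\rho^2(D'(0)-D'(\rho^2))/g(\rho^2)\le C_D(1+\rho^2)$. Substituting the upper bound $D'(0)-D'(\rho^2)\le C_D\rho^2/(1+\rho^2)$ from step one and the lower bound $g(\rho^2)\ge c_D\rho^4/(1+\rho^2)^2$ from step two gives exactly the constant factor $(1+\rho^2)$, completing the proof.
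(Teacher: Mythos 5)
Your proof is correct, but it follows a genuinely different route from the paper's. The paper's proof is a two-line affair that leans on Assumption IV: from \pref{eq:asmp1} one gets $\bt^2<-2D''(0)$, which converts the bound on $1/(D'(0)-D'(\rho^2))$ into the bound on $1/\sqrt{D(\rho^2)-D'(\rho^2)^2\rho^2/D'(0)}$, and \pref{eq:asmp1}--\pref{eq:asmp2} are used again to control the prefactors $|\al\rho^2+\bt|$ and $|\bt|$ in $m_1,m_2$. You instead establish the two-sided estimate $D'(0)-D'(\rho^2)\asymp\rho^2/(1+\rho^2)$ and a direct lower bound $D(\rho^2)-D'(\rho^2)^2\rho^2/D'(0)\gtrsim\rho^4/(1+\rho^2)^2$ (via the increasing function $g_1(r)=D(r)-D'(r)r$ and \pref{le:albtd}), and you replace the use of \pref{eq:asmp2} by the elementary inequality $-D''(\rho^2)\rho^2\le D'(0)-D'(\rho^2)$, which follows from $D'''\ge0$. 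All the individual steps check out: the limits of your quotients at $0$ and $\8$ are positive and finite because $D''(0)<0$ and $D'$ is strictly decreasing under Assumption I, and the algebra in the reduction of $|m_i-\mu|$ is right. What your argument buys is that the lemma holds without invoking Assumption IV at all (only Assumptions I--II), at the cost of being somewhat longer; the paper's version is shorter but is tied to the standing hypotheses \pref{eq:asmp1}--\pref{eq:asmp2} of that section.
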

\begin{proof}
	Since $\lim_{\rho\to0+} \frac{D'(\rho^2)-D'(0)}{\rho^2}=D''(0)$ and $D'(\rho^2)$ is strictly decreasing to 0 as $\rho^2$ tends to $\8$, we have the first assertion.  By \pref{eq:asmp1}, we have
	\[
	\frac1{\sqrt{D(\rho^2)-\frac{D'(\rho^2)^2 \rho^2}{D'(0)} }}\le \frac{\sqrt{-2D''(0)}}{D'(0)-D'(\rho^2)}\le \frac{C_D(1+\rho^2)}{\rho^2} .
	\]
	Using \pref{eq:asmp1} and \pref{eq:asmp2},
	\begin{align*}
	|m_1| & \le |\mu|+ \Big|\frac{u}{\rho^2}-\frac\mu2+\frac{\mu D'(\rho^2)}{D'(0)}\Big|\frac{ C_D \rho^2}{D'(0)-D'(\rho^2)} \le |\mu|+ C_D \Big|\frac{u}{\rho^2}-\frac\mu2+\frac{\mu D'(\rho^2)}{D'(0)}  \Big| (1+\rho^2),\notag \\
	|m_2|&\le |\mu|+ \Big|\frac{u}{\rho^2}-\frac\mu2+\frac{\mu D'(\rho^2)}{D'(0)}\Big| \frac{C_D \rho^2}{D'(0)-D'(\rho^2)}\le |\mu| + C_D \Big|\frac{u}{\rho^2}-\frac\mu2+\frac{\mu D'(\rho^2)}{D'(0)}  \Big| (1+\rho^2).\qedhere
	\end{align*}
\end{proof}

Recall $z_1'=\si_1 z_1 -\si_2 z_2 + m_{1}$, $z_3'=(\si_2 z_2 + \frac{\rho\sqrt{\al \bt}z_3}{\sqrt N}-m_{2}) /\sqrt{-4D''(0)}$. Note that the conditional distribution of $z_1'$ given $z_3'=y$ is given by
\begin{align}\label{eq:z13con0}
z_1'| z_3'=y \sim N \Big (\bar \sfa, \frac{\sfb^2}{N}\Big),
\end{align}
where
\begin{align*}
 \bar \sfa&=  m_1-\frac{\si_2^2(\sqrt{-4D''(0)}y + m_2)}{\si_2^2+\frac{\al \bt \rho^2}{N}}\\
 &=\frac{-2D''(0)\al\rho^2( u-\frac{\mu\rho^2}{2}+\frac{\mu D'(\rho^2) \rho^2}{D'(0) } )}{(-2D''(0)-\bt^2) \sqrt{D(\rho^2)-\frac{D'(\rho^2)^2 \rho^2}{D'(0)} }}
   +\frac{\al\bt\rho^2 \mu }{-2D''(0)-\bt^2}\notag \\
   &\quad  -\frac{(-2D''(0)-\bt^2-\al\bt\rho^2)\sqrt{-4D''(0)} y}{-2D''(0)-\bt^2},\\
 \frac{\sfb^2}{N}& =  \si_1^2+\si_2^2-\frac{\si_2^4}{\si_2^2+\frac{\al\bt \rho^2}{N}} =\frac{-4D''(0)}{N} +\frac{2D''(0)\al^2\rho^4}{N(-2D''(0)-\bt^2)}.
\end{align*}

\begin{lemma}\label{le:exptt}
Suppose $\mu\neq 0$. 
Then
\begin{align*}
\limsup_{ T\to \8} \limsup_{N\to \8} \frac1N \log  \ez \Crt_{N} ([-T, T]^c, (0,\8)) &= -\8,\\
\limsup_{ R \to \8} \limsup_{N\to \8} \frac1N \log  \ez \Crt_{N} (\rz, (R,\8)) &= -\8,\\
\limsup_{ \eps \to 0+} \limsup_{N\to \8} \frac1N \log  \ez \Crt_{N} (\rz, (0,\eps)) &= -\8.
\end{align*}
\end{lemma}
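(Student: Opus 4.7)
My plan is to start from the Kac--Rice decomposition \pref{eq:ecnr12}--\pref{eq:ie12} and reduce the three statements to Gaussian tail estimates against an integrand that grows at most polynomially in $|u|$ and $\rho$. The first step is to apply the deterministic bound $\prod_{i=1}^{N-1}\bigl|(\tfrac{N-1}{N})^{1/2}\la_i - z_3'\bigr| \le (\la_N^* + |z_3'|)^{N-1}$ together with (i) independence of the GOE operator norm $\la_N^*$ and of $Z=(Z_1,\dots,Z_{N-1})$ from $(z_1',z_3')$, (ii) the moment bound \pref{eq:goenorm} $\ez[(\la_N^*)^k]\le C^k$, (iii) the Gaussian identity \pref{eq:absgau} applied via \pref{eq:z13con0} to bound $\ez[|z_1'|\mid z_3']$, and (iv) the size estimates on $m_1,m_2,\si_Y$ from Lemma \ref{le:apest}. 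Using $(a+b)^{N-1}\le 2^{N-2}(a^{N-1}+b^{N-1})$, this produces a schematic bound
\[
I_j(E,(R_1,R_2)) \le C^N \int_{R_1}^{R_2}\int_E P_N(|u|,\rho) \cdot \frac{e^{-(u-m_Y)^2/(2\si_Y^2)}}{\si_Y} \cdot \rho^{N-1} e^{-N\mu^2\rho^2/(2D'(0))}\, du\, d\rho,
\]
where $P_N$ is polynomial in $|u|$ and $\rho$ with degree at most linear in $N$. I would then change variables from $u$ to $v=(u-m_Y)/(\sqrt N \si_Y)$ as in \pref{eq:uvcov}, converting the $u$-Gaussian into $\sqrt{N/(2\pi)}\,e^{-Nv^2/2}\,dv$ and, via \pref{eq:m12cov}, rendering $m_1$ and $m_2$ affine in $v$ with slopes $|\al|\rho^2+|\bt|$ and $|\bt|$ that are bounded uniformly on compact $\rho$-intervals by Lemma \ref{le:dgeab}.

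With this template, each statement follows from a different dominant factor. For the large-$\rho$ limit, the factor $e^{-N\mu^2\rho^2/(2D'(0))}$ (where $\mu\neq 0$ is essential) beats $\rho^{N-1}$ and every polynomial-in-$\rho$ contribution, producing a rate that tends to $-\8$ as $R\to\8$. For the small-$\rho$ limit, the factor $\rho^{N-1}$ alone yields $\frac1N\log\rho^{N-1}\to-\8$ as $\eps\to 0+$. For the large-$|u|$ limit, I would split $\rho\in(0,\8)$ into $(0,R]$ and $(R,\8)$: on the unbounded piece the second statement gives the decay (letting $R\to\8$ after $T\to\8$), while on $(0,R]$ the condition $|u|>T$ forces $|v|\gtrsim T/C_R$, since $\sqrt N\si_Y$ is $O_R(1)$ on bounded $\rho$ by Lemma \ref{le:apest}, and the Gaussian $e^{-Nv^2/2}$ defeats $(C+C|v|)^N$ uniformly, producing a rate $\le -c T^2+ C\log(1+T)\to-\8$ as $T\to\8$.

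The main obstacle is the non-uniform behavior of $\si_Y$ in $\rho$: since $D(\rho^2)$ may diverge as $\rho\to\8$, the Gaussian control on $u$ degenerates for large $\rho$, so one cannot argue directly in the original variables. The change to $v$-coordinates resolves this by making the exponential weight scale-free in $\rho$, but one then has to verify that the conditional mean $\bar\sfa$ and standard deviation $\sfb/\sqrt N$ from \pref{eq:z13con0} remain polynomially controlled in $|v|$ and $\rho$ uniformly in $N$, which uses \pref{eq:asmp1} and \pref{eq:asmp2} (i.e.\ Assumption IV) together with Lemma \ref{le:apest}. The remainder is straightforward exponent optimization and routine bookkeeping of the Stirling factor $\frac1N\log(S_{N-1}N^{(N-1)/2})$ from \pref{eq:snlim}.
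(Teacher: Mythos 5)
Your proposal is correct, and for the second and third claims it coincides with the paper's argument: the same deterministic bound $\prod_i|(\frac{N-1}{N})^{1/2}\la_i-z_3'|\le(\la_{N-1}^*+|z_3'|)^{N-1}$, conditioning via \pref{eq:z13con0} and \pref{eq:absgau}, the change of variables \pref{eq:uvcov}--\pref{eq:m12cov} so that $1+|m_1|^N+|m_2|^N\le C^N(1+|v|^N C^N)$ is integrable against $e^{-Nv^2/2}$ uniformly in $\rho$ (here Assumption IV gives the uniform bounds on $|\al\rho^2|$ and $|\bt|$), leaving $\int_R^\8 e^{-N\mu^2\rho^2/(2D'(0))}\rho^{N-1}\dd\rho$ and $\int_0^\eps\rho^{N-1}\dd\rho$ to produce the two rates. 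Where you genuinely diverge is the first claim. The paper keeps the full range $\rho\in(0,\8)$, substitutes $u=\rho^2 s$, uses $D(\rho^2)-D'(\rho^2)^2\rho^2/D'(0)\le D'(0)\rho^2$ to convert the $u$-Gaussian into $\exp(-N[(s-\frac\mu2+\frac{\mu D'(\rho^2)}{D'(0)})^2+\mu^2]\rho^2/(2D'(0)))$, and extracts the decay in $T$ from the domain restriction $\rho^2>T/|s|$; this forces a case analysis ($s>|\mu|$, $0<s\le|\mu|$, $s<0$) and repeated Gaussian tail estimates. You instead split $(0,\8)=(0,R]\cup(R,\8)$, dispose of the unbounded piece by the second claim (and since the resulting bound $f(R)$ is independent of $T$ while $f(R)\to-\8$, the order of limits is harmless), and on $(0,R]$ observe that $|u|>T$ together with $\sqrt N\si_Y\le C_R$ and $|m_Y|\le M_R$ forces $|v|\ge T/(2C_R)$, whence $\sup_{|v|\ge t}|v|^Ne^{-Nv^2/2}$ gives the rate $\log t-t^2/2\to-\8$. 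Your route is more modular and avoids the paper's hard analysis for claim one; the paper's route has the minor advantage of yielding an explicit exponential-in-$NT$ bound on $\ez\Crt_N([-T,T]^c,(0,\8))$ valid over the whole half-line at once, which is what they record and reuse in the remark following the lemma (the strengthened statements for $I_1$ and $I_2$ separately). If you intend your version to support that remark as well, you should note that your domain-splitting argument applies verbatim to $I_1$ and $I_2$ individually, which it does.
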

\begin{proof}
(1) Note that $\sfb^2\le -4D''(0)$ and that
\begin{align}\label{eq:z3mom}
\ez [|z_3'|^{N-1}] \le C^{N-1}\Big[ m_2^{N-1}+\Big(\frac{-2D''(0)-\bt^2}{-4N D''(0)}\Big)^{\frac{N-1}{2}}\Big]\le C^{N-1}(1+m_2^{N-1}).
\end{align}
We write $m_u=|\mu|+ C_D |\frac{u}{\rho^2}-\frac\mu2+\frac{\mu D'(\rho^2)}{D'(0)}  | (1+\rho^2)$. Using the conditional distribution \pref{eq:z13con0}, \pref{eq:absgau}, \pref{eq:goenorm}, \pref{le:apest} and the elementary fact $m_u\le\max\{ 1, m_u^N\}$,
\begin{align*}
& \ez \Big[|z_1'| \prod_{i=1}^{N-1} |(\frac{N-1}{N})^{1/2}\la_i-z_3'| \Big] \\
&=\int_\rz \ez \Big[|z_1'| \prod_{i=1}^{N-1} |(\frac{N-1}{N})^{1/2}\la_i-y| \Big|z_3'=y\Big]  \frac{\sqrt {-4N D''(0)} \exp\{-\frac{N(\sqrt{ -4D''(0)}y+m_2)^2}{2(-2D''(0)-\bt^2)}\}} {\sqrt{2\pi(-2D''(0)-\bt^2)} } \dd y\\
 &\le \int_\rz \Big( \frac{\sqrt 2 \sfb}{\sqrt{\pi N}}+|\bar a| \Big) \ez(\la_{N-1}^*+|y|)^{N-1} \frac{\sqrt {-4N D''(0)} \exp\{-\frac{N(\sqrt{ -4D''(0)}y+m_2)^2}{2(-2D''(0)-\bt^2)}\}} {\sqrt{2\pi(-2D''(0)-\bt^2)} } \dd y\\
 &\le C^{N-1}\ez [ (\sfb+ |m_1|+|m_2| +\sqrt{-4D''(0)}|z_3'|) ({\la_{N-1}^*}^{N-1}+|z_3'|^{N-1})]\\
 &\le C_D^N(1+m_u^{N}),
\end{align*}
where $\la_{N-1}^*$ is the operator norm of $\GOE_{N-1}$. Similarly,
\begin{align*}
\ez\Big[Z_i^2 \prod_{j\neq i, 1\le j\le N-1} |(\frac{N-1}{N})^{1/2}\la_j-z_3'| \Big ] \le \ez (\la_{N-1}^*+|z_3'|)^{N-2}\le  C^N(1+|m_2|^{N-2}).
\end{align*}
Since $D(r)\le D'(0) r$, we have
$
D(\rho^2)-\frac{D'(\rho^2)^2 \rho^2}{D'(0)}\le D'(0)\rho^2.
$
Together with \pref{le:apest}, we obtain after a change of variable $u= \rho^2 s$,
\begin{align*}
& \ez \Crt_{N} ([-T, T]^c, (0,\8)) \\
&\le   C_D^N S_{N-1}\int_{\rz_+}  \int_{[-T,T]^c} (1 +m_u^N)  \frac1{\sqrt{2\pi}\si_Y} e^{-\frac{(u-m_Y)^2}{2\si_Y^2}} \frac1{(2\pi)^{N/2} D'(0)^{N/2}} e^{-\frac{N \mu^2 \rho^2}{2D'(0)}} \rho^{N-1} \dd u  \dd\rho \\
&\le  C_{\mu,D}^N S_{N-1} \int_{\rz_+}  \int_{[-{T/\rho^2},{T/\rho^2}]^c} \Big[1 + (1+\rho^{2N})|s-\frac{\mu}2+\frac{\mu D'(\rho^2)}{D'(0)} |^{N}\Big]   \\
&\ \  \frac{\sqrt N}{\sqrt{2\pi} \sqrt{D(\rho^2)-\frac{D'(\rho^2)^2 \rho^2}{D'(0)} }} \exp\Big(- \frac{N \rho^4 (s - \frac{\mu}2+\frac{\mu D'(\rho^2)}{D'(0)})^2}{2(D(\rho^2)-\frac{D'(\rho^2)^2 \rho^2}{D'(0)})}\Big) \frac1{(2\pi)^{N/2} D'(0)^{N/2}} e^{-\frac{N \mu^2 \rho^2}{2D'(0)}} \rho^{N+1}  \dd s \dd\rho   \\
&\le   \frac{C_{\mu,D}^N S_{N-1} \sqrt N}{ (2\pi)^{\frac{N+1}{2}}D'(0)^{N/2}} \Big(\int_{0}^\8  \int_{\sqrt{T/s}}^\8 + \int_{-\8}^0  \int_{\sqrt{-T/s}}^\8\Big ) [1+(1+\rho^{2N})(|s|+|\mu|)^{N}  ]  \\
& \ \ \ \ \ \frac{ (1+\rho^2)}{ \rho^2}   \exp\Big(-\frac{N[(s - \frac{\mu}2+\frac{\mu D'(\rho^2)}{D'(0)})^2+\mu^2] \rho^2}{2D'(0)}\Big)  \rho^{N+1}   \dd\rho \dd s.
\end{align*}
We need to find a good lower bound for $(s - \frac{\mu}2+\frac{\mu D'(\rho^2)}{D'(0)})^2$. To save space, let
\[
f(s,\rho^2)=[1+(1+\rho^{2N})(|s|+|\mu|)^{N}  ]  (\rho^{N-1}+\rho^{N+1})\exp\Big(-\frac{N[(s - \frac{\mu}2+\frac{\mu D'(\rho^2)}{D'(0)})^2+\mu^2] \rho^2}{2D'(0)}\Big).
\]
We will use the estimate $\int_{x}^\8 e^{-\frac{y^2}{2\si^2}} \dd y\le \frac{\si^2}x e^{-\frac{x^2}{2\si^2}}$ repeatedly in the following.

\emph{Case 1}: $s>0$. If $s>|\mu|$,  since $|\frac12-\frac{D'(\rho^2)}{D'(0)}|\le\frac12 $, we have
\[
\Big(s - \frac{\mu}2+\frac{\mu D'(\rho^2)}{D'(0)}\Big)^2\ge \Big(s- \Big|\frac12-\frac{D'(\rho^2)}{D'(0)}\Big| |\mu|\Big )^2\ge \frac{s^2}4.
\]
Then
\begin{align*}
  &\int_{|\mu|}^{\8} \int_{  \sqrt{T/s} }^{\8} f(s,\rho^2) \dd \rho \dd s\\
  &\le \int_{|\mu|}^\8  \int_{\sqrt{T/s}}^\8 [1+(s+|\mu|)^{N} +(s+|\mu|)^{N} \rho^{2N}] (\rho^{N-1}+\rho^{N+1}) e^{-\frac{N[\frac{s^2}4+\mu^2]\rho^2}{2D'(0)}} \dd \rho \dd s\\
  &\le C_{\mu,D}\Big( \frac{8D'(0)}{5\mu^2}\Big)^{N+1} \int_{|\mu| }^\8  \int_{\sqrt{\frac{T}{2D'(0)}(\frac{s}4+\frac{\mu^2}{s})} }^\8 \Big(\frac{2D'(0)}{\frac{s^2}4+\mu^2} \Big)^{(N-1)/2} (1+(s+|\mu|)^{N})r^{3N+1} e^{-Nr^2} \dd r \dd s\\
  &\le  \frac{ C_{\mu,D}^N}{N  \sqrt{T}} \int_{|\mu|}^\8  \frac{1+(s+|\mu|)^{N}} {(s^2+4\mu^2)^{(N-1)/2}} e^{-\frac{N T}{4D'(0)}(\frac{s}{4}+\frac{\mu^2}{s})}  \dd s\\
  &\le  \frac{ C_{\mu,D}^N}{N  \sqrt{T}} \int_{|\mu|}^\8  \frac{1} {s^2 } e^{-\frac{N Ts }{32 D'(0)}}   \dd s \le \frac{C_{\mu,D}^N}{N \sqrt{T}} e^{-\frac{|\mu| N T }{32 D'(0)}}.
\end{align*}
Here we have used the fact that $\sqrt{\frac{T}{2D'(0)}(\frac{s}4+\frac{\mu^2}{s})}\ge |\mu|\sqrt{\frac{T}{2D'(0)}}$ so that we can always choose $T$ large to guarantee $r>1$ and $r^4\le e^{r^2/2}$.

If $s\le |\mu|$, using the trivial bound $(s - \frac{\mu}2+\frac{\mu D'(\rho^2)}{D'(0)})^2\ge0$, we have
\begin{align*}
  &\int_{0}^{|\mu|}  \int_{\sqrt{T/s}}^{\8 } f(s,\rho^2) \dd \rho \dd s\\
  &\le \int_{0}^{|\mu|}  \int_{\sqrt{T/|\mu| }}^\8 [1+(s+|\mu|)^{N} +(s+|\mu|)^{N} \rho^{2N}] (\rho^{N-1}+\rho^{N+1}) e^{-\frac{N \mu^2\rho^2}{2D'(0)}} \dd \rho \dd s\\
  &\le C_D \int_{0}^{|\mu|}  \int_{ \sqrt{\frac{|\mu|T}{2  D'(0)}}}^\8 \Big[ \Big(\frac{2D'(0)}{\mu^2}\Big)^{3N/2}+1\Big] (1+(s+|\mu|)^{N})r^{3N+1} e^{-Nr^2} \dd r \dd s\\
  &\le \frac{C_{\mu,D}^N  }{N \sqrt{  T}} e^{-\frac{|\mu| N T}{4 D'(0)}}.
\end{align*}

\emph{Case 2}: $s<0$. After change of variable $s\to -s$, we can proceed in the same way as the case $s>0$ and find
\begin{align*}
  &\int_{-\8}^0\int_{\sqrt{-T/s}}^{\8}f(s,\rho^2)\dd \rho \dd s  =\int_{0}^\8 \int_{\sqrt{T/s}}^{\8}f(-s,\rho^2) \dd \rho \dd s\\
  &=\Big(\int_{0}^{|\mu|}\int_{\sqrt{T/s}}^{\8} + \int_{|\mu|}^{\8}\int_{\sqrt{T/s}}^{\8}  \Big) f(-s,\rho^2) \dd \rho \dd s\\
  &\le \frac{C_{\mu,D}^N}{N \sqrt T}\Big(   e^{-| \mu| NT/[32D'(0)]}+  e^{-|\mu|NT/[4 D'(0)]} \Big).
\end{align*}
Putting things together, we see that
\[
\ez \Crt_{N} ([-T, T]^c, (0,\8)) \le  \frac{C_{\mu,D}^N}{N \sqrt T}\Big(   e^{-| \mu| NT/[32D'(0)]}+  e^{-|\mu|NT/[4 D'(0)]} \Big).
\]
From here the first assertion follows.

(2) The last two claims follow somewhat different strategy. By conditioning and Young's inequality,
\begin{align*}
 &\ez \Big[|z_1'| \prod_{i=1}^{N-1} |(\frac{N-1}{N})^{1/2}\la_i-z_3'| \Big]\\
 &\le C^{N-1}\ez [ (\sfb+ |m_1|+|m_2| +\sqrt{-4D''(0)}|z_3'|) ({\la_{N-1}^*}^{N-1}+|z_3'|^{N-1})]\\
 &\le C_D^N(1+|m_1|^N+|m_2|^N).
\end{align*}
Using \pref{le:apest}, \pref{eq:asmp1} and \pref{eq:asmp2} together with the change of variable formulas \pref{eq:uvcov} and \pref{eq:m12cov},
\begin{align*}
& \ez \Crt_{N} (\rz, (R,\8)) \\
&\le   C_D^N S_{N-1}\int_{R}^\8  \int_{\rz} (1 +|m_1|^N+|m_2|^N)  \frac{e^{-\frac{(u-m_Y)^2}{2\si_Y^2}}}{\sqrt{2\pi}\si_Y}  \frac{e^{-\frac{N \mu^2 \rho^2}{2D'(0)}} }{(2\pi)^{N/2} D'(0)^{N/2}} \rho^{N-1} \dd u  \dd\rho \\
&\le  C_{\mu,D}^N S_{N-1} \int_{R}^\8  \int_{\rz} [1 +  |v|^{N}(\al\rho^2+\bt)^N]    \frac{e^{-\frac{N v^2}{2}}}{\sqrt{2\pi}}  \frac{e^{-\frac{N \mu^2 \rho^2}{2D'(0)}} }{(2\pi)^{N/2} D'(0)^{N/2}} \rho^{N-1} \dd v  \dd\rho  \\
&\le   C_{\mu,D}^N S_{N-1} \int_{R}^{\8} e^{-\frac{N \mu^2 \rho^2}{2D'(0)}} \rho^{N-1} \dd\rho\\
&\le \frac{C_{\mu,D}^N S_{N-1}}{NR} e^{-\frac{N \mu^2 R^2}{4D'(0)}}
\end{align*}
for $R$ large enough. Similarly,
\begin{align*}
& \ez \Crt_{N} (\rz, (0,\eps)) \le   C_{\mu,D}^N S_{N-1} \int_{0}^{\eps} e^{-\frac{N \mu^2 \rho^2}{2D'(0)}} \rho^{N-1} \dd\rho\le \frac{C_{\mu,D}^N S_{N-1}\eps^N}{N} .
\end{align*}
This completes the proof.
\end{proof}

We remark that we have actually proved the following stronger results with heavier notations from \pref{eq:ie12}:
\begin{align*}
\limsup_{ T\to \8} \limsup_{N\to \8} \frac1N \log  [I_1([-T, T]^c, (0,\8)) +I_2([-T, T]^c, (0,\8)) ]&= -\8,\\
\limsup_{ R \to \8} \limsup_{N\to \8} \frac1N \log  [I_1 (\rz, (R,\8))+I_2 (\rz, (R,\8))] &= -\8,\\
\limsup_{ \eps \to 0+} \limsup_{N\to \8} \frac1N \log  [I_1 (\rz, (0,\eps))+I_2 (\rz, (0,\eps))] &= -\8.
\end{align*}
The third claim also holds for $\mu=0$ with the same argument.
If $\mu=0$, observing the complexity function in \pref{se:whole}, it is reasonable to require $R_2<\8$.
\begin{lemma}\label{le:exptt2}
Let $\mu=0$ and $R<\8$. Then
\begin{align*}
\limsup_{ T\to \8} \limsup_{N\to \8} \frac1N \log  \ez \Crt_{N} ([-T, T]^c, [0,R)) &= -\8.
\end{align*}
\end{lemma}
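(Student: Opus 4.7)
The plan is to adapt the second half of the proof of \pref{le:exptt} to the $\mu=0$ case. When $\mu=0$ the factor $e^{-N\mu^2\rho^2/(2D'(0))}$ that drove the exponential decay there disappears; however, the constraint $\rho<R<\8$ forces $N\si_Y^2=D(\rho^2)-D'(\rho^2)^2\rho^2/D'(0)\le D'(0)R^2$ (using $D(r)\le D'(0)r$), so that the Gaussian density $\frac{1}{\sqrt{2\pi}\si_Y}e^{-u^2/(2\si_Y^2)}$ (note that $m_Y=0$) concentrates on a scale of order $N^{-1/2}$ around the origin, providing the required decay for $|u|>T$.

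Starting from the decomposition \pref{eq:ecnr12}-\pref{eq:ie12}, I would first re-use the conditional-on-$z_3'$ estimate from the proof of \pref{le:exptt}(2),
\begin{align*}
\ez\Bigl[|z_1'|\prod_{i=1}^{N-1}\bigl|(\tfrac{N-1}{N})^{1/2}\lambda_i-z_3'\bigr|\Bigr]\le C_D^N(1+|m_1|^N+|m_2|^N),
\end{align*}
and the analogous bound for the $Z_i^2$ terms. Performing the change of variables \pref{eq:uvcov}-\pref{eq:m12cov}, with $\mu=0$ we have $m_1=v(\alpha\rho^2+\beta)$ and $m_2=v\beta$. Since $\rho\in(0,R)$ lies in a fixed bounded interval, \pref{le:albtd} together with the continuity of $\alpha,\beta$ on $(0,R]$ (valid by Assumptions I and IV, since $D(\rho^2)-D'(\rho^2)^2\rho^2/D'(0)>0$ for $\rho>0$) yields a constant $K_{D,R}$ with $|\alpha\rho^2+\beta|+|\beta|\le K_{D,R}$, so that $|m_i|\le K_{D,R}|v|$.

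Under this change of variable, $|u|>T$ corresponds to $|v|>T/(\sqrt N\,\si_Y)$, which by the $\si_Y$ bound above is implied by $|v|>\tau:=T/(\sqrt{D'(0)}R)$. The problem thus reduces to controlling
\begin{align*}
\int_{|v|>\tau}(1+|v|^N)\sqrt N\,e^{-Nv^2/2}\,dv,
\end{align*}
and for $\tau\ge\sqrt 2$ the elementary inequality $\log v\le v^2/4$ yields $v^Ne^{-Nv^2/2}\le e^{-Nv^2/4}$, so a Mills-type estimate bounds the integral by $C\,e^{-N\tau^2/4}/(\sqrt N\,\tau)$.

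Combining this tail bound with $\int_0^R\rho^{N-1}d\rho=R^N/N$, the prefactor $(2\pi D'(0))^{-N/2}$ from the density of $\nabla H_N$, and the Stirling asymptotics \pref{eq:snlim} for $S_{N-1}N^{(N-1)/2}$, we obtain
\begin{align*}
\limsup_{N\to\8}\frac1N\log\ez\Crt_N([-T,T]^c,[0,R))\le C_{D,R}-\frac{T^2}{4D'(0)R^2}
\end{align*}
for all $T$ sufficiently large, and sending $T\to\8$ yields the claim. Conceptually the only change from \pref{le:exptt} is that the $\rho$-dependent exponential decay of the previous proof is replaced by a uniform decay in $\rho$ coming from the upper bound $R$; this places the entire burden of the estimate on the conditional Gaussian density of $u$ given $x$, which is the only nontrivial point one has to check carefully.
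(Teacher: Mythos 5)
Your argument is correct and follows essentially the same route as the paper's: bound the conditional determinant expectation by $C_D^N(1+|m_1|^N+|m_2|^N)$, use $N\si_Y^2\le D'(0)\rho^2\le D'(0)R^2$ so that $|u|>T$ forces the relevant Gaussian variable into its tail, and conclude with a bound of order $e^{-NT^2/(4D'(0)R^2)}$. The paper carries out the same estimate directly in the $u$ variable (bounding the density via $\si_Y^{-1}$ and $e^{-Nu^2/(2D'(0)\rho^2)}$) rather than through the normalization \pref{eq:uvcov}, but this is only a cosmetic difference.
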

\begin{proof}
The argument follows that of \pref{le:exptt} and is actually much easier. Indeed, we find
\begin{align*}
& \ez \Crt_{N} ([-T, T]^c, (0,R) ) \\
&\le   C_D^N S_{N-1}\int_{0}^{R} \int_{[-T,T]^c} (1 +m_u^N)  \frac1{\sqrt{2\pi}\si_Y} e^{-\frac{(u-m_Y)^2}{2\si_Y^2}} \frac1{(2\pi)^{N/2} D'(0)^{N/2}}  \rho^{N-1} \dd u  \dd\rho \\
&\le \frac{C_{D}^N S_{N-1} \sqrt N}{ (2\pi)^{\frac{N+1}{2}}D'(0)^{N/2}}  \int_{0}^R  \Big( \int_{T}^\8 +    \int_{-\8}^{-T} \Big ) [1+\rho^N+ (1+\rho^{2N})|u|^{N}  ]    \frac{ (1+\rho^2)}{ \rho^2}   e^{-\frac{N u^2 }{2D'(0) \rho^2}}  \rho^{N-1} \dd u  \dd\rho \\
&\le \frac{C_{R, D}^N S_{N-1}  \sqrt N}{  T }  e^{-\frac{N T^2}{ 4D'(0)  R^2}}.
\end{align*}
The proof is complete.
\end{proof}

We need the following fact.
\begin{lemma}\label{le:intbd}
Suppose $|\mu| +\frac1R>0$. Then for any $a>0,c>0,b,d\in\rz$ satisfying $aN+b<cN+d$, there exist constants $C_{\mu,D, a,b,c,d}>0, N_0>0$ such that for all $N>N_0$,
\begin{align*}
\int_0^{R}\int_{-\8}^\8  (1+|s|^{aN+b}) \exp\Big(-\frac{N(s^{2}+\mu^{2})
	\rho^{2}}{2D'(0)}\Big)\rho^{cN+d}\dd s\dd\rho \le C^N_{\mu,R, D, a,b,c,d}.
\end{align*}
\end{lemma}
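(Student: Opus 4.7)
The plan is to evaluate the inner $s$-integral exactly via the standard Gaussian moment identity, apply Stirling's formula to extract the dominant $N$-dependence, and then use the hypothesis $aN+b<cN+d$ to ensure integrability in $\rho$ near the origin.

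First, using $\int_\rz |s|^k e^{-\alpha s^2}\dd s=\Gamma(\frac{k+1}{2})\alpha^{-(k+1)/2}$ with $\alpha=\frac{N\rho^2}{2D'(0)}$ and $k=aN+b$, together with the trivial Gaussian integral for the constant term, the inner integral equals
$$\sqrt{\frac{2\pi D'(0)}{N\rho^2}}+\Gamma\Big(\frac{aN+b+1}{2}\Big)\Big(\frac{2D'(0)}{N\rho^2}\Big)^{(aN+b+1)/2}.$$
Stirling gives $\Gamma(\frac{aN+b+1}{2})\le C_{a,b}^N N^{(aN+b)/2}$ for $N\ge N_0$, so the inner integral is bounded by $C_{a,b,D}^N(\rho^{-1}+\rho^{-(aN+b+1)})$ after absorbing polynomial-in-$N$ factors into the exponential constant.

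Substituting this back reduces the claim to the estimate
$$C_{a,b,D}^N\int_0^R\Big(\rho^{cN+d-1}+\rho^{(c-a)N+d-b-1}\Big)e^{-\frac{N\mu^2\rho^2}{2D'(0)}}\dd\rho\le C_{\mu,R,D,a,b,c,d}^N.$$
The hypothesis $aN+b<cN+d$ precisely yields $(c-a)N+d-b-1>-1$, and $c>0$ gives $cN+d-1>-1$ for $N$ large, so both integrands are integrable at $\rho=0$. To control large $\rho$ I will split into two cases. If $\mu\neq 0$, extend the integration to $(0,\infty)$ and substitute $t=\rho|\mu|\sqrt{N/D'(0)}$, which turns each integral into a constant multiple of $(D'(0)/(N\mu^2))^{q/2}\Gamma(q/2)$ with $q=cN+d$ or $q=(c-a)N+d-b$; a second application of Stirling to $\Gamma(q/2)$ cancels the $N^{q/2}$ in the denominator and leaves an exponential bound $C_{\mu,D,a,b,c,d}^N$. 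If $\mu=0$, the hypothesis $|\mu|+\tfrac1R>0$ forces $R<\infty$; then the $\rho$-integral is elementary, giving $R^{(c-a)N+d-b}/((c-a)N+d-b)$ and an analogous bound for the other term, each $\le C_R^N$.

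The only obstacle is careful bookkeeping of Stirling prefactors: at each step polynomial-in-$N$ factors are absorbed into the exponential constant $C^N$ by enlarging $C$ slightly, and the algebra checks that the two applications of Stirling (one for the $s$-integral, one for the $\rho$-integral in the $\mu\neq 0$ case) combine to cancel all $N^{(\cdot)N}$ growth. No deeper conceptual difficulty arises.
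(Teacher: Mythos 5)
Your proof is correct and follows essentially the same route as the paper's: Fubini, exact evaluation of the Gaussian moment integrals, Stirling to absorb the $\Gamma$-factors into $C^N$, and the hypothesis $aN+b<cN+d$ to guarantee integrability (which you use at $\rho=0$ after integrating in $s$ first, whereas the paper, in the case $\mu\neq 0$, integrates in $\rho$ first and uses the same hypothesis to control the $s$-integral at infinity — a cosmetic difference). The $\mu=0$ case is handled identically in both arguments.
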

\begin{proof}
If $\mu\neq 0$, changing the order of integration yields
\begin{align*}
&\int_0^{\8}\int_{\rz} (1+|s|^{aN+b}) \exp\Big(-\frac{N(s^{2}+\mu^{2}) \rho^{2}}{2D'(0)}\Big)\rho^{cN+d} \dd s\dd\rho\\
&= \int_{-\8}^\8 \int_0^\8 \Big(	\frac{D'(0)}{s^2+\mu^2}\Big)^{\frac{cN+d+1}{2}}( 1+|s|^{aN+b} ) r^{cN+d} e^{-Nr^2/2} \dd r\dd s\\
&\le C_{D,c,d}^N \int_{-\8}^\8 \frac{1+|s|^{aN+b}}{(s^2+\mu^2)^{\frac{cN+d+1}{2}}} \dd s\le C_{\mu,D,c,d}^N,
\end{align*}
where  in the last step we used the assumption $aN+b<cN+d$.  If $\mu=0$, then $R<\8$ and we have
\begin{align*}\
\int_0^{R}\int_{-\8}^\8  (1+|s|^{aN+b}) \exp\Big(-\frac{Ns^2
	\rho^{2}}{2D'(0)}\Big)\rho^{cN+d}\dd s \dd\rho \le   C^N_{a, b,D} \int_0^{R} (1+\rho^{-aN-b}) \rho^{cN+d} \dd\rho,
\end{align*}
which completes the proof.
\end{proof}
To save space, for an event $\Delta$ that may depend on the eigenvalues of GOE and other Gaussian random variables in question,  let us write
\begin{align*}
I_2(E,(R_1,R_2),\Delta) &= \frac{[-4D''(0)]^{\frac{N}{2}} }{2N} \sum_{i=1}^{N-1}\int_{R_1}^{R_2} \int_{E} \ez\Big[Z_i^2 \prod_{j\neq i} |(\frac{N-1}{N})^{1/2}\la_j-z_3'| \indi_\Delta \Big]  \\
  &\frac{ e^{-\frac{(u-m_Y)^2}{2\si_Y^2}}}{\sqrt{2\pi}\si_Y} \frac{e^{-\frac{N \mu^2 \rho^2}{2D'(0)}}}{(2\pi)^{N/2} D'(0)^{N/2}}  \rho^{N-1}   \dd u  \dd\rho.
\end{align*}
\begin{lemma}\label{le:goecpt}
Suppose $|\mu| +\frac1{R_2}>0$.  Then
\begin{align*}
&\limsup_{K\to\8} \limsup_{N \to\8} \frac1N\log I_2(E, (R_1,R_2), \{\la_{N-1}^*>K\} )=-\8,\\
&\limsup_{K\to\8} \limsup_{N \to\8} \frac1N\log I_2(E, (R_1,R_2), \{|z_3'-\ez(z_3')|>K\} )=-\8.
\end{align*}
\end{lemma}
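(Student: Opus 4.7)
The plan is to prove a single uniform bound of the form
\[
I_2(E,(R_1,R_2),\Delta)\le C_{\mu,R_2,D}^{N}\,\pz(\Delta)^{1/2},
\]
and then to plug in the two specific small-probability events. Since in both cases the indicator is a function of the $\la_j$'s and $z_3'$ (hence independent of $Z_i$), I first factor out $\ez[Z_i^2]=1$, then dominate the product of eigenvalue factors by $\prod_{j\neq i}|\sqrt{(N-1)/N}\la_j-z_3'|\le (\la_{N-1}^*+|z_3'|)^{N-2}$, and apply Cauchy--Schwarz:
\[
\ez\bigl[(\la_{N-1}^*+|z_3'|)^{N-2}\indi_\Delta\bigr]\le \ez\bigl[(\la_{N-1}^*+|z_3'|)^{2(N-2)}\bigr]^{1/2}\pz(\Delta)^{1/2}.
\]
The first factor is at most $C^N(1+|m_2|^N)$ by $(a+b)^{k}\le 2^k(a^k+b^k)$, the GOE moment bound \pref{eq:goenorm}, and the Gaussian moment estimate \pref{eq:z3mom} from \pref{le:exptt}.

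Plugging this into the definition of $I_2$ leaves a $(u,\rho)$-integral of the same shape as those handled in \pref{le:exptt}. After the change of variable $u=\rho^2 s$ and the bound on $|m_2|$ from \pref{le:apest}, the Gaussian factor $e^{-(u-m_Y)^2/(2\sigma_Y^2)}$ and the weight $e^{-N\mu^2\rho^2/(2D'(0))}\rho^{N-1}$ combine to give, via \pref{le:intbd}, a bound of order $C_{\mu,R_2,D}^N$. Together with the prefactor $[-4D''(0)]^{N/2}(N-1)/(2N)$ this confirms the claimed inequality $I_2(E,(R_1,R_2),\Delta)\le C_{\mu,R_2,D}^N\,\pz(\Delta)^{1/2}$.

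It then remains to estimate $\pz(\Delta)^{1/2}$ in each case. For $\Delta=\{\la_{N-1}^*>K\}$, the tail estimate \pref{eq:ladein} gives $\pz(\Delta)\le e^{-(N-1)K^2/9}$ once $K$ is large enough, so $\frac1N\log I_2\le \log C_{\mu,R_2,D}-K^2/18$ which tends to $-\8$ as $K\to\8$. For $\Delta=\{|z_3'-\ez z_3'|>K\}$, I would compute the variance of $z_3'$ directly from its definition together with $\sigma_2^2=(-2D''(0)-(\al\rho^2+\bt)\bt)/N$; the $\al\rho^2\bt$ term inside $\sigma_2^2$ cancels the independent contribution $\al\bt\rho^2/N$ from the $z_3$ piece, leaving the clean identity
\[
\Var(z_3')=\frac{-2D''(0)-\bt^2}{-4D''(0)\,N}\le \frac{1}{2N}.
\]
The standard Gaussian tail bound then gives $\pz(\Delta)^{1/2}\le \sqrt 2\,e^{-NK^2/2}$, and the same uniform bound closes the argument.

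The main point that requires care is this variance cancellation: only because the rate $K^2/2$ is uniform in $\rho$ does the conclusion survive the case $R_2=\8$ when $\mu\neq 0$. Everything else is bookkeeping in the spirit of \pref{le:exptt}, using the fact that $Z_i$, the eigenvalues of $\GOE_{N-1}$, and $z_3'$ are mutually independent (the Gaussian $\xi$ is isotropic, so the rotation $V$ diagonalizing $G_{**}$ does not couple $Z=V\xi$ to the spectrum).
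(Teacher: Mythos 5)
Your proof is correct and follows essentially the same route as the paper: bound the eigenvalue product by $(\la_{N-1}^*+|z_3'|)^{N-2}$, extract an exponentially small factor from the rare event, control the moment by $C^N(1+|m_2|^N)$ via \pref{eq:goenorm} and \pref{eq:z3mom}, and finish the $(u,\rho)$-integral with \pref{le:apest} and \pref{le:intbd}; your variance identity $\Var(z_3')=\frac{-2D''(0)-\bt^2}{-4D''(0)N}\le\frac1{2N}$ is exactly the uniform-in-$\rho$ bound the paper uses. The only cosmetic difference is that you apply Cauchy--Schwarz uniformly to both events (as the paper does in \pref{le:goecpt2}), whereas the paper handles $\{\la_{N-1}^*>K\}$ by a direct tail integration of $(\la_{N-1}^*)^{N-2}\indi\{\la_{N-1}^*>K\}$; both yield the same rate $e^{-(N-1)K^2/18}$.
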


\begin{proof}
Using \pref{eq:ladein} and choosing $K$ large so that $2t< e^{t^2/18}$ for $t\ge K$,
\begin{align}
&\ez[{(\la_{N-1}^*)}^{N-2}\indi\{\la_{N-1}^*>K\}]=\int_0^K Kt^{K-1}\pz(\la_{N-1}^*\ge K)\dd t+\int_K^\8 (N-2)t^{N-3} \pz(\la_{N-1}^*>t) \dd t \notag\\
&\le K^K e^{-(N-1)K^2/9}+ \int_K^\8 e^{-(N-1)t^2/18} \dd t \le  2 e^{-(N-1)K^2/18}.\label{eq:lantail}
\end{align}
If $\mu\neq 0$, using \pref{eq:z3mom} and \pref{le:apest}, we obtain
\begin{align*}
&I_2(E,(R_1,R_2), \{\la_{N-1}^*>K\})\le C_D^N \int_0^\8\int_\rz  \ez [((\la_{N-1}^*)^{N-2}+z_3'^{N-2})\indi\{\la_{N-1}^*>K\}]\\
& \ \ \frac1{\sqrt{2\pi}\si_Y} e^{-\frac{(u-m_Y)^2}{2\si_Y^2}}  \frac1{(2\pi)^{N/2} D'(0)^{N/2}} e^{-\frac{N \mu^2 \rho^2}{2D'(0)}} \rho^{N-1}  \dd u  \dd\rho\\
&\le C_{\mu,D}^N e^{-(N-1)K^2/18} \int_0^\8\int_\rz  \Big[ 1+  |\frac{u}{\rho^2}-\frac\mu2+\frac{\mu D'(\rho^2)}{D'(0)}  |^{N-2} (1+\rho^{2(N-2)}) \Big] \\
& \ \   \frac1{\sqrt{2\pi}\si_Y} e^{-\frac{(u-m_Y)^2}{2\si_Y^2}}  \frac1{(2\pi)^{N/2} D'(0)^{N/2}} e^{-\frac{N \mu^2 \rho^2}{2D'(0)}} \rho^{N-1}   \dd u  \dd\rho\\
&\le C_{\mu,D}^N e^{-(N-1)K^2/18} \int_0^\8\int_\rz \Big[1 +s^{N-2}(1+\rho^{2(N-1)})\Big]  \exp\Big(-\frac{N(s^{2}+\mu^{2})
	\rho^{2}}{2D'(0)}\Big)\rho^{N-1}\dd s  \dd\rho.
\end{align*}
Here in the last step we used the observation $(1+\rho^{2(N-2)})(1+\rho^2)\le 4(1+\rho^{2(N-1)}).$ The assertion then follows from \pref{le:intbd}. Similarly, note that
\[
\pz(|z_3'-\ez(z_3')|>K)\le 2e^{-\frac{N(-4D''(0))K^2}{2(-2D''(0)-\bt^2)}}\le 2 e^{-NK^2}.
\]
It follows that for $K$ large enough,
\[
\ez(|z_3'-\ez(z_3')|^{N-2}\indi\{|z_3'-\ez(z_3')|>K\}) \le 4e^{-NK^2/2}.
\]
From here we deduce that
\begin{align*}
&I_2(E,(R_1,R_2), \{|z_3'-\ez(z_3')|>K\})\le C_D^N \int_0^\8\int_\rz  \ez [({(\la_{N-1}^*)}^{N-2}+ |\ez(z_3')|^{N-2}\\
&\ \ +|z_3'-\ez(z_3')|^{N-2})\indi\{|z_3'-\ez(z_3')|>K\}]  \frac1{\sqrt{2\pi}\si_Y} e^{-\frac{(u-m_Y)^2}{2\si_Y^2}}  \frac1{(2\pi)^{N/2} D'(0)^{N/2}} e^{-\frac{N \mu^2 \rho^2}{2D'(0)}} \rho^{N-1}  \dd u  \dd\rho\\
&\le C_{\mu,D}^N e^{-NK^2/2} \int_0^\8\int_\rz  \Big[ 1+  |\frac{u}{\rho^2}-\frac\mu2+\frac{\mu D'(\rho^2)}{D'(0)}  |^{N-2} (1+\rho^{2(N-2)}) \Big]\\
&\quad \frac1{\sqrt{2\pi}\si_Y} e^{-\frac{(u-m_Y)^2}{2\si_Y^2}}  \frac1{(2\pi)^{N/2} D'(0)^{N/2}} e^{-\frac{N \mu^2 \rho^2}{2D'(0)}} \rho^{N-1}  \dd u  \dd\rho .
\end{align*}
The rest of argument is the same as above.  The case $\mu=0$ and $R_2<\8$ follows the same steps and is omitted.
\end{proof}

\begin{lemma}\label{le:goecpt2}
Suppose $|\mu| +\frac1{R_2}>0$.  Then for any $\de>0$,
\[
\limsup_{N \to\8} \frac1N\log I_2(E, (R_1,R_2), \{L(\la_{1}^{N-1})\notin B(\si_{{\rm sc}},\delta) \} )=-\8.
\]
\end{lemma}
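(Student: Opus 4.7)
The plan is to exploit the fact that the bad event $\Delta=\{L(\la_1^{N-1})\notin B(\si_{\rm sc},\de)\}$ has probability $\le e^{-cN^2}$ by the large deviation estimate \pref{eq:conineq}, which decays faster than any exponential factor $C^N$. The main task is to show that, after truncating the GOE operator norm and $z_3'$, the remaining integrand grows at most as $C^N$, so that Cauchy--Schwarz against $\pz(\De)^{1/2}\le e^{-cN^2/2}$ kills the whole contribution.

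First, fix $M>0$. By \pref{le:goecpt}, we may choose $K=K(M)$ large so that
\begin{align*}
\limsup_{N\to \8} \frac1N\log I_2(E,(R_1,R_2),\{\la_{N-1}^*>K\}\cup\{|z_3'-\ez(z_3')|>K\}) \le -M.
\end{align*}
Write $B=\{\la_{N-1}^*\le K\}\cap \{|z_3'-\ez(z_3')|\le K\}$. It remains to control $I_2(E,(R_1,R_2),\De\cap B)$. I would condition on $z_3'=y$ with $|y-\ez(z_3')|\le K$, and use independence of $Z_i$ and of $\GOE_{N-1}$ from $z_3'$. On $\{\la_{N-1}^*\le K\}$, each factor $|(\frac{N-1}{N})^{1/2}\la_j-y|$ is bounded by $K+|y|$, so by Cauchy--Schwarz and $\ez Z_i^2=1$,
\begin{align*}
\ez\Big[Z_i^2\prod_{j\neq i}\Big|\sqrt{\tfrac{N-1}{N}}\la_j-y\Big|\indi_{\De}\indi_{\{\la_{N-1}^*\le K\}}\Big]
&\le (K+|y|)^{N-2}\,\pz(\De)^{1/2}\\
&\le (K+|y|)^{N-2}\,e^{-cN^2/2}.
\end{align*}
Integrating against the (Gaussian) law of $z_3'$ restricted to $\{|z_3'-\ez(z_3')|\le K\}$ and using $|\ez(z_3')|=|m_2|/\sqrt{-4D''(0)}$ together with the bound $|m_2|\le |\mu|+C_D|\tfrac{u}{\rho^2}-\tfrac\mu2+\tfrac{\mu D'(\rho^2)}{D'(0)}|(1+\rho^2)$ from \pref{le:apest},
\begin{align*}
\ez\Big[Z_i^2\prod_{j\neq i}\Big|\sqrt{\tfrac{N-1}{N}}\la_j-z_3'\Big|\indi_{\De}\indi_{B}\Big]\le C_{K,D}^{N}\,(1+|m_2|^{N})\,e^{-cN^2/2}.
\end{align*}

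Plugging this back into the definition of $I_2(E,(R_1,R_2),\De\cap B)$, summing the $N-1$ identical terms (which costs a harmless factor $N$), and invoking the change of variable $v=(u-m_Y)/(\sqrt N\si_Y)$ from \pref{eq:uvcov}--\pref{eq:m12cov} exactly as in the proof of \pref{le:goecpt}, the $u$--integral becomes a Gaussian integral against $(1+|v|^N(\al\rho^2+\bt)^N)$, and \pref{le:intbd} yields
\begin{align*}
I_2(E,(R_1,R_2),\De\cap B)\le C_{\mu,R,D,K}^{N}\,e^{-cN^2/2}.
\end{align*}
Therefore $\limsup_{N\to\8}\frac1N\log I_2(E,(R_1,R_2),\De\cap B)=-\8$, and combined with the truncation step we get $\limsup_{N\to\8}\frac1N\log I_2(E,(R_1,R_2),\De)\le -M$. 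Since $M$ was arbitrary the conclusion follows.

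The main obstacle is not the probability estimate itself (the $e^{-cN^2}$ decay is overwhelming) but keeping the deterministic envelope under control: one must ensure that, after conditioning on $z_3'$ and applying Cauchy--Schwarz, the $(K+|z_3'|)^{N-2}$ factor produced by truncation combines with the $u$-- and $\rho$--integrations to a genuine $C^N$ bound. This is what forces one to first perform the two truncations $\{\la_{N-1}^*\le K\}$ and $\{|z_3'-\ez(z_3')|\le K\}$ supplied by \pref{le:goecpt} before applying Cauchy--Schwarz, and then to reuse the change of variable plus the integral estimate of \pref{le:intbd}, which is the same bookkeeping that already handled $|\mu|+1/R_2>0$ in the preceding lemmas.
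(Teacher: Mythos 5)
Your proof is correct and follows essentially the same route as the paper: in both arguments the key step is to play the $e^{-cN^2}$ estimate \pref{eq:conineq} for $\pz(L(\la_1^{N-1})\notin B(\si_{\rm sc},\de))$ against a $C^N(1+|m_2|^{N})$ envelope, which is then integrated out in $u$ and $\rho$ via \pref{le:apest}, the change of variables, and \pref{le:intbd}. The only difference is that the paper dispenses with your preliminary truncation of $\la_{N-1}^*$ and $z_3'$ by applying the Cauchy--Schwarz inequality directly to $\ez[((\la_{N-1}^*)^{N-2}+|z_3'|^{N-2})\indi_\De]$ with the moment bounds \pref{eq:goenorm} and \pref{eq:z3mom}, thereby obtaining $-\8$ in one shot rather than $\le -M$ for every $M$.
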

\begin{proof}
 We only argue for the harder case $\mu\neq 0$. Using \pref{eq:z3mom}, the Cauchy--Schwarz inequality and \pref{eq:conineq}, we have
\begin{align*}
 &\ez \Big[\prod_{i=1,i\neq j}^{N-1} |(\frac{N-1}{N})^{1/2}\la_i - z_3'  | \indi\{L(\la_{1}^{N-1})\notin B(\si_{{\rm
 sc}},\delta)\}\Big] \\
 &\le C^N \ez [((\la_{N-1}^*)^{N-2}+z_3'^{N-2}
 )\indi\{L(\la_{1}^{N-1})\notin B(\si_{{\rm sc}},\delta)\}
 ]\\
 &\le C^N [\ez ({(\la_{N-1}^*)}^{2(N-2)}+z_3'^{2(N-2)} )]^{1/2}
 \pz(L(\la_{1}^{N-1})\notin B(\si_{{\rm sc}},\delta))^{1/2}\\
 &\le C_{\mu,D}^{N}\Big[1  +\Big|\frac{u}{\rho^{2}}-\frac{\mu}{2}+\frac{\mu D'(\rho^{2})}{D'(0)}\Big|^{N-2}(1+\rho^{2(N-2)})\Big] e^{-\frac12c(N-1)^{2}}.
\end{align*}
Together with \pref{le:intbd}, we deduce that
\begin{align*}
  & I_2(E, (R_1,R_2), \{L(\la_{1}^{N-1})\notin B(\si_{{\rm sc}},\delta) \} ) \notag\\
 &\le C_{\mu,D}^{N} e^{-cN^{2}}
 \int_{R_{1}}^{R_{2}}\int_{E/\rho^{2}} \Big[1+\Big|s-\frac{\mu}{2}+\frac{\mu
 D'(\rho^{2})}{D'(0)}\Big|^{N-2}(1+\rho^{2(N-1)})\Big]\notag\\
 & \ \ \ \exp\Big(-\frac{N[(s-\frac{\mu}{2}
 +\frac{\mu D'(\rho^{2})}{D'(0)})^{2}+\mu^{2}]
 \rho^{2}}{2D'(0)}\Big)\rho^{N-1}\dd s\dd\rho\notag\\	
  &\le C_{\mu,D}^{N} e^{-cN^{2}}
 \int_{0}^{\8}\int_{\rz} \Big[1+|v|^{N-2}(1+\rho^{2(N-1)})\Big]  \exp\Big(-\frac{N[v^{2}+\mu^{2}]
 \rho^{2}}{2D'(0)}\Big)\rho^{N-1}\dd v\dd\rho\notag\\	
 & \le C_{\mu,D}^{N}e^{-cN^{2}}.
\end{align*}
From here the assertion follows.
\end{proof}

For an event $\Delta$,  let us write
\begin{align*}
I_1(E,(R_1,R_2),\Delta) &= [-4D''(0)]^{\frac{N-1}{2}} \int_{R_1}^{R_2} \int_{E} \ez\Big[|z_1'| \prod_{i=1}^{N-1} |(\frac{N-1}{N})^{1/2}\la_i-z_3'| \indi_\Delta\Big]\notag\\
  &\frac{ e^{-\frac{(u-m_Y)^2}{2\si_Y^2}}}{\sqrt{2\pi}\si_Y} \frac{e^{-\frac{N \mu^2 \rho^2}{2D'(0)}}}{(2\pi)^{N/2} D'(0)^{N/2}}  \rho^{N-1}   \dd u  \dd\rho.
\end{align*}
The argument in this part shares the same spirit as that for $I_2$. 

\begin{lemma}\label{le:exptti11}
  Suppose $|\mu|+\frac1{R_2}>0$. Then we have
\begin{align*}
&\limsup_{K\to\8} \limsup_{N \to\8} \frac1N\log I_1(E, (R_1,R_2), \{\la_{N-1}^*>K\} )=-\8,\\
&\limsup_{K\to\8} \limsup_{N \to\8} \frac1N\log I_1(E, (R_1,R_2), \{|z_3'-\ez(z_3')|>K\} )=-\8.
\end{align*}
\end{lemma}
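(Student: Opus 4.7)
The plan is to mirror the proof of \pref{le:goecpt} for $I_2$, but account for the extra factor $|z_1'|$ appearing in $I_1$ and the product over $N-1$ (instead of $N-2$) eigenvalue factors. The starting point is the pointwise bound
\[
|z_1'| \prod_{i=1}^{N-1} |(\tfrac{N-1}{N})^{1/2}\la_i-z_3'| \leq |z_1'|\bigl(\la_{N-1}^* + |z_3'|\bigr)^{N-1}.
\]
Since $z_1'=\si_1 z_1-\si_2 z_2 + m_1$ with $\si_i \le C_D/\sqrt N$, one has $\ez[|z_1'|^2]\le C_D(1+m_1^2)$. By Cauchy--Schwarz, for any event $\Delta$ measurable with respect to $\la_1,\dots,\la_{N-1}$, $z_2$, $z_3$,
\[
\ez\bigl[|z_1'|(\la_{N-1}^*+|z_3'|)^{N-1}\indi_\Delta\bigr] \le C_D^N(1+|m_1|)\bigl(\ez[((\la_{N-1}^*)^{2(N-1)}+|z_3'|^{2(N-1)})\indi_\Delta]\bigr)^{1/2},
\]
where we used the fact that $z_1$ is independent of everything in the product. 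This is the analog of the step in \pref{le:goecpt} where $\ez[((\la_{N-1}^*)^{N-2}+|z_3'|^{N-2})\indi_\Delta]$ appeared; the only difference is the extra factor $(1+|m_1|)$ and the doubled exponent from Cauchy--Schwarz.

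For the first claim, one takes $\Delta=\{\la_{N-1}^*>K\}$. Repeating the computation \pref{eq:lantail} with exponent $2(N-1)$ yields $\ez[(\la_{N-1}^*)^{2(N-1)}\indi\{\la_{N-1}^*>K\}]\le C^N e^{-(N-1)K^2/18}$ for $K$ large, while independence of $z_3'$ from the GOE and the moment bound \pref{eq:z3mom} give $\ez[|z_3'|^{2(N-1)}\indi\{\la_{N-1}^*>K\}]\le C^N(1+|m_2|^{2(N-1)})e^{-(N-1)K^2/9}$. Combining, the conditional expectation inside $I_1$ is bounded by $C_D^N(1+|m_1|+|m_2|^{N-1})e^{-cNK^2}$ for some $c>0$. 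Plugging in the estimate on $|m_i|$ from \pref{le:apest}, performing the change of variable $u=\rho^2 s$ used in the proof of \pref{le:exptt}, and absorbing the prefactors $(1+|m_1|+|m_2|^{N-1})$ into polynomial factors of the form $(1+|s-\mu/2+\mu D'(\rho^2)/D'(0)|^N)(1+\rho^{2N})$, we reduce to an integral of the type handled in \pref{le:intbd}, which gives a bound $C_{\mu,D}^N e^{-cNK^2}$. Letting first $N\to\infty$ and then $K\to\infty$ yields the first claim.

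For the second claim, we instead take $\Delta=\{|z_3'-\ez(z_3')|>K\}$. Since $z_3'-\ez(z_3')$ is a centered Gaussian with variance $\le C_D/N$, the Gaussian tail gives $\pz(|z_3'-\ez(z_3')|>K)\le 2e^{-NK^2}$ for $K$ large enough. As in the proof of \pref{le:goecpt}, split $|z_3'|^{2(N-1)}\le C^N(|\ez(z_3')|^{2(N-1)}+|z_3'-\ez(z_3')|^{2(N-1)})$, and use the independence of $\la_{N-1}^*$ from $z_3'$ together with \pref{eq:goenorm} to decouple the moments of $\la_{N-1}^*$ from the indicator. This gives an overall factor $e^{-cNK^2}$, and the same integration argument via \pref{le:intbd} produces a bound $C_{\mu,D}^N e^{-cNK^2}$, completing the proof. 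The main obstacle (in the sense of requiring care) is tracking the extra $|m_1|$ factor produced by $|z_1'|$; but since $|m_1|\le C_D(|m_2|+|\al\rho^2|\cdot|v|)$ with $v=(u-m_Y)/(\sqrt N \si_Y)$, it is absorbed into the same polynomial growth controlled by \pref{le:intbd}. The case $\mu=0$ with $R_2<\infty$ is handled identically, as in the proofs of \pref{le:exptt2} and \pref{le:goecpt}.
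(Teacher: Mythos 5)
Your treatment of the Gaussian and random-matrix ingredients is sound and close in spirit to the paper's: the paper decouples $z_1'$ by conditioning on $z_3'$ via \pref{eq:z13con0} and \pref{eq:absgau} (producing the factor $\sfb/\sqrt N+|\bar\sfa|$), whereas you use Cauchy--Schwarz; both routes yield a bound of the form $C_D^N(1+|m_1|^N+|m_2|^N)e^{-cNK^2}$ on the conditional expectation, and the square-root loss from Cauchy--Schwarz in the tail estimates is harmless. The handling of the second event $\{|z_3'-\ez(z_3')|>K\}$ is likewise fine.

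The gap is in the final integration over $(u,\rho)$. In \pref{le:goecpt} the polynomial degree in $u/\rho^2$ is $N-2$, strictly less than the exponent $N-1$ of $\rho$, so the substitution $u=\rho^2 s$ combined with the crude bound $\si_Y^2\le D'(0)\rho^2/N$ does reduce matters to \pref{le:intbd}. For $I_1$ your prefactor $(1+|m_1|)(1+|m_2|^{N-1})$ contains the cross term $|m_1|\,|m_2|^{N-1}$, of degree $N$ in $s$ (even $|m_2|^{N-1}$ alone has degree $N-1$), and the hypothesis $aN+b<cN+d$ of \pref{le:intbd} fails: with $a=c=1$, $b=0$, $d=-1$ one would need $N<N-1$. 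This is not a formality --- integrating out $\rho$ in $\int_0^\8\int_\rz|s|^{N}e^{-N(s^{2}+\mu^{2})\rho^{2}/(2D'(0))}\rho^{N-1}\dd s\,\dd\rho$ leaves $\int_\rz|s|^{N}(s^{2}+\mu^{2})^{-N/2}\dd s=\8$, so the majorant you produce is divergent. The loss occurs because $\si_Y^2$ is of order $\rho^4/N$ as $\rho\to0$ (\pref{le:albtd}), and replacing it by $D'(0)\rho^2/N$ weakens the Gaussian too much relative to a degree-$N$ prefactor. The repair is exactly what the paper does for $I_1$ (and in part (2) of \pref{le:exptt}): keep the exact Gaussian density in $u$ and substitute $v=(u-m_Y)/(\sqrt N\si_Y)$ as in \pref{eq:uvcov}--\pref{eq:m12cov}, so that $|m_i|\le|\mu|+|v|\,|\al\rho^2+\bt|$ with $|\al\rho^2+\bt|$ uniformly bounded thanks to \pref{eq:asmp1}--\pref{eq:asmp2}; then $\int_\rz(1+|v|^{N})e^{-Nv^{2}/2}\dd v\le C^{N}$ and only $\int_0^{\8}e^{-N\mu^{2}\rho^{2}/(2D'(0))}\rho^{N-1}\dd\rho\le C_{\mu,D}^{N}$ (or $\int_0^{R_2}\rho^{N-1}\dd\rho$ when $\mu=0$) remains.
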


\begin{proof}
  The argument is similar to that of \pref{le:goecpt}. As there, we only provide details for the case $\mu\neq 0$. Note that $\sfb^2\le -4D''(0)$. By \pref{eq:lantail},  \pref{eq:z13con0}, \pref{eq:absgau}, Young's inequality and conditioning, we find
\begin{align*}
  &\ez\Big[|z_1'| \prod_{i=1}^{N-1} |(\frac{N-1}{N})^{1/2}\la_i-z_3'| \indi\{\la_{N-1}^*>K \} \Big]\\
  &\le C^N\ez\Big[\Big(\frac{\sqrt2 \sfb}{\sqrt{\pi N}}+|\bar \sfa|\Big) ({(\la_{N-1}^*)}^{N-1} +|z_3'|^{N-1}) \indi\{\la_{N-1}^*>K \}\Big]\\
  &\le C^{N}\ez [ (\sfb+ |m_1|+|m_2| +\sqrt{-4D''(0)}|z_3'|) ({(\la_{N-1}^*)}^{N-1}+|z_3'|^{N-1})\indi\{\la_{N-1}^*>K\}]\\
 &\le C_{D}^N e^{-(N-1) K^2/18}(1+|m_1|^N+|m_2|^N).
\end{align*}
Using \pref{eq:asmp1}, \pref{eq:asmp2} and the change of variable formulas \pref{eq:uvcov} and \pref{eq:m12cov},
\begin{align*}
&I_1(E, (R_1,R_2), \{\la_{N-1}^*>K\} )\\
&\le  C_{D}^N e^{-(N-1) K^2/18}\int_{0}^\8  \int_{\rz} (1 +|m_1|^N+|m_2|^N)  \frac{e^{-\frac{(u-m_Y)^2}{2\si_Y^2}}}{\sqrt{2\pi}\si_Y}  \frac{e^{-\frac{N \mu^2 \rho^2}{2D'(0)}} }{(2\pi)^{N/2} D'(0)^{N/2}} \rho^{N-1} \dd u  \dd\rho \\
&\le  C_{\mu,D}^N e^{-(N-1) K^2/18} \int_{0}^\8  \int_{\rz} [1 +  |v|^{N}(\al\rho^2+\bt)^N]    \frac{e^{-\frac{N v^2}{2}}}{\sqrt{2\pi}}  \frac{e^{-\frac{N \mu^2 \rho^2}{2D'(0)}} }{(2\pi)^{N/2} D'(0)^{N/2}} \rho^{N-1} \dd v  \dd\rho  \\
&\le   C_{\mu,D}^N e^{-(N-1) K^2/18}  \int_{0}^{\8} e^{-\frac{N \mu^2 \rho^2}{2D'(0)}} \rho^{N-1} \dd\rho\\
&\le C_{\mu,D}^N e^{-(N-1) K^2/18}.
\end{align*}
From here the first assertion follows.  The argument for the second one is in the same fashion after observing $|z_3'|\le |z_3'-\ez(z_3')|+|\ez(z_3')|$ and
\begin{align*}
  &\ez\Big[|z_1'| \prod_{i=1}^{N-1} |(\frac{N-1}{N})^{1/2}\la_i-z_3'| \indi\{ |z_3'-\ez(z_3')|>K \} \Big]\\
  &\le C^{N}\ez [ (\sfb+ |m_1|+|m_2| +\sqrt{-4D''(0)}|z_3'|) ({(\la_{N-1}^*)}^{N-1}+|z_3'|^{N-1})\indi\{|z_3'-\ez(z_3')|>K\}]\\
 &\le C_{D}^N e^{-N K^2/2}(1+|m_1|^N+|m_2|^N).\qedhere
\end{align*}
\end{proof}

\begin{lemma}\label{le:exptti12}
 Let $\de>0$.  Suppose $|\mu|+\frac1{R_2}>0$. Then we have
\begin{align*}
\limsup_{N \to\8} \frac1N\log I_1(E, (R_1,R_2), \{L(\la_1^{N-1})\notin B(\si_{\rm sc},\de)\} )=-\8.
\end{align*}
\end{lemma}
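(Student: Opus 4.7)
The plan is to follow the blueprint of \pref{le:goecpt2} but with the extra factor $|z_1'|$ handled via the conditional distribution \pref{eq:z13con0}, exactly as in \pref{le:exptti11}. The key quantitative input is once again the large deviation estimate \pref{eq:conineq}, which gives a decay $e^{-cN^2}$ (speed $N^2$), and this will dominate any polynomial-in-$N$ or exponential-in-$N$ growth coming from moments.

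First, I would condition on $z_3'=y$ and apply \pref{eq:absgau} to get
\[
\ez\bigl[|z_1'|\, \bigl|\, z_3'=y\bigr]\le \frac{\sqrt{2}\sfb}{\sqrt{\pi N}}+|\bar\sfa|,
\]
which after undoing the conditioning yields a bound of the form $C^N(\sfb+|m_1|+|m_2|+\sqrt{-4D''(0)}|z_3'|)$ when multiplied against the $(N-1)$-fold product and moments of $\la_{N-1}^*$ and $|z_3'|$. Applying the Cauchy--Schwarz inequality separates the GOE tail from the large-deviation indicator:
\begin{align*}
&\ez\Bigl[|z_1'|\prod_{i=1}^{N-1}|(\tfrac{N-1}{N})^{1/2}\la_i-z_3'|\indi\{L(\la_1^{N-1})\notin B(\si_{\rm sc},\de)\}\Bigr]\\
&\quad\le C^N\ez\bigl[(\sfb+|m_1|+|m_2|+\sqrt{-4D''(0)}|z_3'|)^2((\la_{N-1}^*)^{2(N-1)}+|z_3'|^{2(N-1)})\bigr]^{1/2}\\
&\qquad\times\pz(L(\la_1^{N-1})\notin B(\si_{\rm sc},\de))^{1/2}.
\end{align*}

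Next, I would use \pref{eq:goenorm} to bound the GOE moment, and \pref{eq:z3mom} (together with \pref{le:apest}) to control moments of $|z_3'|$ and the $|m_i|$. As in the proof of \pref{le:exptti11}, this reduces the full expectation to a prefactor of the form
\[
C_{\mu,D}^N\,e^{-cN^2/2}\bigl(1+|m_1|^N+|m_2|^N\bigr).
\]
Then I pass to the integral over $(u,\rho)$ via the change of variables \pref{eq:uvcov}, \pref{eq:m12cov}, using \pref{eq:asmp1}--\pref{eq:asmp2} to control $(\al\rho^2+\bt)$ and absorb Gaussian weights. The resulting double integral is of exactly the type handled by \pref{le:intbd} (after splitting $\mu\neq 0$ from $\mu=0,\ R_2<\8$), so it is bounded by $C_{\mu,D,R_2}^N$.

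Combining these ingredients,
\[
I_1(E,(R_1,R_2),\{L(\la_1^{N-1})\notin B(\si_{\rm sc},\de)\})\le C_{\mu,D,R_2}^N\, e^{-cN^2/2},
\]
from which $\limsup_{N\to\8}\frac1N\log I_1(\cdots)=-\8$ follows. The main (and only) genuine obstacle here is bookkeeping: one has to carry through the conditional-distribution reduction for $z_1'$ and the change of variables cleanly, but since \pref{le:exptti11} and \pref{le:goecpt2} already combine these same ingredients separately, nothing new is required beyond noting that the $e^{-cN^2}$ from \pref{eq:conineq} swamps the at-most-exponential-in-$N$ contributions from moments and the $(u,\rho)$-integral.
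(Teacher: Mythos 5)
Your proposal is correct and follows essentially the same route as the paper: condition on $z_3'$ via \pref{eq:z13con0} and \pref{eq:absgau} to handle $|z_1'|$ as in \pref{le:exptti11}, apply the Cauchy--Schwarz inequality together with \pref{eq:conineq} to extract the $e^{-cN^2}$ factor, and then bound the remaining $(u,\rho)$-integral of $1+|m_1|^N+|m_2|^N$ exactly as in \pref{le:exptti11}. The only cosmetic difference is that the paper records the intermediate bound as $C_D^N(1+|m_1|^{2N}+|m_2|^{2N})^{1/2}e^{-cN^2}$, which is equivalent to yours.
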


\begin{proof}
  The proof is similar to that of \pref{le:goecpt2} and we only provide the difference for the case $\mu\neq0$.  Conditioning as in the proof of \pref{le:exptti11}, using Young's inequality, the Cauchy--Schwarz inequality and \pref{eq:conineq}, we find
  \begin{align*}
    &\ez\Big[|z_1'| \prod_{i=1}^{N-1} |(\frac{N-1}{N})^{1/2}\la_i-z_3'| \indi\{L(\la_1^{N-1})\notin B(\si_{\rm sc},\de) \} \Big]\\
    &\le C_{D}^N (1+|m_1|^{2N}+|m_2|^{2N})^{1/2} e^{-c N^2}.
  \end{align*}
  The rest of argument follows verbatim that of \pref{le:exptti11}.
\end{proof}

\section{Proof of \pref{th:cpsublevel}}\label{se:4}


For a probability measure $\nu$ defined on $\rz$, recall the functions $\Psi(\nu,x)$ and $\Psi_*(x)$ as in \pref{eq:psidef0}. Let us define
\begin{align}
 \psi(\nu,\rho,u,y)  &= \Psi(\nu,y)-\frac{(u-m_Y)^2}{2\Big(D(\rho^2)-\frac{D'(\rho^2)^2 \rho^2}{D'(0)} \Big)}-\frac{-2D''(0)}{-2D''(0)-\bt^2}\Big(y+\frac{m_2}{\sqrt{-4D''(0)}}\Big)^2\notag \\
&\ \  -\frac{\mu^2\rho^2}{2D'(0)}+\log \rho, \label{eq:psidef} \\
\psi_*(\rho,u,y)&=\psi(\si_{\rm sc},\rho,u,y). \notag
\end{align}
Recalling the notations \pref{eq:msialbt}, $\psi_*(\rho,u,y)$ can be written explicitly as
\begin{align}
 		&\psi_*(\rho,u,y)= \Psi_*( y)-\frac{(u-\frac{\mu\rho^2}{2}+\frac{\mu D'(\rho^2) \rho^2}{D'(0) } )^2}{ 2 (D(\rho^2)-\frac{D'(\rho^2)^2 \rho^2}{D'(0)} )} -\frac{\mu^2\rho^2}{2D'(0)}+\log \rho-\frac{-2D''(0)}{-2D''(0)-\frac{[D'(\rho^2 )-D'(0)]^2}{{ D(\rho^2 )-\frac{D'(\rho^2)^2 \rho^2}{D'(0)}}}}
 		\notag \\
 		& \times\Big(y+\frac{1} {\sqrt{-4D''(0)}}\Big[ \mu + \frac{(u-\frac{\mu \rho^2}{2} +\frac{\mu D'(\rho^2) \rho^2}{D'(0)}) (D'(\rho^2) -D'(0) )}{D( \rho^2) -\frac{D'(\rho^2)^2 \rho^2}{D'(0) }}\Big]\Big)^2.
\label{eq:psifunction} 	
\end{align}

\begin{lemma}\label{le:psirho0}
  For any $u$ and $y$ fixed, we have $\lim_{\rho\to0+}\psi_*(\rho,u,y) = -\8$. For any $\rho$ and $u$ fixed, we have $\lim_{|y|\to \8}\psi_*(\rho,u,y) = -\8$.
\end{lemma}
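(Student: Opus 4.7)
The plan is to read off both limits from the explicit expression \pref{eq:psifunction} by discarding the non-positive summands that do not play a role and then identifying, in each regime, a single remaining term that forces $\psi_*\to -\infty$.

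\smallskip

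\textbf{First claim ($\rho\to 0^+$ with $u,y$ fixed).} I would first observe that the two ``Gaussian-type'' summands in \pref{eq:psifunction}, namely $-\frac{(u-\mu\rho^2/2+\mu D'(\rho^2)\rho^2/D'(0))^2}{2(D(\rho^2)-D'(\rho^2)^2\rho^2/D'(0))}$ and $-\frac{-2D''(0)}{-2D''(0)-\bt^2}(y+m_2/\sqrt{-4D''(0)})^2$, are both non-positive: the first because its denominator equals $N\si_Y^2>0$ (see \pref{eq:msialbt}), and the second because $-2D''(0)-\bt^2>0$ by Assumption IV via \pref{eq:asmp1} (using $\alpha,\bt\le 0$, so $\alpha\bt\rho^2\ge 0$ forces $\bt^2<-2D''(0)$). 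Dropping both yields
$$\psi_*(\rho,u,y)\le \Psi_*(y)-\frac{\mu^2\rho^2}{2D'(0)}+\log\rho,$$
and since $\Psi_*(y)$ is fixed, $\mu^2\rho^2/(2D'(0))\to 0$, and $\log\rho\to -\infty$, the right-hand side tends to $-\infty$.

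\smallskip

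\textbf{Second claim ($|y|\to\infty$ with $\rho,u$ fixed).} Here I would exploit the universal bound $\Psi_*(y)\le \frac{y^2}{2}-\frac12-\frac12\log 2$ recorded right after \pref{eq:phi*}, together with the last term of \pref{eq:psifunction}. Since $\bt^2\ge 0$, the coefficient $\frac{-2D''(0)}{-2D''(0)-\bt^2}\ge 1$, so writing $c=m_2/\sqrt{-4D''(0)}$, which is a constant in $y$ for fixed $\rho,u$, one obtains
$$\psi_*(\rho,u,y)\le \frac{y^2}{2}-(y+c)^2+K(\rho,u),$$
where $K(\rho,u)$ absorbs all terms independent of $y$. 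Expanding the square, the $y$-dependent part reduces to $-\frac{y^2}{2}-2cy-c^2$, which tends to $-\infty$ as $|y|\to\infty$.

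\smallskip

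The only point requiring any care is the strict positivity of $-2D''(0)-\bt^2$, which legitimizes both ``this term is $\le 0$'' in the first claim and ``the coefficient is $\ge 1$'' in the second; this is exactly \pref{eq:asmp1} under Assumption IV. Beyond this, each claim reduces to a one-line asymptotic comparison.
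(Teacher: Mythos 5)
Your proposal is correct and follows essentially the same route as the paper: both claims are read off from \pref{eq:psifunction} by noting that the two quadratic summands are non-positive (via $\si_Y^2>0$ and $-2D''(0)-\bt^2>0$ from \pref{eq:asmp1}), so that $\log\rho\to-\8$ drives the first limit and the bound $\Psi_*(y)\le \frac{y^2}{2}-\frac12-\frac12\log 2$ combined with the coefficient $\frac{-2D''(0)}{-2D''(0)-\bt^2}\ge 1$ drives the second. The only cosmetic difference is that the paper retains and explicitly bounds the $(u-m_Y)$-quadratic near $\rho=0$ using $D(\rho^2)-\frac{D'(\rho^2)^2\rho^2}{D'(0)}\sim-\frac32 D''(0)\rho^4$, whereas you simply discard it as non-positive; both are valid.
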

\begin{proof}
From \pref{le:albtd}, we know $ D(\rho^2)-\frac{D'(\rho^2)^2\rho^2}{D'(0)} \sim -\frac32D''(0)\rho^4$ as $\rho\to0+$.
For any $\eps>0$ and $\rho\in(0,\eps)$, we may find $c_\eps$ such that
\begin{align*}
  \psi_*(\rho,u,y)-\Psi_*(y) & \le -\frac{(u-\frac{\mu\rho^2}{2}+\frac{\mu D'(\rho^2) \rho^2}{D'(0) } )^2}{ 2 (D(\rho^2)-\frac{D'(\rho^2)^2 \rho^2}{D'(0)} )} -\frac{\mu^2\rho^2}{2D'(0)}+\log \rho\\
  &\le -\frac{(\frac{u}{\rho^2}-\frac{\mu}{2}+\frac{\mu D'(\rho^2)}{D'(0)})}{-3c_\eps D''(0)}-\frac{\mu^2 \rho^2}{2D'(0)} +\log \rho .
\end{align*}
The right-hand side clearly tends to $-\8$ as $\rho\to0+$.

Since $\frac{-2D''(0)}{-2D''(0)-\bt^2}\ge 1$, from the definition it is clear to see $\lim_{|y|\to \8}\psi_*(\rho,u,y) = -\8$ for fixed $\rho$ and $u$.
\end{proof}
Let $\llbracket \ell \rrbracket =\{i_1,...,i_\ell\}\subset [N-1]$. For any 1-Lipschitz function $f$, we have
\begin{align}
  &\Big|\frac1{N-1} \sum_{j=1}^{N-1}f(\la_j) -\frac1{N-1-\ell}\sum_{j\in[N-1]\setminus \llbracket \ell \rrbracket }f(\la_j)\Big| \notag\\
  &\le \frac1{(N-1)(N-1-\ell)}\sum_{j\in[N-1]\setminus \llbracket \ell \rrbracket} |(N-1-\ell)f(\la_j)+\sum_{i\in \llbracket \ell \rrbracket}f(\la_i) -(N-1)f(\la_j) |\notag\\
  &\le \frac{\ell}{N-1}\max_{i,j}|\la_i-\la_j|. \label{eq:flanl}
\end{align}

\subsection{Upper bound}\label{se:ub}
\begin{proposition}\label{pr:ubi2}
    Suppose $\bar E$ is compact and $0\le R_1<R_2<\8$. Under Assumptions I, II and IV, we have
    \begin{align*}
        \limsup_{N\to\8} \frac1N\log I_2(E,(R_1,R_2)) \le \frac12 \log[-4D''(0)] -\frac12\log D'(0) -\frac12\log(2\pi)+\sup_{(\rho,u,y)\in F}\psi_*(\rho,u,y),
    \end{align*}
    where  $F=\{(\rho,u,y):y\in\rz, \rho \in (R_1, R_2),u\in  \bar E\}$ and $\psi_*(\rho,u,y)$ is given as in \pref{eq:psifunction}.
\end{proposition}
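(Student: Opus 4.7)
The strategy is to reduce the problem to a compact event via exponential tightness, then carry out a Laplace/LDP-type analysis based on the weak convergence of the empirical eigenvalue measure to the semicircle law. First, by Lemmas \ref{le:goecpt} and \ref{le:goecpt2}, up to an error whose $\frac1N \log$ tends to $-\infty$ as $K \to \infty$ and $\de \to 0$, I may replace $I_2(E,(R_1,R_2))$ by $I_2(E,(R_1,R_2), \Delta_{K,\de})$ where
\[
\Delta_{K,\de} = \{\la_{N-1}^* \le K\} \cap \{|z_3' - \ez(z_3')|\le K\} \cap \{L(\la_1^{N-1}) \in B(\si_{\rm sc}, \de)\}.
\]
Since $\bar E$ and $[R_1,R_2]$ are compact, the parameters $m_1, m_2, \ez(z_3')$ and $N\si_Y^2$ are uniformly controlled on the integration domain, so on $\Delta_{K,\de}$ the variable $y := z_3'$ lies in a compact interval $[-L,L]$ depending only on $K,\mu,D,R_2,\bar E$. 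Moreover, $Z_i = \sqrt{N/(-2D''(0))}(V\xi)_i$ is, by rotational invariance of $\xi$, a standard Gaussian independent of the eigenvalues and of $z_3'$, so $\ez[Z_i^2\prod_{j\neq i}|\cdots|\indi_{\Delta_{K,\de}}] = \ez[\prod_{j\neq i}|\cdots|\indi_{\Delta_{K,\de}}]$.

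Next I perform the central estimate. Let $\log^-_M(t) := \max(\log t, -M)$, a bounded Lipschitz function with $\|\log^-_M\|_L \le e^M$, and set $\Psi_M(y) := \int \log^-_M|x-y|\,\si_{\rm sc}(dx)$. Since $\log \le \log^-_M$,
\[
\prod_{j\neq i}|(\tfrac{N-1}{N})^{1/2}\la_j - y| \le \exp\!\Big((N-2)\tfrac{1}{N-2}\sum_{j\neq i}\log^-_M|(\tfrac{N-1}{N})^{1/2}\la_j - y|\Big).
\]
Combining \eqref{eq:flanl} (to pass from $j\neq i$ to all $j$, at cost $O(e^M K /N)$) with the bounded-Lipschitz estimate $|\!\int fdL - \int fd\si_{\rm sc}| \le \|f\|_{BL}\,d(L,\si_{\rm sc})$ and choosing $\de < \eps e^{-M}$, the inner average is bounded by $\Psi_M(y) + 2\eps$ uniformly for $y \in [-L,L]$ and $N$ large. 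Now conditioning on $z_3' = y$ against its Gaussian density (whose exponent equals $-N\frac{-2D''(0)}{-2D''(0)-\bt^2}(y+m_2/\sqrt{-4D''(0)})^2$ modulo an $O(\log N)$ normalization, matching the third summand of $\psi_*$) and combining with the other density factors -- $\frac{e^{-(u-m_Y)^2/(2\si_Y^2)}}{\sqrt{2\pi}\si_Y}$ produces the second summand of $\psi_*$, while $\frac{e^{-N\mu^2\rho^2/(2D'(0))}}{(2\pi)^{N/2}D'(0)^{N/2}}\rho^{N-1}$ produces the fourth summand and the $\log\rho$ -- Laplace's method on the compact set $F_K := (R_1,R_2)\times \bar E\times [-L,L]$ yields
\[
\limsup_N\tfrac{1}{N}\log I_2(E,(R_1,R_2),\Delta_{K,\de}) \le \tfrac12\log[-4D''(0)] - \tfrac12\log D'(0) - \tfrac12\log(2\pi) + \sup_{F_K}\psi_*^M + O(\eps),
\]
where $\psi_*^M$ denotes $\psi_*$ with $\Psi_*$ replaced by $\Psi_M$.

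The main obstacle is passing $M\to\infty$ inside the supremum. Since $\Psi_M \downarrow \Psi_*$ pointwise by monotone convergence and each $\Psi_M$ is continuous in $y$ (hence $\psi_*^M$ is continuous on $F_K$), and the limit $\psi_*$ is itself continuous in view of the explicit formula \eqref{eq:phi*}, Dini's theorem on the compact set $F_K$ gives $\sup_{F_K}\psi_*^M \downarrow \sup_{F_K}\psi_*$ as $M\to\infty$. Finally, taking $\eps\to 0$, then $\de\to 0$ and $K\to\infty$ in that order, and invoking Lemma \ref{le:psirho0} (which ensures $\psi_*(\rho,u,y)\to -\infty$ as $|y|\to\infty$, so that $\sup_{F_K}\psi_* \uparrow \sup_F \psi_*$) completes the proof.
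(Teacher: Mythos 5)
Your proposal follows the same overall strategy as the paper: reduce to the compact event $\{\la_{N-1}^*\le K\}\cap\{|z_3'-\ez(z_3')|\le K\}\cap\{L(\la_1^{N-1})\in B(\si_{\rm sc},\de)\}$ via Lemmas \ref{le:goecpt} and \ref{le:goecpt2}, control the product of eigenvalue factors through the closeness of the empirical measure to $\si_{\rm sc}$, and then run a Laplace-type bound over $(\rho,u,y)$. The one genuine technical divergence is how you handle the unboundedness of $\log|\cdot|$: you truncate at $-M$ to get a bounded Lipschitz test function compatible with the metric \eqref{eq:measd} and remove the truncation afterwards by monotone convergence and Dini's theorem, whereas the paper bounds the product directly by $\exp[(N-2)\sup_{\nu\in B_K(\si_{\rm sc},2\de)}\Psi(\nu,z_3')]$ and sends $\de\to0$ using upper semicontinuity of $(\nu,\rho,u,y)\mapsto\psi(\nu,\rho,u,y)$ on $\overline{F(2\de)}$. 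Your route is slightly more elementary (it avoids arguing u.s.c.\ of $\Psi$ on $\px([-K,K])\times\rz$) at the price of an extra limit; both are sound, and your observation that $Z_i$ integrates out trivially is correct.

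One step does need repair. When $R_1=0$, the assertion that $m_2$ and $\ez(z_3')=-m_2/\sqrt{-4D''(0)}$ are ``uniformly controlled on the integration domain,'' so that $z_3'$ lives in a fixed compact window $[-L,L]$, is false: by \eqref{eq:mtest} and \eqref{eq:m12cov}, $m_2=\mu+v\bt$ with $v\sim u/(c\rho^2)$ as $\rho\to0+$, which blows up for $u\neq0$. Consequently your compact set $F_K$ does not capture the relevant range of $y$ near $\rho=0$. Two fixes are available: either first discard $\rho\in(0,\eps)$ using the remark after Lemma \ref{le:exptt} (the bound on $I_2(\rz,(0,\eps))$ holds for $\mu=0$ as well), so that one may assume $R_1>0$ exactly as the paper does in the proof of Proposition \ref{pr:ubi1}; or let the $y$-window float with $(\rho,u)$, i.e.\ take $y\in[-m_2/\sqrt{-4D''(0)}-K,\,-m_2/\sqrt{-4D''(0)}+K]$ as in the set $F(\de)$ of \eqref{eq:fdeset}, and check that $\psi$ remains bounded above there because the quadratic penalty $-(u-m_Y)^2/(2N\si_Y^2)$ dominates the logarithmic growth of $\Psi(\nu,y)$. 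A related cosmetic point: Dini's theorem should be applied on the closure $[R_1,R_2]\times\bar E\times[-L,L]$, with $\psi_*^M$ and $\psi_*$ understood as upper semicontinuous extended-real functions (equal to $-\8$ at $\rho=0$ by Lemma \ref{le:psirho0}); the standard u.s.c.\ version of Dini for decreasing maxima then applies. With these adjustments the argument is complete.
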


\begin{proof}
    Since
    \begin{align*}
        I_2(E,(R_1,R_2))&=I_2(E,(R_1,R_2),\{L(\la_1^{N-1})\in B_K(\si_{{\rm sc}},\de), |z_3'-\ez(z_3')|\le K\}) \\
        &\ \ +I_2(E,(R_1,R_2),\{L(\la_1^{N-1})\notin B_K(\si_{{\rm sc}},\de)\}\cup \{|z_3'-\ez(z_3')|> K\}),
    \end{align*}
    by Lemmas \ref{le:goecpt} and \ref{le:goecpt2}, we can always choose $K$ large enough so that the second term is exponentially negligible as $N\to\8$, provided the first term yields a finite quantity in the limit. We only need to consider the first term.

    Using \pref{eq:flanl}, if $L(\la_{i=1}^{N-1})\in B_K(\si_{\rm sc}, \de) $, we may choose $N$ large enough so that $L((\frac{N-1}{N})^{1/2}\la_{j=1,j\neq i}^{N-1})\in  B_K(\si_{\rm sc}, 2\de)$. It follows that for any $i\in[N-1]$,
    \begin{align}\label{eq:jile}
    \prod_{j=1, j\neq i}^{N-1} |(\frac{N-1}{N})^{1/2}\la_j-z_3' | \indi\{ L(\la_{i=1}^{N-1})\in B_K(\si_{\rm sc}, \de) \}\le e^{(N-2)\sup_{\nu\in B_K(\si_{\rm sc},2\de)} \Psi(\nu,z_3')}.
    \end{align}
    By \pref{le:albtd} and \pref{eq:asmp1}, we have $c_{D,R_2}:=\inf_{R_1<\rho<R_2}-2D''(0)-\bt^2>0$. It follows that
    \begin{align*}
        &\frac{\sqrt{-4ND''(0)}}{\sqrt{2\pi(-2D''(0)-\bt^2)}} \exp\Big(-\frac{-2ND''(0)(y+\frac{m_2}{\sqrt{-4D''(0)}})^2}{-2D''(0)-\bt^2} \Big)\\
        &\le \frac{\sqrt{-4ND''(0)}}{\sqrt{2\pi c_{D,R_2}}} \exp\Big(-\frac{-2ND''(0)(y+\frac{m_2}{\sqrt{-4D''(0)}})^2}{-2D''(0)-\bt^2} \Big).
    \end{align*}
     Let
    \begin{align}\label{eq:fdeset}
        F(\de)=\Big\{ (\nu,\rho,u,y): \nu\in B_{K}(\si_{\rm sc},\de), y\in\Big [-\frac{m_2}{\sqrt{-4D''(0)}}-K, -\frac{m_2}{\sqrt{-4D''(0)}}+K \Big],\notag \\
         \rho \in(R_1,R_2), u\in  \bar E \Big \}.
    \end{align}
    Using $\rho^2\le R_2^2$ and the fact that all summands of $\psi(\nu,\rho,u,y)$ in \pref{eq:psidef} are bounded from above on $F(\de)$, we deduce from \pref{le:apest}
    \begin{align*}
        &I_2(E,(R_1,R_2),\{L(\la_1^{N-1})\in B_K(\si_{{\rm sc}},\de), |z_3'-\ez(z_3')|\le K\})\\
        &\le [-4D''(0)]^{N/2}\int_{R_1}^{R_2}\int_E \ez\Big[e^{(N-2)\sup_{\nu\in B_K(\si_{\rm sc},2\de)} \Psi(\nu,z_3')}\indi\{|z_3'-\ez(z_3')|\le K\} \Big]\\
        &\ \ \frac{ \sqrt{N} e^{-\frac{N(u-m_Y)^2}{2\big[D(\rho^2)-\frac{D'(\rho^2)^2\rho^2}{D'(0)} \big]}}}{\sqrt{2\pi(D(\rho^2)-\frac{D'(\rho^2)^2\rho^2}{D'(0)} )}} \frac{e^{-\frac{N \mu^2 \rho^2}{2D'(0)}}}{(2\pi)^{N/2} D'(0)^{N/2}}  \rho^{N-1}   \dd u  \dd\rho\\
        &\le \frac{C_{D,R_2, |E|} N [-4D''(0)]^{\frac{N+1}2}}{(2\pi)^{\frac{N+2}{2}} D'(0)^{\frac{N}2} } \exp\Big[ (N-3) \sup_{(\nu,\rho,u,y)\in F(2\de)} \psi(\nu,\rho,u,y)\Big],
    \end{align*}
    where $|E|$ is the Lebesgue measure of $E$.
    Since $\psi(\nu,\rho,u,y)$ is an upper semi-continuous function on $F(2\de)$ and attains its maximum on the closure $\overline{F(2\de)}$, we have
    \begin{align*}
        &\limsup_{\de\to0+} \sup_{(\nu,\rho,u,y)\in F(2\de)}  \psi(\nu,\rho,u,y)  \le \sup_{(\rho,u,y)\in F(0)}\psi_*(\rho,u,y).
    \end{align*}
    By Lemmas \ref{le:goecpt} and \ref{le:psirho0}, the continuous function $\psi_*(\rho,u,y)$ attains its maximum in $\bar F$ at some point $(\rho_*,u_*,y_*)$ with $\rho_*>0$. Therefore we may choose $K$ large enough in the beginning so that
    \[
    \sup_{(\rho,u,y)\in F(0)}\psi_*(\rho,u,y) = \psi_*(\rho_*,u_*,y_*).
    \]
    This justifies that $\sup_{(\rho,u,y)\in F}\psi_*(\rho,u,y)>-\8$ and the proof is complete.
\end{proof}



\begin{proposition}\label{pr:ubi1}
    Suppose $\bar E$ is compact and $0\le R_1<R_2<\8$. Under Assumptions I, II and IV, we have
    \begin{align*}
        \limsup_{N\to\8} \frac1N\log I_1(E,(R_1,R_2)) \le \frac12 \log[-4D''(0)] -\frac12\log D'(0) -\frac12\log(2\pi)+\sup_{(\rho,u,y)\in F}\psi_*(\rho,u,y),
    \end{align*}
    where  $F=\{(\rho,u,y):y\in\rz, \rho \in (R_1, R_2),u\in  \bar E\}$ and $\psi_*(\rho,u,y)$ is given as in \pref{eq:psifunction}.
\end{proposition}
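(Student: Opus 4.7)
The plan is to mimic the proof of \pref{pr:ubi2}, with the one new element being the treatment of the extra factor $|z_1'|$. As before, I would first split
\[
I_1(E,(R_1,R_2)) = I_1(E,(R_1,R_2),\Delta_K) + I_1(E,(R_1,R_2),\Delta_K^c),
\]
where $\Delta_K := \{L(\la_1^{N-1})\in B_K(\si_{\rm sc},\de),\ |z_3'-\ez(z_3')|\le K\}$ (the second condition also implicitly bounds $\la_{N-1}^*\le K$ through $B_K$). By Lemmas \ref{le:exptti11} and \ref{le:exptti12}, for any fixed $\de>0$ the contribution from $\Delta_K^c$ is exponentially negligible as $N\to\8$ once $K$ is taken large, so it suffices to analyze the compact piece.

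On the compact piece I would condition on $z_3'=y$. Since the eigenvalues of $\GOE_{N-1}$ are independent of $(z_1',z_3')$, the conditional distribution \pref{eq:z13con0} together with \pref{eq:absgau} gives
\[
\ez\bigl[|z_1'|\bigm|z_3'=y\bigr] \le \sqrt{\tfrac{2}{\pi}}\,\tfrac{\sfb}{\sqrt N}+|\bar\sfa(\rho,u,y)|.
\]
Using Assumption IV (so that $-2D''(0)-\bt^2$ is bounded below on compact $\rho$-intervals), the quantity $|\bar\sfa(\rho,u,y)|$ is uniformly bounded on $[R_1,R_2]\times \bar E\times [-K-|\ez z_3'|,K+|\ez z_3'|]$, so the entire conditional factor is $O(1)$ uniformly on the compact set; this provides only a polynomial-in-$N$ (in fact constant) contribution, and therefore does not affect the exponential rate. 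On $\Delta_K$ I would then bound the product exactly as in the proof of \pref{pr:ubi2}, namely
\[
\prod_{i=1}^{N-1}\bigl|(\tfrac{N-1}{N})^{1/2}\la_i-z_3'\bigr|\,\indi_{\Delta_K}\ \le\ \exp\Bigl[(N-1)\sup_{\nu\in B_K(\si_{\rm sc},2\de)}\Psi(\nu,z_3')\Bigr],
\]
where the one extra factor (compared with the $N-2$ factors appearing in $I_2$) is compensated by the different $D''$ prefactor ($[-4D''(0)]^{(N-1)/2}$ here versus $[-4D''(0)]^{N/2}/(2N)$ there), so the final answer matches.

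Plugging these bounds into the definition of $I_1$ and integrating the exponential against the Gaussian density of $z_3'$ (around its mean $-m_2/\sqrt{-4D''(0)}$ with variance $(-2D''(0)-\bt^2)/(-4ND''(0))$) together with the Gaussian factor in $u$ and the weight $\rho^{N-1}e^{-N\mu^2\rho^2/(2D'(0))}$, I would arrive at an estimate of the form
\[
I_1(E,(R_1,R_2),\Delta_K)\ \le\ P(N)\,[-4D''(0)]^{(N-1)/2}\,(2\pi)^{-N/2}D'(0)^{-N/2}\,\exp\bigl[(N-c)\sup_{(\nu,\rho,u,y)\in F(2\de)}\psi(\nu,\rho,u,y)\bigr]
\]
with $F(\de)$ as in \pref{eq:fdeset} and $P(N)$ a polynomial factor. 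Taking $\tfrac{1}{N}\log$, letting $\de\to 0$ and invoking upper semi-continuity of $\psi$ (plus \pref{le:psirho0} to justify that the supremum of $\psi_*$ over $\bar F$ is attained at an interior point, so $K$ can be chosen large enough not to lose anything) yields precisely
\[
\limsup_{N\to\8}\tfrac{1}{N}\log I_1(E,(R_1,R_2)) \le \tfrac12\log[-4D''(0)]-\tfrac12\log D'(0)-\tfrac12\log(2\pi)+\sup_{(\rho,u,y)\in F}\psi_*(\rho,u,y).
\]
The only genuinely new step compared with \pref{pr:ubi2} is the conditional bound on $\ez[|z_1'|\mid z_3']$; I expect the main technical care (not the main obstacle) to be verifying that $|\bar\sfa|$ and $\sfb$ remain uniformly bounded on the compact set, since $\bar\sfa$ involves the quotient $\al\bt\rho^2/(-2D''(0)-\bt^2)$ whose control requires exactly Assumption IV via \pref{le:albtd} and \pref{eq:asmp1}.
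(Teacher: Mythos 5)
Your overall strategy is the same as the paper's: split off the event $\{L(\la_1^{N-1})\notin B_K(\si_{\rm sc},\de)\}\cup\{|z_3'-\ez(z_3')|>K\}$ via Lemmas \ref{le:exptti11} and \ref{le:exptti12}, condition on $z_3'=y$ and bound $\ez[|z_1'|\mid z_3'=y]$ by $\sqrt{2/\pi}\,\sfb/\sqrt N+|\bar\sfa|$ using \pref{eq:z13con0} and \pref{eq:absgau}, bound the product of $N-1$ factors by $\exp[(N-1)\sup_\nu\Psi(\nu,z_3')]$, and conclude by upper semi-continuity of $\psi$ as $\de\to0+$. That part is fine, and your observation that the extra factor relative to $I_2$ only changes the prefactor by a subexponential amount is correct.

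There is, however, one genuine gap: your claim that $|\bar\sfa(\rho,u,y)|$ is uniformly bounded on $[R_1,R_2]\times\bar E\times[-K-|\ez z_3'|,K+|\ez z_3'|]$ is false when $R_1=0$, which the proposition allows. By \pref{le:albtd}, $D(\rho^2)-D'(\rho^2)^2\rho^2/D'(0)\sim -\tfrac32 D''(0)\rho^4$ and $\al\rho^2$ stays bounded away from $0$ as $\rho\to0+$, so the first term of $\bar\sfa$ behaves like a constant times $u/\rho^2$ (equivalently, $|m_1|,|m_2|\lesssim |u|/\rho^2$ by \pref{le:apest}); for $u\neq0$ this blows up as $\rho\to0+$, so the conditional factor is not $O(1)$ near $\rho=0$ and your bound degenerates there. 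The paper resolves this by first invoking the strengthened form of \pref{le:exptt} (the remark following it, which gives $\limsup_{\eps\to0+}\limsup_N\frac1N\log I_1(\rz,(0,\eps))=-\8$) to replace $(0,R_2)$ by $(\eps,R_2)$ and thereby assume $R_1>0$; only then is $\sup(|m_1|+|m_2|)$ finite on the compact set (with a constant depending on $R_1$). You should insert this reduction at the start; with it, the rest of your argument goes through as written.
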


\begin{proof}
    By the remark after \pref{le:exptt}, we know
    \begin{align*}
      \limsup_{N\to\8}\frac1N\log I_1(E,(0,R_2))& = \limsup_{N\to\8}\frac1N\log I_1(E,(\eps,R_2))
    \end{align*}
    by choosing $\eps>0$ small enough. Hence, we may assume $R_1>0$.    Similar to the proof of \pref{pr:ubi2}, since
    \begin{align*}
        I_1(E,(R_1,R_2))&=I_1(E,(R_1,R_2),\{L(\la_1^{N-1})\in B_K(\si_{{\rm sc}},\de), |z_3'-\ez(z_3')|\le K\}) \\
        &\ \ +I_1(E,(R_1,R_2),\{L(\la_1^{N-1})\notin B_K(\si_{{\rm sc}},\de)\}\cup \{|z_3'-\ez(z_3')|> K\}),
    \end{align*}
    thanks to Lemmas \ref{le:exptti11} and \ref{le:exptti12}, by choosing $K$ large enough, it suffices to consider the first term. Since $0<R_1<R_2<\8$, using continuity of functions in question, conditioning with \pref{eq:z13con0} and \pref{le:apest} for $\si_Y$,
    \begin{align*}
      &I_1(E,(R_1,R_2),\{L(\la_1^{N-1})\in B_K(\si_{{\rm sc}},\de), |z_3'-\ez(z_3')|\le K\})\\
      &\le [-4D''(0)]^{\frac{N-1}{2}} \sup_{R_1\le  \rho\le R_2, u\in \bar E, |y+\frac{m_2}{\sqrt{-4D''(0)}}|\le K} (\sfb+|m_1|+|m_2|+\sqrt{-4D''(0)}|y|) \int_{R_1}^{R_2} \int_{E}  \notag\\
  &\ \  \ez\Big[e^{(N-1)\sup_{\nu\in B_K(\si_{\rm sc},\de)} \Psi(\nu,z_3')}\indi\{|z_3'-\ez(z_3')|\le K\} \Big] \frac{ e^{-\frac{(u-m_Y)^2}{2\si_Y^2}}}{\sqrt{2\pi}\si_Y} \frac{e^{-\frac{N \mu^2 \rho^2}{2D'(0)}}}{(2\pi)^{N/2} D'(0)^{N/2}}  \rho^{N-1}   \dd u  \dd\rho\\
  &\le \frac{C_{R_1,R_2,D,K,\bar E} [-4D''(0)]^{\frac{N}{2}}} {(2\pi)^{\frac{N+2}{2}} D'(0)^{\frac{N}2}} \exp\Big[(N-3)\sup_{(\nu,\rho,u,y)\in F(\de)}\psi(\nu,\rho,u,y) \Big],
    \end{align*}
    where $F(\de)$ is given as in \pref{eq:fdeset} and the supremum of $|m_1|+|m_2|$ may depend on $R_1$. The assertion follows from the upper semi-continuity of $\psi(\nu,\rho,u,y)$ on $F(\de)$ by sending $N\to\8$ and $\de\to0+$.
\end{proof}

\subsection{Lower bound}

\begin{proposition}\label{pr:lbd*}
    Suppose $E$ is an open set and $0\le R_1<R_2<\8$. Under Assumptions I, II and IV, we have
    \begin{align*}
        \liminf_{N\to\8} \frac1N\log \ez\Crt_N(E,(R_1,R_2)) \ge \frac12+ \frac12 \log[-4D''(0)] -\frac12\log D'(0) +\sup_{(\rho,u,y)\in F}\psi_*(\rho,u,y),
    \end{align*}
    where  $F=\{(\rho,u,y):y\in\rz, \rho \in (R_1, R_2),u\in  \bar E\}$ and $\psi_*(\rho,u,y)$ is given as in \pref{eq:psifunction}.
\end{proposition}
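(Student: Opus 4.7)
The plan is to invert the localization strategy of the upper bounds: restrict the Kac--Rice expression \pref{eq:krerr} to a small neighborhood of an optimizer of $\psi_*$ and establish a matching pointwise lower bound on $\ez|\det G|$. First, I verify that $\sup_F\psi_*$ is attained at some $(\rho_*,u_*,y_*)$ with $\rho_*\in(R_1,R_2)$, $u_*\in E$, $y_*\in\rz$. The divergences $\psi_*\to -\8$ as $\rho\to 0^+$ and as $|y|\to\8$ from \pref{le:psirho0}, combined with continuity on the compact set $[R_1,R_2]\times\bar E\times[-M,M]$ for $M$ large, furnish a maximizer on $\bar F$; openness of $E$ and continuity of $\psi_*$ in $u$ then allow me to push the optimizer into the interior of $(R_1,R_2)\times E\times\rz$ at arbitrarily small cost (and similarly to handle $\rho_*\in\{R_1,R_2\}$).

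Fix $\eta,\de>0$ small and $K$ large, and restrict the $(\rho,u)$-integration in \pref{eq:krerr} to $(\rho_*-\eta,\rho_*+\eta)\times(u_*-\eta,u_*+\eta)$ and the inner expectation to
\begin{equation*}
\Omega:=\{L(\la_1^{N-1})\in B_K(\si_{\rm sc},\de)\}\cap\{|z_3'-y_*|<\eta\}.
\end{equation*}
From the Schur identity \pref{eq:schur}, $\det G=\det(G_{**})(z_1'-\xi^\sfT G_{**}^{-1}\xi)$. Since \pref{eq:z13con0} shows that $z_1'$ conditional on $(\GOE_{N-1},\xi,z_3')$ is Gaussian with variance $\sfb^2/N$, the left-hand inequality in \pref{eq:absgau} yields
\begin{equation*}
\ez[|\det G|\,\indi_\Omega]\ \ge\ \sqrt{\tfrac{2}{\pi N}}\,\sfb\cdot \ez[|\det G_{**}|\,\indi_\Omega].
\end{equation*}
On $\Omega$, using continuity of $\Psi$ on $B_K(\si_{\rm sc},2\de)\times[y_*-\eta,y_*+\eta]$ (with a small truncation of the logarithm near $t=z_3'$ to control its singularity), we have
\begin{equation*}
\prod_{j=1}^{N-1}\bigl|\sqrt{\tfrac{N-1}{N}}\la_j-z_3'\bigr|\ \ge\ \exp\Bigl[(N-1)\inf_{\nu\in B_K(\si_{\rm sc},2\de),\,|y-y_*|\le\eta}\Psi(\nu,y)\Bigr].
\end{equation*}
The LDP of \cite{BG97} gives $\pz(L(\la_1^{N-1})\in B_K(\si_{\rm sc},\de))\to 1$; independence of $\GOE_{N-1}$ from $(z_2,z_3)$ factors $\pz(\Omega)$; and the Gaussian density of $z_3'$ at $y_*$ supplies precisely the prefactor matching the $(y+m_2/\sqrt{-4D''(0)})^2$ summand of $\psi_*$.

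Assembling these bounds into \pref{eq:krerr}, taking $\log$, dividing by $N$, and sending $N\to\8$ followed by $\de,\eta\to 0$ and $K\to\8$, the subexponential factors $\sqrt{2/(\pi N)}\,\sfb$, $\eta^2$, and $S_{N-1}N^{(N-1)/2}$ (which contributes $\tfrac12\log(2\pi)+\tfrac12$ by \pref{eq:snlim}) combine with the Gaussian density factors in \pref{eq:krerr} exactly as in the upper bound to yield the limit
\begin{equation*}
\tfrac12+\tfrac12\log[-4D''(0)]-\tfrac12\log D'(0)+\psi_*(\rho_*,u_*,y_*),
\end{equation*}
which equals the claimed bound by the choice of $(\rho_*,u_*,y_*)$. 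The main technical hurdle is ensuring the optimizer can be taken strictly interior to the admissible region so that the localization is valid; this uses \pref{le:psirho0}, continuity of $\psi_*$, and openness of $E$, after which the rest is a routine Laplace-type computation.
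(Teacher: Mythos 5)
Your outline is faithful to the paper's strategy for the \emph{off-bulk} case, but it has a genuine gap at the heart of the lower bound: the case $y_*\in[-\sqrt2,\sqrt2]$. Your key step is the pointwise bound
\begin{equation*}
\prod_{j=1}^{N-1}\bigl|\sqrt{\tfrac{N-1}{N}}\la_j-z_3'\bigr|\ \ge\ \exp\Bigl[(N-1)\inf_{\nu\in B_K(\si_{\rm sc},2\de),\,|y-y_*|\le\eta}\Psi(\nu,y)\Bigr],
\end{equation*}
but when $y$ lies in (or near) the support of the limiting measure this infimum is $-\8$: a measure in the weak ball $B_K(\si_{\rm sc},2\de)$ may carry an atom at $y$, and $\Psi(\nu,\cdot)=\int\log|y-t|\,\nu(\dd t)$ is only \emph{upper} semicontinuous there, not lower semicontinuous. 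The phrase ``a small truncation of the logarithm near $t=z_3'$'' does not repair this: truncating $\log|y-t|$ from below produces an \emph{upper} bound on the product, whereas you need a lower bound, and a single eigenvalue at distance $e^{-cN^2}$ from $y$ (an event not ruled out by the empirical-measure LDP) already destroys the bound at exponential scale $N$. This is not a corner case: by Example \ref{ex:2}, the optimizer is $y_*=-\sqrt2\mu/\sqrt{-2D''(0)}$ whenever $|\mu|\le\sqrt{-2D''(0)}$, which lies in $[-\sqrt2,\sqrt2]$, so your argument fails in the entire non-trivial complexity regime.

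The paper splits into two cases. For $y_*\notin[-\sqrt2,\sqrt2]$ it does essentially what you propose, but crucially restricts the empirical measure to $B_{\sqrt2+\eps_1}(\si_{\rm sc},\de)$, i.e.\ to measures supported in $[-\sqrt2-\eps_1,\sqrt2+\eps_1]$, with $y$ bounded away from that interval; then $\Psi$ is jointly continuous and the infimum is finite. For $y_*\in[-\sqrt2,\sqrt2]$ it abandons the pointwise bound entirely: it integrates over $y\in G(\de_1)$ against the explicit joint density of the unordered GOE eigenvalues, absorbs the factor $\prod_i|x_i-y|$ by treating $y$ as an additional eigenvalue of a $\GOE_N$ matrix, and reduces the estimate to the ratio $Z_N'/Z_{N-1}'$ (Stirling, contributing $-\tfrac12-\tfrac12\log2$) together with Wigner's law for the expected fraction of eigenvalues in $G(\de_1)$; the identity $\Psi_*(y_*)=\tfrac12 y_*^2-\tfrac12-\tfrac12\log2$ on $[-\sqrt2,\sqrt2]$ then recovers exactly $\psi_*(\rho_*,u_*,y_*)$. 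You need this (or an equivalent anti-concentration argument for eigenvalues near $y_*$) to complete the proof; the remaining ingredients of your proposal --- existence and interior localization of the maximizer, the conditional Gaussian lower bound $\sqrt{2/(\pi N)}\,\sfb$ via \pref{eq:absgau} and \pref{eq:z13con0}, and the bookkeeping of the prefactors --- are all correct and match the paper.
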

\begin{proof}
Using \pref{eq:absgau} and \pref{eq:z13con0}, we know
\begin{align}\label{eq:z13con}
&\ez\big[|z_1' - h(z_3') |  |  z_3'=y \big] \ge \sqrt{\frac2\pi}\Big[ \frac{-4D''(0)}{N} +\frac{2D''(0)\al^2\rho^4}{N(-2D''(0)-\bt^2)}\Big]^{1/2},
\end{align}
where $h(z_3')$ only depends on $z_3'$. By conditioning, using \pref{eq:schur} and \pref{eq:ayg}, 
\begin{align*}
&\ez(|\det G| )= \ez(|\det G_{**}| | z_1'-\xi^\sfT G_{**}^{-1} \xi |  )\\
&=[-4D''(0)]^{\frac{N-1}{2}} \ez[ |\det ((\frac{N-1}{N})^{1/2}\GOE_{N-1}-z_3' I_{N-1}) | \ez(  | z_1'-\xi^\sfT G_{**} ^{-1} \xi |  | \GOE_{N-1}, \xi, z_3')]
\\
&\ge  [-4D''(0)]^{\frac{N-1}{2}} \sqrt{\frac2\pi}\Big[ \frac{-4D''(0)}{N} +\frac{2D''(0)\al^2\rho^4}{N(-2D''(0)-\bt^2)} \Big]^{1/2} \\
&\quad  \frac{\sqrt{N(-4D''(0))}}{\sqrt{2\pi (-2D''(0)-\bt^2)} }  \int_{\rz^{N-1}}    \prod_{i=1}^{N-1} \int_\rz |(\frac{N-1}{N})^{1/2}x_i-y| \exp\Big[-\frac{-4ND''(0)(y +\frac{m_2}{\sqrt{-4D''(0)}})^2}{2(-2D''(0)-\bt^2)} \Big]  \dd y\\
&\quad   p_{\GOE}(x_1,...,x_{N-1}) \prod_{i=1}^{N-1} \dd x_i
\end{align*}
where $p_{\GOE}(x_1,...,x_{N-1})$ is the joint density of the unordered eigenvalues of GOE.

Without loss of generality we assume $E$ is non-empty. Choose $(\rho_*,u_*,y_*)$ as that in the proof of \pref{pr:ubi2}; i.e., it is a maximum of $\psi_*(\rho,u,y)$ on $[R_1,R_2]\times \bar E\times \rz$. If there are multiple points for $\psi_*$ to attain its maximum, we just choose one to be $(\rho_*,u_*,y_*)$. Recall that $\rho_*>0$. Then $(\rho_*-\de', \rho_*+\de')\cap [R_1,R_2]$ and $(u_*-\de',u_*+\de')\cap \bar E$ must be non-empty for any $\de'>0$. If $\rho_*$ and $u_*$ are both interior points, we choose $\de'>0$ small enough so that $(\rho_*-\de', \rho_*+\de')\subset (R_1,R_2)$ and $(u_*-\de',u_*+\de')\subset E$. If either $\rho_*$ or $u_*$ is a boundary point, by abuse of notation we still write $(\rho_*-\de', \rho_*+\de')$ and $(u_*-\de',u_*+\de')$ with the understanding that one endpoint should be replaced by $\rho_*$ or $u_*$ so that we always have $(\rho_*-\de', \rho_*+\de')\subset (R_1,R_2)$ and $(u_*-\de',u_*+\de')\subset E$. Using \pref{eq:asmp1}, the right-hand side of \pref{eq:z13con} attains strictly positive minimum for $\rho\in[\rho_*-\de', \rho_*+\de']$.
By restricting to small intervals, we find
\begin{align*}
&\int_{R_1}^{R_2}\int_{E} \ez(|\det G |) \frac1{\sqrt{2\pi}\si_Y} e^{-\frac{(u-m_Y)^2}{2\si_Y^2}}  \frac1{(2\pi)^{N/2} D'(0)^{N/2}} e^{-\frac{N \mu^2 \rho^2}{2D'(0)}} \rho^{N-1} \dd u  \dd\rho\\
&\ge  [-4D''(0)]^{\frac{N-1}{2}} \sqrt{\frac2\pi} \int_{\rho_*-\de'}^{\rho_*+\de'}\int_{u_*-\de'}^{u_*+\de'} \int_{y_*-\de_1}^{y_*+\de_1}   \\
&\quad  \Big[ \frac{-4D''(0)}{N} +\frac{2D''(0)\al^2\rho^4}{N(-2D''(0)-\bt^2)}\Big]^{1/2}
 \frac{\sqrt{N(-4D''(0))}}{\sqrt{2\pi (-2D''(0)-\bt^2)} } \\
 &\quad  \int_{\rz^{N-1}}   \prod_{i=1}^{N-1} |(\frac{N-1}{N})^{1/2} x_i-y| \exp\Big[-\frac{-4ND''(0)(y+\frac{m_2}{\sqrt{-4D''(0)}})^2}{2(-2D''(0)-\bt^2)} \Big]   \\
&\quad   p_{\GOE}(x_1,...,x_{N-1}) \prod_{i=1}^{N-1} \dd x_i \frac1{\sqrt{2\pi}\si_Y} e^{-\frac{(u-m_Y)^2}{2\si_Y^2}}  \frac1{(2\pi)^{N/2} D'(0)^{N/2}} e^{-\frac{N \mu^2 \rho^2}{2D'(0)}} \rho^{N-1} \dd y \dd u  \dd\rho\\
&=: \ex(\de',\de_1),
\end{align*}
where $\de_1>0$ will be specified in the following. We consider two cases.

\emph{Case 1}: $y_* \notin[-\sqrt2,\sqrt2]$. In this case, there exist $\eps_1>0$ small enough so that $y_*\notin [-\sqrt2-3\eps_1, \sqrt2+3\eps_1]$. We can choose $\de_1$ small enough so that $y_*+\de_1< -\sqrt2-2\eps_1$ if  $y_* < -\sqrt2$ or $y_*-\de_1 > \sqrt2+2\eps_1$ if $y_* > \sqrt2$. According to our choice, if $x\in(y_*-\de_1, y_*+\de_1)$, then $x\notin [-\sqrt2-2\eps_1,\sqrt2+2\eps_1]$.  With these considerations in mind, by restricting the empirical measure of GOE eigenvalues to $B_{\sqrt2+\eps_1}(\si_{{\rm sc}},\de)$ first, we find
\begin{align*}
&\ex(\de',\de_1)\ge  [-4D''(0)]^{\frac{N-1}{2}} \sqrt{\frac2\pi} \pz( L((\frac{N-1}{N})^{1/2}\la_1^{N-1})\in B_{\sqrt2+\eps_1}(\si_{{\rm sc}},\de)) \\
&\quad  \int_{\rho_*-\de'}^{\rho_*+\de'}\int_{u_*-\de'}^{u_*+\de'} \int_{y_*-\de_1}^{y_*+\de_1}  e^{(N-1) \inf_{\nu\in B_{\sqrt2+\eps_1} (\si_{\rm sc}, \de)} \Psi(\nu,y)} \exp\Big[-\frac{-4ND''(0)(y+\frac{m_2}{\sqrt{-4D''(0)}})^2}{2(-2D''(0)-\bt^2)} \Big] \\
&\quad  \Big[\frac{-4D''(0)}{N} +\frac{2D''(0)\al^2\rho^4}{N(-2D''(0)-\bt^2)} \Big]^{1/2}
\frac{\sqrt{N(-4D''(0))}}{\sqrt{2\pi (-2D''(0)-\bt^2)} } \\
&\quad   \frac{\sqrt{N} (2\pi)^{-(N+1)/2} D'(0)^{-N/2} }{\sqrt{D(\rho^2)-\frac{D'(\rho^2)^2 \rho^2}{D'(0)} }} \exp\Big(- \frac{N (u - \frac{\mu\rho^2}2+\frac{\mu D'(\rho^2) \rho^2}{D'(0)})^2}{2(D(\rho^2)-\frac{D'(\rho^2)^2 \rho^2}{D'(0)})}\Big)  e^{-\frac{N \mu^2 \rho^2}{2D'(0)}} \rho^{N-1}     \dd y \dd u  \dd\rho.
\end{align*}
Since $\Psi(\nu, y)$ is continuous in $\px[-\sqrt2-\eps_1,\sqrt2+\eps_1] \times (-\sqrt2-2\eps_1, \sqrt2+2\eps_1)^c$, we have
\begin{align*}
\lim_{\de\to0+}\inf_{\nu\in B_{\sqrt2+\eps_1} (\si_{\rm sc},\de)} \Psi(\nu,y) & = \Psi_*( y)
\end{align*}
for all $y\in[y_*-\de_1, y_*+\de_1]$. By Wigner's semicircle law with the distance \pref{eq:measd} and the LDP of the largest eigenvalue of GOE, we have
\begin{align*}
&\liminf_{N\to\8 }\pz( L((\frac{N-1}{N})^{1/2}\la_1^{N-1})\in B_{\sqrt2+\eps_1}(\si_{{\rm sc}},\de)) \\
&\ge \liminf_{N\to\8 } [\pz ( L((\frac{N-1}{N})^{1/2}\la_1^{N-1})\in B(\si_{{\rm sc}},\de))- \pz(\max_{i=1,...,N-1}|(\frac{N-1}{N})^{1/2}\la_i|>\sqrt2+\eps_1)]=1.
\end{align*}
Recall the function $\psi$ as in \eqref{eq:psidef}.  Since the functions in question are all continuous and thus attain strictly positive minimum in $\rho\in[\rho_*-\de',\rho_*+\de'], u\in[u_*-\de',u_*+\de'],y\in[y_*-\de_1, y_*+\de_1]$, using \pref{eq:snlim} and \pref{eq:krerr} we deduce  that
\begin{align}
\liminf_{N\to\8} &\frac1N\log \ez\Crt_{N}(E,(R_1, R_2))  \ge  \liminf_{\de'\to0+, \atop\de_1\to0+}  \liminf_{N\to\8} \frac1N\log \ex(\de',\de_1) + \frac12+\frac12\log(2\pi) \notag\\
&\ge  \frac12 +\frac12 \log [-4D''(0)] -\frac12 \log D'(0) \notag\\
& \ \ + \liminf_{\de\to0+,\de'\to0+, \atop\de_1\to0+}  \inf_{\rho\in[\rho_*-\de',\rho_*+\de'],\atop u\in[u_*-\de',u_*+\de'], y\in[y_*-\de_1, y_*+\de_1] }[ \psi_*(\rho,u,y)-\Psi_*(y)+\inf_{\nu\in B_{\sqrt2+\eps_1} (\si_{\rm sc},\de)} \Psi(\nu,y)] \notag\\
&= \frac12 +\frac12 \log [-4D''(0)] -\frac12 \log D'(0) +\psi_*(\rho_*,u_*,y_*).\label{eq:lbdt1}
\end{align}

\emph{Case 2}: $y_*
\in[-\sqrt2,\sqrt2]$. In this case, we can choose $\de_1>0$ small such that 
$G(\de_1):=(y_*-\de_1, y_*+\de_1)\cap (-\sqrt2,\sqrt2) \neq \emptyset$.  
Choosing $K$ large we find
\begin{align*}
& \int_{G(\de_1)}	\ez[e^{(N-1)\Psi(L((\frac{N-1}{N})^{1/2}\la_1^{N-1}), y)}] \exp\Big[-\frac{-4ND''(0)(y+\frac{m_2}{\sqrt{-4D''(0)}})^2}{2(-2D''(0)-\bt^2)} \Big]  \dd y\\
&\ge \frac1{Z'_{N-1}}\int_{G(\de_1)} \int_{[-(\frac{N}{N-1})^{1/2}K,(\frac{N}{N-1})^{1/2}K]^{N-1}}  \exp\Big[-\frac{-4ND''(0)(y +\frac{m_2}{\sqrt{-4D''(0)}})^2}{2(-2D''(0)-\bt^2)} \Big]   \\
&\qquad \prod_{i=1}^{N-1}|(\frac{N-1}{N})^{1/2} x_i-y| \prod_{1\le i<j\le N-1} |x_i-x_j| e^{-\frac{N-1}{2}\sum_{i=1}^{N-1}x_i^2} \prod_{i=1}^{N-1} \dd x_i \dd y\\
&\stackrel{(\frac{N-1}{N})^{1/2} x_i\mapsto x_i}{\scalebox{7}[1]{=}} \frac1{Z'_{N-1}} \Bigl(\frac{N}{N-1}\Bigr)^{\frac{N(N-1)}{4}}\int_{x_N\in G(\de_1)} \exp\Big[-\frac{-4ND''(0)(x_N+ \frac{m_2}{\sqrt{-4D''(0)}})^2}{2(-2D''(0)-\bt^2)} \Big] \\
&\qquad  \int_{[-K,K]^{N-1}}  \prod_{1\le i<j\le N} |x_i-x_j| e^{-\frac{N}2\sum_{i=1}^N x_i^2}  e^{\frac{N}{2} x_N^2} \prod_{i=1}^N \dd x_i\\
&\ge \frac{Z'_N}{Z'_{N-1}}\frac{1}{Z'_N} \Big(\frac{N}{N-1}\Big)^{\frac{N(N-1)}{4}}  \exp\Big[ N \min_{x\in G(\de_1)}\Big(\frac{ x ^2}{2}   -\frac{-4 D''(0)(x +\frac{m_2}{\sqrt{-4D''(0)}})^2}{2(-2D''(0)-\bt^2)}  \Big)\Big] \\
&\qquad  \int_{x_N\in  G(\de_1)} \int_{[-K,K]^{N-1}} \prod_{1\le i< j\le N} |x_i-x_j| e^{-\frac{N}2\sum_{i=1}^N x_i^2 }\prod_{i=1}^N \dd x_i \\
&= \frac{Z'_N}{Z'_{N-1}} \Big(\frac{N}{N-1}\Big)^{\frac{N(N-1)}{4}}  \exp\Big[ N \min_{x\in G(\de_1)}\Big(\frac{ x ^2}{2}   -\frac{-4ND''(0)(x+\frac{m_2}{\sqrt{-4D''(0)}})^2}{2(-2D''(0)-\bt^2)}  \Big)\Big] \\
&\qquad \ez\Big[\frac1N \#\{ i\in[N]: \tilde \la_i^N \in  G(\de_1)\} \indi\{\max_{i=1,...,N} |\tilde\la_i^N| \le  K\}\Big].
\end{align*}
Here $Z_N'=N!Z_N$ is the normalizing constant for the p.d.f.~of unordered eigenvalues of $\GOE_N$ matrix. By Stirling's formula,
\[
\lim_{N\to\8} \frac1N\log \Big[\frac{Z'_N}{Z'_{N-1}} \Big(\frac{N}{N-1}\Big)^{\frac{N(N-1)}{4}}\Big]=-\frac12-\frac12\log2.
\]
From Wigner's semicircle law we deduce
\begin{align*}
&\liminf_{N\to\8} \frac1N\log\ez\Big[\frac1N \#\{ i\in[N]: \tilde \la_i^N \in G(\de_1)\} \indi\{\max_{i=1,...,N} |\la_i| \le K\}\Big]\\
& = \lim_{N\to\8}\frac1N\log \si_{\rm sc}[G(\de_1)]=0.
\end{align*}
Since the functions in question are all continuous and thus attains strictly positive minimum in $\rho\in[\rho_*-\de',\rho_*+\de'], u\in[u_*-\de',u_*+\de'],y\in[y_*-\de_1, y_*+\de_1]$, using \pref{eq:snlim}  and \pref{eq:krerr} we deduce that
\begin{align}
\liminf_{N\to\8} &\frac1N\log \ez\Crt_{N}(E,(R_1, R_2))  \ge  \liminf_{\de'\to0+, \atop\de_1\to0+}  \liminf_{N\to\8} \frac1N\log \ex(\de',\de_1) + \frac12+\frac12\log(2\pi) \notag \\
&\ge  \frac12 +\frac12 \log [-4D''(0)] -\frac12 \log D'(0)-\frac12-\frac12\log2 + \liminf_{\de'\to0+, \atop\de_1\to0+}  \inf_{\rho\in[\rho_*-\de',\rho_*+\de'],\atop u\in[u_*-\de',u_*+\de'], x\in G(\de_1)}    \notag \\
& \quad  \Big[ \frac{ x ^2}{2} -\frac{-4ND''(0)(x +\frac{m_2}{\sqrt{-4D''(0)}})^2}{2(-2D''(0)-\bt^2)} -\frac{(u-\frac{\mu\rho^2}{2}+\frac{\mu D'(\rho^2) \rho^2}{D'(0) } )^2}{ 2\Big(D(\rho^2)-\frac{D'(\rho^2)^2 \rho^2}{D'(0)} \Big)} -\frac{\mu^2\rho^2}{2D'(0)}+\log \rho  \Big]\notag \\
&= \frac12 +\frac12 \log [-4D''(0)] -\frac12 \log D'(0) +\psi_*(\rho_*,u_*,y_*).\label{eq:lbdt2}
\end{align}
Here in the last step, we used the fact \pref{eq:phi*} that $\Psi_*(y_*) = \frac12 y_*^2-\frac12-\frac12\log2$ as $y_*\in[-\sqrt2,\sqrt2]$.
\end{proof}

%
 \begin{proof}[Proof of \pref{th:cpsublevel}]
 	If $\bar E$ is compact and $0\le R_1<R_2<\8$, the assertion follows from \pref{eq:snlim}, Propositions \ref{pr:ubi2}, \ref{pr:ubi1} and \ref{pr:lbd*}.
 	
 	Suppose $\bar E$ is not compact or $R_2=\8$.
 Thanks to Lemmas \ref{le:psirho0} and \ref{le:exptt}, we may choose $R<\8$ and $T<\8$ large enough such that
 	\begin{align*}
 	  &\lim_{N\to\8} \frac1N \log\ez \Crt_{N}(E, (R_1,R_2)) =	\lim_{N\to\8} \frac1N \log\ez \Crt_{N}(E \cap(-T,T), (R_1,R_2)\cap [0,R])\\
 &=\frac12 \log[-4D''(0)] -\frac12\log D'(0) +\frac12+\sup_{y\in \rz, R_1< \rho<R\wedge R_2, u\in \bar E\cap [-T, T], }\psi_*(\rho,u,y)\\
 &=\frac12 \log[-4D''(0)] -\frac12\log D'(0) +\frac12+\sup_{(\rho,u,y)\in F}\psi_*(\rho,u,y),
 	\end{align*}
  which completes the proof.
 \end{proof}

We finish this section by showing how to recover Theorem \ref{th:ttcpx} from Theorem \ref{th:cpsublevel} when the domain of field is confined in a shell.
\begin{example}\label{ex:2}
	\rm
Let $0\le R_1<R_2\le\8$ and $E=\rz$. This removes restriction on the range of the random field. Let $J=\sqrt{-2D''(0)}$. Using \pref{eq:msialbt} and \pref{eq:uvcov},
we rewrite
\begin{align*}
\psi_*(\rho,u,y )= \Psi_*(y) -\frac{J^2}{J^2-\bt^2} \Big(y+\frac{\mu}{\sqrt2J}+\frac{\bt v}{\sqrt2 J}\Big)^2 -\frac{v^2}{2} -\frac{\mu^2\rho^2}{2D'(0)}+\log \rho.
\end{align*}
From \pref{eq:phi*}, we calculate
\begin{align*}
\partial_y \psi_*&=\frac{-(\bt^2+J^2)y-\sqrt2J (\mu+\bt v)}{J^2-\bt^2}-\sgn(y)\sqrt{y^2-2}\indi\{|y|>\sqrt2\},\\
\partial_{yy}\psi_*&=-\frac{J^2+\bt^2}{J^2-\bt^2} -\frac{|y|}{\sqrt{y^2-2}}\indi\{|y|>\sqrt2\}, \\
\partial_v\psi_*&= \frac{-J^2v -\bt (\sqrt2Jy +\mu )}{J^2-\bt^2},\  \
\partial_{yv}\psi_* =-\frac{\sqrt2 J\bt}{J^2-\bt^2},\  \
\partial_{vv}\psi_* = -\frac{J^2}{J^2-\bt^2}.
\end{align*}
Using the relation $\partial_v\psi_*=0$ we find
\begin{align}\label{eq:musbt}
    v=-\frac{\bt(\sqrt2Jy+\mu)}{J^2}, \ \ \sqrt2Jy+\mu+ \bt v = \frac{(\sqrt2Jy +\mu)(J^2-\bt^2)}{J^2}.
\end{align}
Together with \pref{eq:phi*}, we can eliminate $v$ and simplify
\begin{align}\label{eq:psids0}
  \psi_*(\rho,u,y )= -\frac12y^2-\frac12-\frac12\log2 -J_1(-|y|)\indi\{|y|>\sqrt2\}-\frac{\sqrt2 \mu y}{J}-\frac{\mu^2}{2J^2}-\frac{\mu^2\rho^2}{2D'(0)}+\log \rho.
\end{align}

\emph{Case 1}: $\mu\neq0$.  Solving $\partial_y \psi_*=0, \partial_v\psi_*=0$ gives (after removing an extraneous solution)
\begin{align*}
\begin{cases}
y=-\frac{\sqrt2 \mu}{J},\ \ v=\frac{\mu \bt}{J^2}, & |\mu|\le J,\\
y=-\frac1{\sqrt2}(\frac\mu{J}+\frac{J}\mu), \ \ v=\frac\bt\mu, &|\mu|>J.
\end{cases}
\end{align*}
From \pref{eq:asmp1} we know $J^2-\bt^2>0$ for $\rho>0$. By the second derivative test, this critical point is the unique global maximum. Moreover, plugging in the critical point reveals that
$$\Psi_*(y) -\frac{J^2}{J^2-\bt^2} \Big(y+\frac{\mu}{\sqrt2J}+\frac{\bt v}{\sqrt2 J}\Big)^2 -\frac{v^2}{2}$$ does not depend on $\rho$. As a result, we choose $\rho$ by optimizing $-\frac{\mu^2\rho^2}{2D'(0)}+\log \rho$. Let us consider $R_1<\sqrt{D'(0)}/|\mu|$ only; the other case is similar. Choose
\begin{align}\label{eq:rho*}
\rho_*=\begin{cases}
  \frac{\sqrt{D'(0)}}{|\mu|},& \text{ if } R_2>\frac{\sqrt{D'(0)}}{|\mu|},\\
  R_2,&\text{ otherwise}.
\end{cases}
\end{align}
If $|\mu|\le \sqrt{-2D''(0)}$, we take $y_*=-\mu/\sqrt{-D''(0)}$,  and 	
\begin{align}\label{eq:u*1}
u_*= \frac{\mu[D'(\rho_*^2)-D'(0)]}{-2D''(0)}+\frac{\mu\rho_*^2}{2}-\frac{\mu D'(\rho_*^2) \rho_*^2}{D'(0) } .
\end{align}
Then we find
\begin{align}\label{eq:psi*1}
	\psi_*(\rho_*,u_*,y_* )=\begin{cases}
  \frac{\mu^2}{-4D''(0)}-1-\frac12\log2 +\frac12\log D'(0)-\log |\mu|, &\mbox{if }  R_2>\frac{\sqrt{D'(0)}}{|\mu|},\\
  \frac{\mu^2}{-4D''(0)}-\frac12-\frac12\log2 +\log R_2- \frac{\mu^2R_2^2}{2D'(0)}, & \text{otherwise}.
\end{cases}
\end{align}
If $|\mu|>\sqrt{-2D''(0)}$, we take $y_*=-\frac{\mu}{\sqrt{-4D''(0)}} -\frac{\sqrt{-D''(0)}}{\mu}$,
\begin{align}\label{eq:u*2}
u_*=\frac{D'(\rho_*^2)-D'(0)}{\mu }+\frac{\mu\rho_*^2}{2}-\frac{\mu D'(\rho_*^2) \rho_*^2}{D'(0) }.
\end{align}
Then we find
\begin{align}\label{eq:psi*2}
&\psi_*(\rho_*,u_*,y_*) \notag\\
&=
\begin{cases}
  -\frac12\log2-\log\sqrt{-2D''(0)}-\frac12+\frac12\log D'(0), & \mbox{if } R_2>\frac{\sqrt{D'(0)}}{|\mu|}, \\
  -\frac12\log2-\log\sqrt{-2D''(0)}+\log|\mu| +\log R_2- \frac{\mu^2R_2^2}{2D'(0)}, & \mbox{otherwise}.
\end{cases}
\end{align}
Since $B_N=\{x\in \rz^N: \sqrt N R_1< \|x\|<\sqrt N R_2\}$, using Cramer's theorem for the chi-square distribution, we have
\[
-\Xi=\begin{cases}
      -\frac{\mu^2 R_2^2}{2D'(0)}+\frac12+\log R_2+\log |\mu|-\frac12\log D'(0) , & \mbox{if }  R_2<\frac{\sqrt{D'(0)}}{|\mu|},\\
       0, & \mbox{otherwise}.
     \end{cases}
\]
where $\Xi$ is defined as in \pref{eq:bnasp1}.

\emph{Case 2}: $\mu=0$. We have to assume $R_2<\8$. Then the above computations show that $\psi_*$ is optimized at $y_*=u_*=0$ and $\rho_*=R_2$ which gives
\[
	\lim_{N\to\8} \frac1N \log\ez \Crt_{N}(\rz, (R_1,R_2)) = \frac12 \log[-2D''(0)] -\frac12\log D'(0) +\log R_2.
\]
In addition, $\Theta=\lim_{N\to\8} \frac1N \log|B_N|= \log R_2+\frac12\log(2\pi)+\frac12$.

Our results here match all the three cases in \pref{th:ttcpx}. Therefore, this example explains the seemingly very different forms of the three phases, whose origin is hard to understand without the general \pref{th:cpsublevel}. Moreover, this suggests that the critical points around the value $u_*$ and variable $\rho_*$ given above dominate all other places.
\end{example}

\begin{appendix}
\section{Covariance function and its derivatives}
Let $D_N(r)=D(r/N)$. For $x,y\in \rz^N$, let $\varphi(x,y) =\frac12(D_N(\|x\|^2)+D_N(\|y\|^2)-D_N(\|x-y\|^2))$.
Under $X_N(0)=0$, isotropic increments imply that $\ez X_N(x)=0$; see \cite{Ya87}*{p.439}. We have
\[
\Cov[H_N(x), H_N(y)]=\Cov[X_N(x),X_N(y)]=\ez[X_N(x)X_N(y)]=\varphi(x,y).
\]
\begin{lemma}\label{le:cov}
  Assume Assumptions I and II. Then for $x\in \rz^N$,
  \begin{align*}
    \Cov[H_N(x), \partial_i H_N(x)]&= D'\left(\frac{\|x\|^2}N\right)x_i,\\
  \Cov[\partial_i H_N(x),\partial_j H_N(x)] &= D'(0)\de_{ij},\\
  \Cov[H_N(x),\partial_{ij} H_N(x)]&= 2D''\left(\frac{\|x\|^2}N\right)\frac{x_ix_j}N +\left[D'\left(\frac{\|x\|^2}N\right)-D'(0)\right]\de_{ij}\\
  \Cov[\partial_k H_N(x), \partial_{ij} H_N(x)]&= 0,\\
  \Cov[\partial_{lk} H_N(x), \partial_{ij} H_N(x)]&= -2D''(0)[\de_{jl}\de_{ik}+\de_{il}\de_{kj} +\de_{kl}\de_{ij}]/N,
  \end{align*}
  where $\de_{ij}$ are the Kronecker delta function.
\end{lemma}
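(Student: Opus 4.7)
The plan is to exploit the standard fact that for a smooth centered Gaussian field $X_N$ with covariance kernel $K(x,y)=\ez[X_N(x)X_N(y)]$, the covariance of any two partial derivatives of $X_N$ is obtained by differentiating $K$ under the expectation:
\[
\Cov\bigl[\partial^\alpha X_N(x),\,\partial^\beta X_N(y)\bigr]=\partial_x^{\alpha}\partial_y^{\beta}K(x,y).
\]
Assumption I ensures $K$ has enough smoothness (four derivatives at the diagonal) for all the derivatives appearing in the lemma to be valid. Since $H_N(x)-X_N(x)=\frac{\mu}{2}\|x\|^2$ is deterministic, every covariance in the statement is the same with $X_N$ in place of $H_N$, so it suffices to work with $K(x,y)=\varphi(x,y)$.

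First I would rewrite $\varphi(x,y)$ as the polarization of the structure function: using Assumption II together with the isotropic increments property, one has $\ez X_N(x)=0$ and $\Var X_N(x)=\ez[(X_N(x)-X_N(0))^2]$ equal (up to the normalization convention) to the function of $\|x\|^2$ appearing in $\varphi$, and hence
\[
K(x,y)=\tfrac12\bigl[\Var X_N(x)+\Var X_N(y)-\ez(X_N(x)-X_N(y))^2\bigr]
\]
matches $\varphi(x,y)$. Then I would apply the formula above for each pair $(\alpha,\beta)\in\{(0,e_i),(e_i,e_j),(0,e_i+e_j),(e_k,e_i+e_j),(e_k+e_l,e_i+e_j)\}$ and evaluate at $y=x$.

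The actual computation is a routine application of the chain rule, using the two basic identities
\[
\partial_{y_i}\|x-y\|^2=-2(x_i-y_i),\qquad \partial_{y_i y_j}\|x-y\|^2=2\delta_{ij},
\]
together with $D_N(0)=D(0)=0$, $D_N^{(k)}(r)=N^{-k}D^{(k)}(r/N)$, and $\|x-y\|^2=0$ at $y=x$. The only mildly delicate point is keeping track of which derivatives hit the $D_N(\|x\|^2)$ and $D_N(\|y\|^2)$ terms (these contribute only when the multi-index acts on the corresponding variable) versus the cross term $D_N(\|x-y\|^2)$ (which contributes $D'(0)$ or $D''(0)$ after evaluation at $y=x$). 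For instance, a typical computation: when $\alpha=0$ and $\beta=e_i+e_j$, one has
\[
\partial_{y_i y_j}\varphi
=\tfrac12\bigl[\partial_{y_i y_j}D_N(\|y\|^2)-\partial_{y_i y_j}D_N(\|x-y\|^2)\bigr],
\]
and each piece expands through the chain rule into a term with $D_N''$ times a product of $y$ or $(x-y)$ coordinates plus a term with $D_N'$ times $\delta_{ij}$; setting $y=x$ and converting $D_N^{(k)}$ to $D^{(k)}$ yields the stated expression $2D''(\|x\|^2/N)x_ix_j/N+[D'(\|x\|^2/N)-D'(0)]\delta_{ij}$. The remaining four formulas follow the same template, with the symmetry $\partial_{y_i}D_N(\|x-y\|^2)|_{y=x}=0$ explaining why $\Cov[\partial_k H_N(x),\partial_{ij}H_N(x)]=0$. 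The completely symmetric rank-4 tensor $\delta_{ij}\delta_{kl}+\delta_{ik}\delta_{jl}+\delta_{il}\delta_{jk}$ arises from differentiating $D_N(\|x-y\|^2)$ four times and collecting all pairings.

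The mean formulas are immediate: $\ez X_N(x)=0$ gives $\ez H_N(x)=\frac{\mu}{2}\|x\|^2$, and differentiating under the expectation (justified by the Gaussian tail bounds together with Assumption I) yields $\ez(\nabla H_N(x))=\mu x$ and $\ez(\nabla^2 H_N(x))=\mu I_N$. There is no genuine obstacle to this proof; the main care needed is bookkeeping of the chain rule factors of $N$ coming from $D_N(r)=D(r/N)$, and of the sign that arises from $y\mapsto x-y$ when differentiating the cross term.
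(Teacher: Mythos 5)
Your proposal is correct and follows essentially the same route as the paper's proof: identify the covariance kernel $\varphi(x,y)$ of $X_N$ from the structure function, justify differentiating under the expectation via the smoothness guaranteed by Assumption I, compute the mixed partial derivatives of $\varphi$ by the chain rule, and evaluate on the diagonal $y=x$ (where the paper carries out all five cases explicitly and you work one representative case). The one point worth double-checking in your write-up is the normalization: the polarization identity gives $\ez[X_N(x)X_N(y)]=\frac N2\bigl[D(\tfrac{\|x\|^2}N)+D(\tfrac{\|y\|^2}N)-D(\tfrac{\|x-y\|^2}N)\bigr]$, so the overall factor of $N$ must be tracked consistently alongside the factors $N^{-k}$ from $D_N^{(k)}$ to land on the stated $1/N$ scalings.
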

\begin{proof}
By \cite{AT07}*{Theorem 1.4.2}, $X_N(x)$ is smooth. We can differentiate inside expectation as in \cite{AT07}*{(5.5.4)} and find
\begin{align*}
  \ez[X_N(x)\partial_i X_N(y)]/N& = \partial_{y_i} \ez(X_N(x) X_N(y))/N=D_N'(\|y\|^2)y_i +D_N'(\|x-y\|^2)(x_i-y_i),\\
  \ez[\partial_i X_N(x)\partial_j X_N(y)]/N&= \partial_{x_i} [ D_N'(\|x-y\|^2)(x_j-y_j)]\\
  &=2D_N''(\|x-y\|^2)(x_i-y_i)(x_j-y_j)+ D_N'(\|x-y\|^2)\de_{ij},\\
  \ez[ X_N(x)\partial_{ij} X_N(y)]/N&= \partial_{y_i} [ D_N'(\|y\|^2) y_j +D_N'(\|x-y\|^2) (x_j-y_j)]\\
  &=2D_N''(\|y\|^2)y_i y_j+ D_N'(\|y\|^2)\de_{ij} -2D_N''(\|x-y\|^2)(x_i-y_i)(x_j-y_j) \\ &\hspace{5ex}  -D_N'(\|x-y\|^2)\de_{ij},\\
  \ez[\partial_k X_N(x)\partial_{ij} X_N(y)]/N& =-4D_N'''(\|x-y\|^2)(x_k-y_k)(x_i-y_i)(x_j-y_j) \\
  &\hspace{-19ex} -2D_N''(\|x-y\|^2)(x_j-y_j)\de_{ki} -2D_N''(\|x-y\|^2)(x_i-y_i)\de_{kj} -2D_N''(\|x-y\|^2)(x_k-y_k)\de_{ij},\\
  \ez[\partial_{lk} X_N(x)\partial_{ij} X_N(y)]/N & =-8D_N^{(4)}(\|x-y\|^2)(x_l-y_l)(x_k-y_k)(x_i-y_i)(x_j-y_j) \\
  &\hspace{-19ex} -4D_N'''(\|x-y\|^2)[(x_i-y_i)(x_j-y_j)\de_{kl} +(x_k-y_k)(x_j-y_j)\de_{il}+(x_k-y_k)(x_i-y_i)\de_{jl}\\
  & +(x_l-y_l)(x_j-y_j)\de_{ki} +(x_l-y_l)(x_i-y_i)\de_{kj} +(x_l-y_l)(x_k-y_k)\de_{ij}]\\
  & -2D_N''(\|x-y\|^2)[\de_{jl}\de_{ik}+\de_{il}\de_{kj} +\de_{kl}\de_{ij}].
\end{align*}
Substituting $x=y$,
\begin{align*}
  \ez[X_N(x)\partial_i X_N(x)]/N&=D'_N(\|x\|^2)x_i,\\
  \ez[\partial_i X_N(x)\partial_j X_N(x)]/N &= D'_N(0)\de_{ij},\\
  \ez[X_N(x)\partial_{ij} X_N(x)]/N&= 2D_N''(\|x\|^2)x_ix_j +D_N'(\|x\|^2)\de_{ij}-D'_N(0)\de_{ij}\\
  \ez[\partial_k X_N(x)\partial_{ij} X_N(x)]/N&= 0,\\
  \ez[\partial_{lk} X_N(x)\partial_{ij} X_N(x)]/N&= -2D_N''(0)[\de_{jl}\de_{ik}+\de_{il}\de_{kj} +\de_{kl}\de_{ij}].
\end{align*}
Then we note that $D'_N(r)=D'(r/N)/N$ and $D_N''(r)=D''(r/N)/N^2$.
\end{proof}

\section{Auxiliary Lemmas}\label{se:aux}

For the integral $\ez \int_\rz \exp\big(-\frac12(N+1)x^2 -\frac{\sqrt{N(N+1)}\mu x}{\sqrt{-D''(0)}}\big)  L_{N+1}(\dd x)$, we have the following elementary fact which is used in \pref{se:whole}.
\begin{lemma}\label{le:repl}
Let $\nu_N$ be probability measures on $\rz$ and $\mu\neq0$. Suppose
$$\lim_{N\to\8} \frac1N\log \int_\rz e^{-\frac12(N+1)x^2 -\frac{(N+1)\mu x}{\sqrt{-D''(0)}} }\nu_{N+1}(\dd x) >-\8.$$
Then we have
\begin{align*}
\lim_{N\to \8} \frac1N \Big(\log \int_\rz e^{-\frac12(N+1)x^2 -\frac{\sqrt{N(N+1)}\mu x}{\sqrt{-D''(0)}} } \nu_{N+1}(\dd x)
- \log \int_\rz e^{-\frac12(N+1)x^2 -\frac{(N+1)\mu x}{\sqrt{-D''(0)}} }\nu_{N+1}(\dd x)\Big)=0.
\end{align*}
\end{lemma}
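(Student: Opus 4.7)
The plan is to absorb the change from $\sqrt{N(N+1)}$ to $N+1$ into a bounded linear perturbation of the exponent and run a Laplace-type bounded-perturbation comparison. Set $c=\mu/\sqrt{-D''(0)}\neq 0$ and denote
\[
A_N=\int_\rz e^{-\frac12(N+1)x^2-\sqrt{N(N+1)}\,cx}\,\nu_{N+1}(\dd x),\qquad
B_N=\int_\rz e^{-\frac12(N+1)x^2-(N+1)\,cx}\,\nu_{N+1}(\dd x).
\]
First I would observe that
\[
(N+1)-\sqrt{N(N+1)}=\frac{\sqrt{N+1}}{\sqrt{N+1}+\sqrt N}\longrightarrow\tfrac12,
\]
so $\delta_N:=c[(N+1)-\sqrt{N(N+1)}]$ is uniformly bounded, say $M:=\sup_N|\delta_N|<\8$. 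Writing $\phi_N(x):=-\tfrac12(N+1)x^2-(N+1)cx$, we have $A_N=\int_\rz e^{\delta_N x}e^{\phi_N(x)}\,\nu_{N+1}(\dd x)$, so it suffices to prove $\frac1N\log(A_N/B_N)\to 0$.

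Next I would split each integral at $|x|=L$ for a large constant $L$ to be chosen later. On the inner region $\{|x|\le L\}$, the factor $e^{\delta_N x}$ lies in $[e^{-ML},e^{ML}]$, contributing only a bounded multiplicative factor. For the tail $\{|x|>L\}$, two successive completions of the square yield the uniform bound
\[
\phi_N(x)+M|x|\le -\tfrac14(N+1)(x+c)^2+\tfrac12(N+1)c^2+O(1),
\]
so for $L>2|c|$ and $|x|>L$, using that $\nu_{N+1}$ is a probability measure,
\[
\int_{|x|>L}e^{M|x|+\phi_N(x)}\,\nu_{N+1}(\dd x)\le \exp\bigl(-(N+1)(L^2/16-c^2/2)+O(1)\bigr).
\]

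Finally I would invoke the hypothesis $\lim\tfrac1N\log B_N>-\8$ to pick $C>0$ with $B_N\ge e^{-CN}$ for all large $N$, and then fix $L$ so large that $L^2/16-c^2/2>C+1$. With this choice of $L$, both the tail above and its $M=0$ analogue for $B_N$ itself are at most $e^{-N}B_N$ for $N$ large, yielding the sandwich
\[
\tfrac12 e^{-ML}\,B_N\le A_N\le (e^{ML}+1)\,B_N
\]
for $N$ large. Dividing by $B_N$, taking logarithms, and dividing by $N$ shows $\frac1N\log(A_N/B_N)\to 0$, as required.

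The main (and only) obstacle is the uniform-in-$x$ tail control on $\phi_N(x)+M|x|$ for $|x|>L$; the two completions of the square above reduce it to a quadratic lower bound on $(x+c)^2$, after which the hypothesised lower bound on $B_N$ absorbs the tail against the main contribution. Everything else is routine bookkeeping.
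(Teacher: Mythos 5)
Your proof is correct, and it reaches the conclusion by a genuinely different (and somewhat more streamlined) decomposition than the paper's. The paper introduces a third integral $c_N$ with exponent $-\frac12 Nx^2-\frac{N\mu x}{\sqrt{-D''(0)}}$, sandwiches the mixed integral $b_N$ between the two ``pure'' ones via $b_N\le a_N+c_N$ (splitting at $x=0$ and using $N\le\sqrt{N(N+1)}\le N+1$), relates $a_N$ to $c_N$ through Jensen's inequality and the a priori bound $a_N\le e^{(N+1)\mu^2/(-2D''(0))}$, and then gets the matching lower bound on $b_N$ by truncating to $[-t,t]$ and invoking the hypothesis to show the truncation preserves the exponential rate. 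You instead observe that the two exponents differ by $\delta_N x$ with $\delta_N=c\bigl[(N+1)-\sqrt{N(N+1)}\bigr]\to c/2$ uniformly bounded, so the whole comparison reduces to a single bounded linear perturbation; one truncation at $|x|=L$ then handles both directions at once, with the hypothesis used exactly as in the paper to make the Gaussian tail negligible against $B_N$. Your route avoids the auxiliary quantity $c_N$ and the Jensen step entirely, at the cost of the (routine) completion-of-squares tail estimate; both arguments ultimately rest on the same two facts, namely that the perturbation of the exponent is $O(|x|)$ uniformly in $N$, and that the hypothesised lower bound on $B_N$ absorbs the Gaussian tail.
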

\begin{proof}
Let
\begin{align*}
 a_N &=  \int_\rz e^{-\frac12(N+1)x^2 -\frac{(N+1)\mu x}{\sqrt{-D''(0)}} }\nu_{N+1}(\dd x),\\ 
 b_N &=  \int_\rz e^{-\frac12(N+1)x^2 -\frac{\sqrt{N(N+1)}\mu x}{\sqrt{-D''(0)}} } \nu_{N+1}(\dd x) ,
\\ 
c_N&=\int_\rz e^{-\frac12Nx^2 -\frac{N\mu x}{\sqrt{-D''(0)}} }\nu_{N+1}(\dd x).
\end{align*}
We claim $\lim_{N\to \8} \frac1N \log \frac{ a_N}{c_{N}}=0$. Indeed, by Jensen's inequality,
\begin{align*}
  \log\frac{ c_{N}}{ a_{N}}\le \log \frac{ a_N^{N/(N+1)}}{ a_{N}}=-\frac1{N+1}\log  a_N.
\end{align*}
But
\[
 a_N =\int_\rz e^{-\frac12(N+1)(x+\frac{\mu}{\sqrt{-D''(0)}})^2 +\frac{(N+1)\mu^2}{-2D''(0)}} \nu_{N+1}(\dd x)\le e^{\frac{(N+1)\mu^2}{-2D''(0)}}.
\]
Then the claim follows. From the elementary inequality $a\wedge b \le (a+b)/2 \le a\vee b$, we have $\lim_{N\to\8} \frac1N (\log ( a_{N} + c_{N})-\log  a_N )=0$. It remains to prove that
\[
\lim_{N\to\8} \frac1N (\log ( a_{N} +  c_{N}) - \log  b_N)=0.
\]
Note that
\begin{align*}
  b_N & \le \int_{-\8}^{0} e^{-\frac12(N+1)x^2 -\frac{(N+1)\mu x}{\sqrt{-D''(0)}} } \nu_{N+1}(\dd x) + \int_{0}^{\8} e^{-\frac12 Nx^2 -\frac{N\mu x}{\sqrt{-D''(0)}} } \nu_{N+1}(\dd x) \le a_N + c_N.
\end{align*}
Let $t $ be a large constant (independent of $N$) such that
\[
\lim_{N\to\8} \frac1N\log \int_\rz e^{-\frac12(N+1)x^2 -\frac{(N+1)\mu x}{\sqrt{-D''(0)}} }\nu_{N+1}(\dd x) > - \frac{t^2}{8}
\]
and that
\[
\int_{|x|>t} e^{-\frac12(N+1)x^2 -\frac{(N+1)\mu x}{\sqrt{-D''(0)}} } \nu_{N+1}(\dd x) \le e^{-(N+1)t^2/4}.
\]
It follows that
\[
\int_{|x|>t}  e^{-\frac12 Nx^2 -\frac{N\mu x}{\sqrt{-D''(0)}} } \nu_{N+1}(\dd x) \le e^{-Nt^2/4},
\]
and since $\frac1N \log \frac{a_N}{c_N}\to 0$ as $N\to\8$,
\begin{align*}
&\lim_{N\to\8} \frac1N \log\int_{-t}^t e^{-\frac12 Nx^2 -\frac{N\mu x}{\sqrt{-D''(0)}} } \nu_{N+1}(\dd x)\\
&= \lim_{N\to\8} \frac1N \log \int_{-\8}^\8 (1-\indi\{|x|>t\})e^{-\frac12 Nx^2 -\frac{N\mu x}{\sqrt{-D''(0)}} } \nu_{N+1}(\dd x)\\
&=\lim_{N\to\8} \frac1N \log \int_{-\8}^\8  e^{-\frac12 Nx^2 -\frac{N\mu x}{\sqrt{-D''(0)}} } \nu_{N+1}(\dd x).
\end{align*}
Note that
\begin{align*}
b_N &\ge e^{-\frac{t^2}2}\int_{-t}^{0} e^{-\frac12 Nx^2 -\frac{N\mu x}{\sqrt{-D''(0)}} } \nu_{N+1}(\dd x) + e^{-\frac{t^2}2 -\frac{\mu t}{\sqrt{-D''(0)}}}\int_{0}^{t} e^{-\frac12 Nx^2 -\frac{N\mu x}{\sqrt{-D''(0)}} } \nu_{N+1}(\dd x)\\
&\ge  e^{-\frac{t^2}2 -\frac{\mu t}{\sqrt{-D''(0)}}} \int_{-t}^t e^{-\frac12 Nx^2 -\frac{N\mu x}{\sqrt{-D''(0)}} } \nu_{N+1}(\dd x).
\end{align*}
Since
\[
\lim_{N\to\8} \frac1N\Big( \log (a_N+c_N)-  \log\int_{-t}^t e^{-\frac12 Nx^2 -\frac{N\mu x}{\sqrt{-D''(0)}} } \nu_{N+1}(\dd x)\Big)=0,
\]
we have $\lim_{N\to\8} \frac1N (\log ( a_{N} +  c_{N}) - \log  b_N)=0$.
%
\end{proof}

The following discussion is about Assumption IV.

\begin{proof}[Proof of \pref{le:dgeab}]
    1. Since $y\mapsto D'(y)$ is a strictly decreasing convex function and $D'''(y)>0$ for any $y>0$, $|D''(y)|< \frac{D'(0)-D'(y)}y $.  By assumption,
    \[
    (\al \rho^2)^2 = \frac{4D''(\rho^2)^2 \rho^4}{D(\rho^2)-\frac{\rho^2 D'(\rho^2)^2}{D'(0)}} \le -\frac{8 D''(\rho^2)^2 D''(0)}{ 3[D'(\rho^2)-D'(0)]^2/\rho^4}< -\frac83 D''(0).
    \]
    It follows that
    \begin{align*}
    (\al \rho^2+\bt)\bt &< \sqrt{-\frac23 D''(0)} \sqrt{-\frac83 D''(0)}  -\frac23 D''(0) =-2D''(0),\\
    (\al \rho^2+\bt)\al \rho^2 &< -\frac83 D''(0)+ \sqrt{-\frac23 D''(0)} \sqrt{-\frac83 D''(0)}=-4D''(0).
    \end{align*}

    2. We verify \pref{eq:btbd}. If \pref{eq:btinc} holds, then $y\mapsto \bt(y)^2$ is a decreasing function and \pref{eq:btbd} follows from \pref{le:albtd}.

    3. By item 1, it suffices to check \pref{eq:btbd}. Consider the function
    \[
    f(y)=-D''(0)[D'(0)D(y)-D'(y)^2y]-\frac32D'(0)[D'(y)-D'(0)]^2.
    \]
    Condition \pref{eq:btbd} is equivalent to $f(y)\ge0$. Note that $f(0)=0$ and that
    \[
    f'(y)= [D'(0)-D'(y)][D'(0)D''(y)-D''(0)D'(y)]+2D''(y)(D''(0)D'(y)y -D'(0)[D'(y)-D'(0)]).
    \]
    By convexity, $  \frac{D'(y)-D'(0)}y \le  D''(y)\le 0$. If \pref{eq:btbd2} holds, $D''(0)D'(y)y -D'(0)[D'(y)-D'(0)]\le 0$ and
    \[
    \frac{D'(y)}{D'(0)}-\frac{D''(y)}{D''(0)}\ge 0.
    \]
    Then \pref{eq:btbd} follows from here since $D'(0)\ge D'(y)$ and we have $f'(y)\ge0$.

    4. By Cauchy's mean value theorem, condition \pref{eq:btbd2} is equivalent to \pref{eq:btbd3}.

    5. Direct calculation yields
    \[
    \frac{\dd}{\dd y} \frac{D'(y)}{- D''(y)} =\frac{-D''(y)^2+D'''(y)D'(y)}{D''(y)^2}.
    \]
    Then \pref{eq:btbd4} implies \pref{eq:btbd3}.

    6. By the representation \pref{eq:tbfcn} of Thorin--Bernstein functions, we have
    \begin{align*}
    D''(x)= -\int_{(0,\8)}  \frac1{(x+t)^2} \si(\dd t), \qquad D'''(x)= \int_{(0,\8)}  \frac{2}{(x+t)^3} \si(\dd t).
    \end{align*}
    By the Cauchy--Schwarz inequality, we have
    \begin{align*}
    2D''(x)^2\le D'(x)D'''(x).
    \end{align*}
    It follows that $\frac{\dd}{\dd y} \frac{D'(y)}{- D''(y)} \ge1$ and \pref{eq:btbd4} holds. 
    \end{proof}

    If $A=0$ in the representation \pref{eq:drep}, using the Cauchy--Schwarz inequality, we can see
    \[
    \frac{\dd}{\dd y} \frac{D'(y)}{- D''(y)} =\frac{-D''(y)^2+D'''(y)D'(y)}{D''(y)^2}\ge 0,
    \]
    compared with \pref{eq:btbd4}. It is easy to check that for any $\eps>0, 0<\ga<1$, our major examples $D(r)=\log(1+r/\eps)$ and $D(r)=(r+\eps)^\ga-\eps^{\ga}$ satisfy \pref{eq:btbd3}. With more work, one can check that these functions satisfy \pref{eq:btinc}.

    On the other hand, according to \cite{SSV}*{p.~332},
    $$D(x)=\frac{\sqrt{x}\sinh^2(\sqrt x)}{\sinh(2\sqrt x)}$$
    is  a complete  Bernstein function which is not Thorin--Bernstein.  One can check (at least numerically) that it violates \pref{eq:btbd3} but still verifies \pref{eq:btbd}.
    We suspect that \pref{eq:asmp1} and \pref{eq:asmp2} always hold for any structure function $D$. The following shows that this is the case at least in a neighborhood of $0$.

    \begin{lemma}
      Assume $A=0$ in \pref{eq:drep}. We have
      \begin{align*}
        \lim_{y\to 0+} \frac{\dd }{\dd y} [\al(y) y+\bt(y)]\bt(y) & <0, \\
        \lim_{y\to 0+} \frac{\dd }{\dd y} [\al(y) y+\bt(y)]\al(y)y & <0.
      \end{align*}
      Consequently, there exists $\de>0$ such that $-2D''(0)>[\al(y) y+\bt(y)]\bt(y)$ and $-4D''(0)>[\al(y) y+\bt(y)]\al(y)y$ for $y\in(0,\de)$.
    \end{lemma}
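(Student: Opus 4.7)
The plan is as follows. Let me write $F_1(y) = [\alpha(y)y + \beta(y)]\beta(y)$ and $F_2(y) = [\alpha(y)y + \beta(y)]\alpha(y)y$. By \pref{le:albtd} we already have $\lim_{y\to 0+}F_1(y)=-2D''(0)$ and $\lim_{y\to 0+}F_2(y)=-4D''(0)$. The ``consequently'' statement will then follow immediately from continuity once I show the one-sided derivatives at $0+$ are strictly negative. So the whole proof reduces to computing these two derivatives and checking their signs.

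To compute them, I would use the closed-form expressions
\[
F_1(y)=\frac{[2D''(y)y+D'(y)-D'(0)][D'(y)-D'(0)]}{W(y)},\qquad F_2(y)=\frac{[2D''(y)y+D'(y)-D'(0)]\cdot 2D''(y)y}{W(y)},
\]
where $W(y)=D(y)-D'(y)^2 y/D'(0)$. Numerator and denominator each vanish to order $y^2$, so I would Taylor expand everything through order $y^3$ using $D(y)=D'(0)y+\tfrac12 D''(0)y^2+\tfrac16 D'''(0)y^3+O(y^4)$ and its derivatives. A careful but routine calculation gives
\[
W(y)=-\tfrac{3}{2}D''(0)\,y^{2}-\Bigl[\tfrac{5}{6}D'''(0)+\tfrac{D''(0)^{2}}{D'(0)}\Bigr]y^{3}+O(y^{4}),
\]
and the numerators of $F_1,F_2$ become $3D''(0)^2 y^2+4 D''(0)D'''(0)\,y^3+O(y^4)$ and $6D''(0)^2 y^2+11 D''(0)D'''(0)\,y^3+O(y^4)$ respectively. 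Dividing these expansions yields
\[
F_{1}(y)=-2D''(0)+\Bigl(\tfrac{4D''(0)^{2}}{3D'(0)}-\tfrac{14D'''(0)}{9}\Bigr)y+O(y^{2}),\quad F_{2}(y)=-4D''(0)+\Bigl(\tfrac{8D''(0)^{2}}{3D'(0)}-\tfrac{46D'''(0)}{9}\Bigr)y+O(y^{2}).
\]
Hence $\lim_{y\to 0+}F_1'(y)<0$ reduces to $6D''(0)^2<7D'(0)D'''(0)$ and $\lim_{y\to 0+}F_2'(y)<0$ reduces to $12D''(0)^2<23D'(0)D'''(0)$.

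The crucial input here is the hypothesis $A=0$ in the representation \pref{eq:drep}, which lets me write $D(r)=\int_{(0,\infty)}(1-e^{-rt^{2}})\nu(\mathrm dt)$ with $\nu$ satisfying $\int t^{2j}\nu(\mathrm dt)<\infty$ for $j=1,2,3$ by Assumption I. Differentiating under the integral at $r=0$ gives $D'(0)=\int t^{2}\nu$, $-D''(0)=\int t^{4}\nu$, $D'''(0)=\int t^{6}\nu$. Applying Cauchy--Schwarz with the pair $(t,t^{3})$ in $L^{2}(\nu)$ yields $D''(0)^{2}\le D'(0)D'''(0)$. Since $D'''(0)>0$ by Assumption I, the strict inequalities $6D''(0)^{2}\le 6D'(0)D'''(0)<7D'(0)D'''(0)$ and $12D''(0)^{2}<23D'(0)D'''(0)$ hold automatically, establishing both derivative bounds.

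Two remarks on where caution is needed. First, one must justify differentiation under the integral sign at $r=0$; Assumption I together with $0<|D^{(4)}(0)|<\infty$ ensures $\int t^{2j}\nu<\infty$ for $j\le 3$ (and even $j=4$), so the dominated convergence theorem applies. Second, the Taylor computation is the main bookkeeping obstacle: getting the $y^3$ coefficients of $W$ and of the numerators right is delicate because of cancellations at the $y^2$ level. I would organize the computation by first isolating the leading quadratic $-\tfrac32 D''(0)$ from $W/y^2$ and the leading $3D''(0)^2$, $6D''(0)^2$ from the two numerators (which already reproduce the limiting values $-2D''(0)$ and $-4D''(0)$, a useful consistency check), and then expand each factor to one higher order separately before multiplying. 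Once those expansions are in hand, the sign check via Cauchy--Schwarz is immediate and the lemma follows.
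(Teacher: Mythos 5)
Your proposal is correct and follows essentially the same route as the paper: both arguments amount to extracting the first-order Taylor coefficient of $(\al y+\bt)\bt$ and $(\al y+\bt)\al y$ at $y=0+$ (the paper organizes this via the quotient rule on $T/B$ with the limits of $T$, $T'$, $B$, $B'$, which reproduces exactly your coefficients $\tfrac{4D''(0)^2}{3D'(0)}-\tfrac{14}{9}D'''(0)$, etc.), and both close with the identical Cauchy--Schwarz inequality $D''(0)^2\le D'(0)D'''(0)$ obtained from the moments $\int t^{2j}\nu(\dd t)$ of the representing measure. Your expansions of $W$ and of the two numerators check out against the paper's limits, so no gap.
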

    \begin{proof}
      We only prove the first inequality as the second is similar. Write
      \[
      (\al y+\bt)\bt = \frac{[2D''(y)+\frac{D'(y)-D'(0)}{y}] \frac{D'(y)-D'(0)}{y}}{\frac{D(y)}{y^2}-\frac{D'(y)^2}{D'(0)y}} =:\frac{T}{B}.
      \]
      Since $[(\al y+\bt)\bt]'=\frac{T'B-B'T}{B^2}$ and $\lim_{y\to 0+} B=-\frac32 D''(0) \neq 0$, it suffices to show that $\lim_{y\to 0+}T'B-B'T<0$. By calculation, we have $\lim_{y\to0+} T= 3D''(0)^2$ and
      \begin{align*}
        T'&= [2D'''(y) +\frac{D''(y)y-D'(y) +D'(0)}{y^2}] \frac{D'(y)-D'(0)}{y}\\
        &\ \ +[2D''(y)+\frac{D'(y)-D'(0)}{y}] \frac{D''(y)y-D'(y) +D'(0)}{y^2},\\
        B'&=\frac{D'(0)D'(y)y-2D'(0)D(y)-2D'(y)D''(y)y^2+y D'(y)^2}{D'(0) y^3}.
      \end{align*}
      After some tedious computation, we find $\lim_{y\to0+} T'=4D'''(0) D''(0)$ and $\lim_{y\to0+} B'=-\frac56 D'''(0)-\frac{D''(0)^2}{D'(0)}$. Then
      \[
      \lim_{y\to 0+} T'B-B'T = D''(0)^2 \Big[\frac{3D''(0)^2}{D'(0)}-\frac72 D'''(0)\Big].
      \]
      By the Cauchy--Schwarz inequality,
      \[
      D''(0)^2 = \Big(\int_0^\8 t^4 \nu(\dd t) \Big)^2\le \int_0^\8 t^2 \nu(\dd t) \int_0^\8 t^6 \nu(\dd t)=D'(0)D'''(0).
      \]
      From here the conclusion follows.
    \end{proof}

\end{appendix}

\bibliographystyle{plain}
\bibliography{gficrev}

\end{document}